%%%%%%%%%%%%%%%%%%%%%%%%%%%%%%%%%%%%%%%%%%%%%%%%%%%%%%%%%%
%  Fomin-Kirillov
% %%%%%%%%%%%%%%%%%%%%%%%%%%%%%%%%%%%%%%%%%%%%%%%%%%%%%%%%%%%%
% %%%%%%%%%%%%%%%%%%%%%%%%%%%%%%%%%%%%%%%%%%%%%%%%%%%%%%%%%%%%
%
%
%
%
\documentclass{amsart}
\usepackage{latexsym,amsxtra,amscd,ifthen,amsmath,color,url}
\usepackage[normalem]{ulem}
\usepackage[usenames,dvipsnames]{xcolor} \usepackage{amsfonts}
\usepackage{verbatim} \usepackage{amsmath} \usepackage{amsthm}
\usepackage{amssymb}
\usepackage[linkcolor=red,citecolor=blue,urlcolor=blue]{hyperref}
\usepackage[notcite,notref, final]{showkeys}

\usepackage{pgf} \usepackage{tikz} 
\usetikzlibrary{shapes,shapes.geometric,arrows,fit,calc,positioning,automata,}
\usepackage[latin1]{inputenc} \usepackage{verbatim}

\usepackage[all]{xy}
\input xy
\xyoption{all}

\numberwithin{equation}{section} \theoremstyle{plain}
\newtheorem{theorem}{Theorem}[section]
\newtheorem{lemma}[theorem]{Lemma}

\newtheorem{proposition}[theorem]{Proposition}
\newtheorem{corollary}[theorem]{Corollary}
\newtheorem{conjecture}[theorem]{Conjecture}
\newtheorem{definitionlemma}[theorem]{Definition-Lemma}
 \theoremstyle{definition}
\newtheorem{definition}[theorem]{Definition}

\newtheorem{remark}[theorem]{Remark}
\newtheorem{question}[theorem]{Question}
\newtheorem{notation}[theorem]{Notation}

\mathchardef\mhyphen="2D

\makeatletter              % This sequence of commands will
\let\c@equation\c@theorem  % incorporate equation numbering
                            % into theorem numbering scheme
                            % 
\makeatother

\DeclareMathOperator{\GKdim}{GKdim}

\DeclareMathOperator{\Ext}{Ext}

 \DeclareMathOperator{\Spec}{Spec}

\DeclareMathOperator{\Hom}{Hom}

\DeclareMathOperator{\depth}{depth}

\newcommand{\CC}{\mathcal C}
\newcommand{\DD}{\mathcal D}

\newcommand{\EE}{\mathcal E}

\newcommand{\kk}{\Bbbk}

\begin{document}

\title[On the quadratic dual of the Fomin-Kirillov algebras]
{On the quadratic dual of the \\Fomin-Kirillov algebras}
\author{Chelsea Walton and James J. Zhang}

\address{Walton: Department of Mathematics, 
The University of Illinois at Urbana-Champaign, 
Urbana, IL 61801 USA}

\email{notlaw@illinois.edu}

\address{Zhang: Department of Mathematics, Box 354350, University of
Washington, Seattle, WA 98195, USA}

\email{zhang@math.washington.edu}

\begin{abstract} 
We study ring-theoretic and homological properties of the quadratic 
dual (or Koszul dual) $\mathcal{E}_n^!$ of the Fomin-Kirillov 
algebras $\mathcal{E}_n$; these algebras are connected 
$\mathbb{N}$-graded and  are defined for  $n \geq 2$. We 
establish that the algebra $\mathcal{E}_n^!$ is module-finite over its center (so, satisfies a polynomial identity), 
is Noetherian, and has Gelfand-Kirillov 
dimension $\lfloor n/2 \rfloor$ for each $n \geq 2$. We also 
observe that $\mathcal{E}_n^!$ is not prime  for  $n \geq 3$. 
By a result of Roos, $\mathcal{E}_n$ is not Koszul for 
$n \geq 3$, so neither is $\mathcal{E}_n^!$ for $n \geq 3$. 
Nevertheless, we prove that $\mathcal{E}_n^!$ is 
Artin-Schelter (AS-)regular if and only if $n=2$, and that 
$\mathcal{E}_n^!$  is both AS-Gorenstein and AS-Cohen-Macaulay 
if and only if $n=2,3$.  We also show that the depth of 
$\mathcal{E}_n^!$ is $\leq 1$ for each $n \geq 2$, 
conjecture we have equality, and show 
this claim holds for $n =2,3$. Several other directions for 
further examination of $\mathcal{E}_n^!$ are suggested at the 
end of this article.
\end{abstract} 

\subjclass[2010]{16W50, 16P40, 16P90, 16E65}

\keywords{Fomin-Kirillov algebra, Gelfand-Kirillov dimension, 
homological conditions, quadratic (Koszul) dual, Noetherian, depth}
\maketitle

\setcounter{section}{-1}

%\tableofcontents

\section{Introduction} 
\label{xxsec0}

Throughout this work, we let $\kk$ denote an algebraically closed 
field of characteristic 
zero, and we consider all algebraic structures to be $\kk$-linear.

\smallskip
In 1999, Sergey Fomin and Anatol Kirillov introduced a family of quadratic 
algebras for the study of the quantum and the ordinary cohomology of flag 
manifolds in \cite{FK}. Since then these algebras, now referred to as
{\it Fomin-Kirillov algebras}~$\EE_n$ [Definition~\ref{xxdef0.1}], have been studied 
extensively by many authors with connections to algebraic combinatorics \cite{BLM,GR,MPP,Po},  algebraic geometry and cohomology theory \cite{KM,Len}, Hopf algebras and Nichols algebras \cite{AG, FP, MS},  noncommutative geometry \cite{Ma1,Ma2}, and number theory \cite{MT},
 and generalizations of these algebras have been examined in several works \cite{Baz,La,MS}. The cohomology algebras  $\Ext^{\ast}_{\EE_n}(\Bbbk,\Bbbk)$ of the Fomin-Kirillov algebras have also been of interest: They are related to the cohomology ring of quantum shuffle algebras in \cite{ETW} for example, and is studied in detail by \c{S}tefan and Vay \cite{SV} when $n=3$.  Moreover, several questions asked by Fomin and Kirillov in their 1999 work are still open-- most notably, it is unknown whether $\EE_n$ is finite dimensional for 
$n\geq 6$ \cite[Problem~2.2]{FK}. (The algebra $\EE_n$ is finite-dimensional  when $n =2,3,4,5$ \cite[~(2.8)]{FK}.) Many of these questions can be recast in terms of  $\Ext^{\ast}_{\EE_n}(\Bbbk,\Bbbk)$-- for instance, the regularity of $\Ext^{\ast}_{\EE_n}(\Bbbk,\Bbbk)$  as an $A_{\infty}$-algebra is in a certain sense equivalent to the finite-dimensionality of $\EE_n$ \cite{LPWZ}. 

\smallskip
With the aim of providing insight into the structure of both $\EE_n$ 
and its cohomology algebra we examine in this article the subalgebra 
of $\Ext^{\ast}_{\EE_n}(\Bbbk,\Bbbk)$ generated by 
$\Ext^1_{\EE_n}(\Bbbk,\Bbbk)$. Namely, we study the structure of 
the {\it quadratic dual} (or {\it Koszul dual}) $\EE_n^!$ of 
$\EE_n$, also known as the {\it diagonal subalgebra} 
$\bigoplus \Ext^{i,i}_{\EE_n}(\Bbbk,\Bbbk)$ of $\Ext^{\ast}_{\EE_n}(\Bbbk,\Bbbk)$. 

\bigskip
\noindent {\bf Note}. Due to the scope of fields in which the 
Fomin-Kirillov algebras appear, the exposition of this article 
is intended for an audience broader than experts in 
noncommutative graded algebras. As discussed below, much of 
the theory of commutative local algebras has been generalized 
to the noncommutative setting by working with algebras that 
are connected graded. An $\mathbb{N}$-graded $\kk$-algebra 
$A = \bigoplus_{i \in \mathbb{N}}A_i$ is, 
by definition, {\it connected} if $A_0 = \kk$.
\medskip

\smallskip
To begin, the presentations of $\EE_n$ and of its quadratic 
dual $\EE_n^!$ are provided below.

\begin{definition}[$\EE_n$, $x_{i,j}$]  \cite[Definition 2.1]{FK}
\label{xxdef0.1}
For $n\geq 2$, the {\it Fomin-Kirillov algebra},
 $\EE_n$, is an associative $\kk$-algebra generated by 
indeterminates $\{x_{i,j}\mid 1\leq i<j\leq n\}$ of degree one, subject 
to the following quadratic relations:
$$
\begin{array}{ll}
x_{i,j}^2=0 
& \quad i<j,\\
x_{i,j}x_{j,k}-x_{j,k}x_{i,k}-x_{i,k}x_{i,j}=0 
& \quad i<j<k,\\
x_{j,k}x_{i,j}-x_{i,k}x_{j,k}-x_{i,j}x_{i,k}=0 
&\quad  i<j<k,\\
x_{i,j}x_{k,l}-x_{k,l}x_{i,j}=0 
& \quad \{i,j\}\cap \{k,l\}=\emptyset,~ i<j, ~k<l.
\end{array}
$$
\end{definition}

The presentation of $\EE_n^{!}$ is obtained in a 
straight-forward manner from the definition above 
and is recorded below. For a graded quadratic 
$\kk$-algebra $A = T(V)/(R)$ with $T(V)$ the tensor 
algebra on a $\kk$-vector space $V$ and $(R)$ the 
two-sided ideal generated by a space $R \subset V\otimes V$,  
the {\it quadratic dual} $A^!$ of $A$ is $T(V^*)/(R^\perp)$ 
where $V^*$ is the linear dual of $V$ and $R^\perp$ is the 
orthogonal complement of $R$ in $V \otimes V$.

\begin{definitionlemma}[$\EE_n^!$, $y_{i,j}$] \label{xxdeflem0.2} 
The quadratic dual $\EE_n^{!}$ of $\EE_n$ is an associative 
$\kk$-algebra generated by the indeterminates
$\{y_{i,j}:=x_{i,j}^{\ast}\mid 1\leq i<j\leq n\}$ of degree one, 
subject to the following  quadratic relations:
$$\begin{array}{ll}
y_{i,j}y_{j,k}+y_{j,k}y_{i,k}=0 \qquad & \text{$i,j,k$ distinct, 
where $y_{j,i}=-y_{i,j}$ for $i<j$};\\
y_{i,j}y_{k,l}+y_{k,l}y_{i,j}=0 \qquad 
& \{i,j\}\cap \{k,l\}=\emptyset,~ i<j, ~ k<l.
\end{array} 
$$

\vspace{-.2in}

 \qed
\end{definitionlemma}

Our first result pertains to ring-theoretic results on  $\EE_n^{!}$: 
one on the {\it polynomial identity (PI)} property 
\textnormal{[Definition~\ref{xxdef1.6}, Remark~\ref{xxrem1.7}]}, 
i.e., a condition for a ring being ``measurably" close to being 
commutative (e.g., via its {\it PI degree}); and another on the 
{\it Gelfand-Kirillov (GK) dimension} 
\textnormal{[Definition~\ref{xxdef1.1}, Remark~\ref{xxrem1.2}]}, 
i.e., a growth measure on a ring that serves as a noncommutative 
version of Krull dimension.

\begin{theorem}
\label{xxthm0.3} 
The algebra $\EE_n^{!}$ satisfies the following properties.
\begin{enumerate}
\item[(1)]  $\EE_n^{!}$ is Noetherian and 
is module-finite over its center (so, satisfies a polynomial 
identity) when $n \geq 2$. 
\item[(2)]  $\EE_n^{!}$ has Gelfand-Kirillov dimension 
$\lfloor n/2\rfloor$ when $n \geq 2$.
\item[(3)] $\EE_n^{!}$ is not prime when $n \geq 3$.
\end{enumerate}
\end{theorem}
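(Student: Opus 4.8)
The plan is to understand the defining relations of $\EE_n^!$ well enough to extract explicit structural information — especially about the center and about finite generation as a module over a commutative subring — and then to read off each of the three assertions. The relations have a clean antisymmetric flavor: writing $y_{j,i} = -y_{i,j}$, the relations $y_{i,j}y_{j,k} + y_{j,k}y_{i,k} = 0$ for distinct $i,j,k$ together with the commutation $y_{i,j}y_{k,l} + y_{k,l}y_{i,j} = 0$ for disjoint index pairs tell us that disjoint generators \emph{anticommute}, while overlapping generators satisfy a cyclic-type relation. A natural first step is to compute, in low degree, the squares $y_{i,j}^2$ and the products among a triangle $\{y_{i,j}, y_{j,k}, y_{i,k}\}$, looking for central elements. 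Since disjoint pairs anticommute, a product like $y_{i,j}^2$ or a symmetric expression in the $y$'s is a strong candidate for a central element, and these are exactly the kind of elements one expects to generate the center over which the algebra is module-finite.

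\textbf{Proof of (1) (module-finiteness and Noetherianity).} The strategy I would pursue is to exhibit a finite-dimensional commutative subalgebra $Z \subseteq Z(\EE_n^!)$ — most plausibly generated by the squares $y_{i,j}^2$ and perhaps a few products of disjoint anticommuting generators — such that $\EE_n^!$ is finitely generated as a module over the central subalgebra generated by $Z$. Concretely, I would first check that each $y_{i,j}^2$ is central (this should follow from the anticommutation relations for disjoint pairs together with a short calculation using the triangle relations for overlapping pairs). Granting that, one obtains a filtration/spanning argument: modulo the ideal generated by the central squares, every generator squares to zero, so monomials that are products of distinct generators span the quotient, and there are only finitely many such monomials up to the central elements. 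This gives a finite module-generating set over the commutative central subalgebra, hence module-finiteness over the center, and therefore the PI property. Noetherianity then follows from the standard fact that a ring module-finite over a commutative Noetherian (indeed finitely generated, hence Noetherian) central subalgebra is itself Noetherian (Eakin--Nagata / Artin--Tate style argument).

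\textbf{Proof of (3) (non-primeness for $n \geq 3$).} To show $\EE_n^!$ is not prime for $n \geq 3$, the cleanest route is to produce two nonzero elements $a,b$ with $a\,\EE_n^!\,b = 0$, or equivalently to exhibit a nonzero two-sided ideal that is nilpotent or that squares into a proper annihilator. I expect a central square such as $z = y_{i,j}^2$ to be a nonzero nilpotent central element: if $z^2 = y_{i,j}^4 = 0$ while $z \neq 0$, then the principal ideal $(z)$ satisfies $(z)^2 = 0$ (using centrality of $z$), so $\EE_n^!$ has a nonzero nilpotent ideal and is therefore not prime (not even semiprime). The main thing to verify is that for $n \geq 3$ such a $z$ is genuinely nonzero in $\EE_n^!$; in degree two this is a finite linear-algebra check against the orthogonal complement $R^\perp$ defining the relations, and I would confirm it by computing the degree-two component directly from the presentation in Definition-Lemma~\ref{xxdeflem0.2}. (The fact that primeness fails precisely at $n \geq 3$, while $\EE_2^!$ — which will turn out to be AS-regular — is a domain, is consistent with the $n=2$ algebra having only a single generator and no room for such a nilpotent.)

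\textbf{On part (2) and the main obstacle.} For the GK-dimension statement GKdim $\EE_n^! = \lfloor n/2\rfloor$, module-finiteness over the center from part (1) reduces the computation to the GK-dimension of the commutative central subalgebra (GK-dimension is preserved under finite module extensions), so the task becomes identifying the Krull dimension of that center. I anticipate the center to be a polynomial-type ring in the central squares modulo the nilpotents, with the \emph{count} of algebraically independent central squares equal to $\lfloor n/2\rfloor$ — reflecting that one can choose at most $\lfloor n/2\rfloor$ pairwise-disjoint index pairs from $\{1,\dots,n\}$, and disjoint squares commute freely while overlapping ones are tied together by the triangle relations. The hard part, and the step I would budget the most care for, is precisely this combinatorial-homological determination of the center: proving that the central squares indexed by a maximum matching on $\{1,\dots,n\}$ are algebraically independent (giving the lower bound on dimension) and that no larger independent family exists (the upper bound, where the overlapping relations must be shown to force algebraic dependence). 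Establishing module-finiteness with an \emph{effective} description of the center in part (1) is therefore the crux on which both (2) and (3) rest.
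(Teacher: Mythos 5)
There is a genuine gap, and it is in your part (3). Your route hinges on $z=y_{i,j}^2$ being a nonzero \emph{nilpotent} central element with $z^2=y_{i,j}^4=0$, but this is false: dualizing the relation $x_{i,j}^2=0$ of $\EE_n$ means precisely that no relation $y_{i,j}^2=0$ appears in $\EE_n^!$, and in fact \emph{every} monomial of $\EE_n^!$ is nonzero (the paper's Corollary~\ref{xxcor2.11}); moreover the central subalgebra $\CC_n$ generated by the squares $a_{i,j}:=y_{i,j}^2$ is reduced, being isomorphic to $\DD_n$ (Propositions~\ref{xxpro2.8} and~\ref{xxpro2.10}), so no polynomial in the central squares is nilpotent. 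A quick sanity check: if $y_{i,j}^4$ vanished for all pairs, $\CC_n$ would be spanned by squarefree monomials, forcing $\EE_n^!$ to be finite-dimensional, contradicting $\GKdim \EE_n^!=\lfloor n/2\rfloor\geq 1$. Worse for your strategy, whether $\EE_n^!$ is even semiprime is posed as an \emph{open question} in the paper (Question~\ref{xxque0.4}(1)), and the authors' depth conjecture would follow from a positive answer — so exhibiting a nilpotent ideal is not merely unverified but suspected impossible. The paper's actual argument is different and does not need nilpotents: Lemma~\ref{xxlem1.4} gives $a_{i,j}y_{j,k}=y_{j,k}a_{i,k}$, hence $y_{j,k}(a_{i,j}-a_{i,k})=0$ with $a_{i,j}-a_{i,k}$ central; since both factors are nonzero (Diamond Lemma), $y_{j,k}\,\EE_n^!\,(a_{i,j}-a_{i,k})=0$, so $\EE_n^!$ is not prime. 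Your degree-two linear-algebra check verifies the wrong thing — nonzeroness of $y_{i,j}^2$ is true but irrelevant; the needed identity $y_{i,j}^4=0$ lives in degree four and fails.

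Parts (1) and (2) follow the paper's approach in outline but are incomplete at two points. In (1), the step ``modulo the central squares every generator squares to zero, so products of distinct generators span'' does not by itself bound word length: squares vanishing does not preclude arbitrarily long alternating words such as $y_{1,2}y_{1,3}y_{1,2}y_{1,3}\cdots$. You need the straightening fact (the paper's Lemma~\ref{xxlem1.3}, via Bergman's Diamond Lemma) that any out-of-order product $y_{k,l}y_{i,j}$ rewrites to lower-order terms using the triangle relations, so that the ordered monomials $y_{1,2}^{r_{1,2}}\cdots y_{n-1,n}^{r_{n-1,n}}$ span; only then do the squarefree ordered monomials generate $\EE_n^!$ finitely over $\CC_n$ (and note $\CC_n$ is finitely generated, not finite-dimensional as you wrote). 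In (2), your matching-based lower bound matches the paper's (they pass to the quotient $\kk_{-1}[y_{1,2},y_{3,4},\dots,y_{2m-1,2m}]$), but you explicitly defer the upper bound, which is the real content. The paper settles it without identifying $Z(\EE_n^!)$ (whose presentation is left open as Question~\ref{xxque4.2}): it suffices to bound $\GKdim\DD_n\leq\lfloor n/2\rfloor$, using the relations $a_{i,j}a_{j,k}=a_{i,j}a_{i,k}$ and the derived identity $a_{i,j}^2a_{i,k}=a_{i,j}a_{i,k}^2$ of Lemma~\ref{xxlem1.8}(3), together with a Diamond-lemma normal form showing that in any reduced monomial the pairs carrying exponent $\geq 2$ must be pairwise disjoint — so at most $\lfloor n/2\rfloor$ variables can appear to high powers. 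Your instinct that ``overlapping relations force algebraic dependence'' is the right idea, but as stated it is a plan, not a proof, and your expectation of killing nilpotents in the center is again off since $\CC_n$ is reduced.
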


For Theorem~\ref{xxthm0.3}(3), recall that a ring $A$ is {\it prime} if for all 
nonzero elements $a,b$ we get $aAb \neq 0$ (a weaker version of the domain property), 
and a ring is {\it semiprime} if it contains no nilpotent ideals 
(a weaker version of the prime property). Semiprime rings are still quite 
useful in their own right (see, e.g., \cite[Chapter~6]{GW}). So, we ask:

\begin{question} \label{xxque0.4}
(1) Are the algebras $\EE_n^{!}$ semiprime?

(2) What is the PI degree of $\EE_n^{!}$?
\end{question}

In any case, we obtain the following immediate results on the cohomology algebra  
$\Ext^{\ast}_{\EE_n}(\Bbbk,\Bbbk)$  since $\EE_n^!$ is a subalgebra of  
$\Ext^{\ast}_{\EE_n}(\Bbbk,\Bbbk)$.

\begin{corollary} 
\label{xxcor0.5} 
We have that $\GKdim(\Ext^{\ast}_{\EE_n}(\Bbbk,\Bbbk))$ 
$\geq \lfloor n/2\rfloor$ when $n \geq 2$, and  
that $\Ext^{\ast}_{\EE_n}(\Bbbk,\Bbbk)$ is not a domain 
when $n \geq 3$. \qed
\end{corollary}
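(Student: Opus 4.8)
The plan is to deduce both assertions directly from Theorem~\ref{xxthm0.3}, together with the structural fact recalled in the text just before the corollary: the quadratic dual $\EE_n^!$ is isomorphic to the diagonal (Yoneda) subalgebra $\bigoplus_i \Ext^{i,i}_{\EE_n}(\Bbbk,\Bbbk)$ of the full cohomology algebra, and hence is a graded subalgebra of $\Ext^{\ast}_{\EE_n}(\Bbbk,\Bbbk)$. So throughout I regard $\EE_n^!$ as sitting inside $\Ext^{\ast}_{\EE_n}(\Bbbk,\Bbbk)$.

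For the Gelfand--Kirillov bound, I would invoke the standard monotonicity of GK dimension under passage to subalgebras: whenever $A$ is a subalgebra of $B$ one has $\GKdim A \leq \GKdim B$ (see, e.g., Krause--Lenagan). Taking $A = \EE_n^!$ and $B = \Ext^{\ast}_{\EE_n}(\Bbbk,\Bbbk)$, and then substituting the value $\GKdim \EE_n^! = \lfloor n/2 \rfloor$ supplied by Theorem~\ref{xxthm0.3}(2), yields $\GKdim \Ext^{\ast}_{\EE_n}(\Bbbk,\Bbbk) \geq \lfloor n/2 \rfloor$ for all $n \geq 2$.

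For the second assertion, I would first note that every domain is prime, so Theorem~\ref{xxthm0.3}(3)---which states that $\EE_n^!$ is not prime for $n \geq 3$---already forces $\EE_n^!$ to fail to be a domain. Concretely, non-primeness produces nonzero elements $a,b \in \EE_n^!$ with $a \EE_n^! b = 0$; evaluating at the identity $1 \in \EE_n^!$ gives $ab = 0$ with $a, b \neq 0$, so $a$ and $b$ are genuine zero divisors in $\EE_n^!$. Since $\EE_n^!$ is a subalgebra of $\Ext^{\ast}_{\EE_n}(\Bbbk,\Bbbk)$, these same elements are nonzero zero divisors in the larger algebra, so $\Ext^{\ast}_{\EE_n}(\Bbbk,\Bbbk)$ is not a domain for $n \geq 3$.

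There is essentially no obstacle beyond verifying the two ingredients used as black boxes: the monotonicity of GK dimension under subalgebras, which holds in full generality, and the containment $\EE_n^! \hookrightarrow \Ext^{\ast}_{\EE_n}(\Bbbk,\Bbbk)$. The latter is the only point needing care, as it rests on the standard identification, for a quadratic algebra $A$, of the subalgebra of $\Ext^{\ast}_A(\Bbbk,\Bbbk)$ generated by $\Ext^1$ with the quadratic dual $A^!$. This is precisely why the corollary is immediate once Theorem~\ref{xxthm0.3} is in hand, and it is the reason the statement is recorded with no further argument.
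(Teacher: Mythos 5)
Your proposal is correct and is essentially the paper's own argument: the authors state the corollary as immediate precisely because $\EE_n^!$ embeds in $\Ext^{\ast}_{\EE_n}(\Bbbk,\Bbbk)$ as the diagonal subalgebra, so Theorem~\ref{xxthm0.3}(2) gives the lower bound via monotonicity of GK-dimension under subalgebras, and Theorem~\ref{xxthm0.3}(3) supplies nonzero zero divisors (indeed $y_{j,k}(a_{i,j}-a_{i,k})=0$ from the proof of that theorem) that persist in the larger algebra. Your extra step of extracting $ab=0$ from non-primeness by evaluating at $1$ is a fine way of making explicit what the paper leaves implicit.
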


The GKdim bound in Corollary~\ref{xxcor0.5} is not sharp: 
Indeed, $\GKdim \Ext^{\ast}_{\EE_3}(\kk,\kk)=2$ by a 
result of Stefan and Vay \cite[Theorem 4.17]{SV}. 
Nonetheless, it was shown recently by Ellenberg, Tran, and 
Westerland that the growth of $\EE_n^!$ and of 
$\Ext^{\ast}_{\EE_n}(\kk,\kk)$ are related to the growth 
of the homology of Hurwitz spaces \cite{ETW}.

\smallskip

Now we consider various homological conditions on  $\EE_n^{!}$. 
To start,  note that $\EE_{n}$ is not Koszul for $n\geq 3$, 
due to a result of Roos \cite{Ro}; thus, $\EE_{n}^!$ is not 
Koszul for $n\geq 3$. Thus, $\EE_n^!$ is a proper subalgebra 
of $\Ext^{\ast}_{\EE_n}(\Bbbk,\Bbbk)$ (see \cite[Section~1.3]{PP}). 
In any case, we examine $\EE_n^{!}$ in view of the hierarchy 
of {\it Artin-Schelter (AS)} versions of desirable homological 
properties for connected graded algebras that are not 
necessarily commutative; see, e.g., \cite[Introduction]{KKZ} 
for more details. In short, one has:
\bigskip

\begin{center}
\begin{tabular}{rl}
\hspace{-.15in}
{\it AS-regular} [Definition~\ref{xxdef3.1}(2)]  
& $\Longrightarrow$ {\it AS-Gorenstein} [Definition~\ref{xxdef3.1}(1)] \\
& $\Longrightarrow$ {\it AS-Cohen-Macaulay} [Definition~\ref{xxdef3.1}(4)].
\end{tabular}
\end{center}

\bigskip

A version of the {\it classical complete intersection}  
condition [Definition~\ref{xxdef3.2}(3)] is also related; 
see Figure~\ref{fig:hom} in Section~\ref{xxsec3} for more details. 
Our second result determines whether $\EE_n^!$ satisfies the conditions above.

\begin{theorem}
\label{xxthm0.6} We have the following statements for $\EE_n^!$ 
\textnormal{(}and for $\EE_2$, $\EE_3$\textnormal{)}.
\begin{enumerate}
\item[(1)]
$\EE_{n}^{!}$ is Artin-Schelter-regular if and only if $n=2$.
\item[(2)]
$\EE_{n}^{!}$ is Artin-Schelter-Gorenstein if and only if $n=2,3$.
\item[(3)]
$\EE_{n}^{!}$ is Artin-Schelter-Cohen-Macaulay if and only if $n=2,3$.
\item[(4)] $\EE_2^!$, $\EE_3^!$, $\EE_2$, and $\EE_3$ are classical 
completion intersections.
\end{enumerate}
\end{theorem}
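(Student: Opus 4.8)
The plan is to separate the analysis into the three regimes $n = 2$, $n = 3$, and $n \ge 4$, exploiting the implications AS-regular $\Rightarrow$ AS-Gorenstein $\Rightarrow$ AS-CM together with the bound $\mathrm{depth}\,\EE_n^! \le 1$ proved earlier. When $n = 2$ both relation families of Definition-Lemma~\ref{xxdeflem0.2} are vacuous, so $\EE_2^! \cong \kk[y_{1,2}]$ is a polynomial ring; it is therefore AS-regular, hence AS-Gorenstein and AS-CM, and it is a classical complete intersection (with empty regular sequence), while dually $\EE_2 \cong \kk[x_{1,2}]/(x_{1,2}^2)$ is a hypersurface and so a classical complete intersection in the sense of Definition~\ref{xxdef3.2}(3). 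This disposes of $n = 2$ in all four parts.

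For the negative directions I would argue down the hierarchy. By Theorem~\ref{xxthm0.3}(2) we have $\GKdim \EE_n^! = \lfloor n/2\rfloor \ge 2$ for $n \ge 4$, whereas the depth is at most $1$; since AS-Cohen-Macaulayness forces the depth to equal the GK-dimension, the strict inequality $\mathrm{depth}\,\EE_n^! \le 1 < 2 \le \lfloor n/2 \rfloor$ rules out AS-CM for $n \ge 4$, and then the implications above simultaneously rule out AS-Gorenstein and AS-regular for $n \ge 4$. The single remaining non-regularity, at $n = 3$, I would obtain from Theorem~\ref{xxthm0.3}(3): for $n \ge 3$ the algebra $\EE_n^!$ is not prime, hence not a domain, whereas a connected graded AS-regular algebra is a domain; thus $\EE_n^!$ is not AS-regular for every $n \ge 3$. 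Together these establish the ``only if'' halves of parts (1)--(3).

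The crux is the case $n = 3$, where I must prove $\EE_3^!$ is AS-Gorenstein and a classical complete intersection, after which AS-CM is automatic. Writing $a = y_{1,2}$, $b = y_{1,3}$, $c = y_{2,3}$, the relations of Definition-Lemma~\ref{xxdeflem0.2} reduce to $ab = bc = -ca$ and $ba = cb = -ac$ with $a^2, b^2, c^2$ unconstrained, so that by Theorem~\ref{xxthm0.3}(2) the algebra has GK-dimension $1$. I would then present $\EE_3^!$ as $B/(\Omega)$ for a suitable connected graded AS-regular algebra $B$ and a regular sequence $\Omega$ of normal (ideally central) homogeneous elements; this simultaneously yields the classical-complete-intersection property and, through the Koszul-type complex attached to a regular normal sequence, computes $\Ext^{\bullet}_{\EE_3^!}(\kk, \EE_3^!)$. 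Verifying that this Ext is one-dimensional in a single cohomological degree, and that the injective dimension is finite, gives the AS-Gorenstein conclusion. The analogous presentation of the finite-dimensional Frobenius algebra $\EE_3$ as an AS-regular algebra modulo a regular normal sequence supplies the remaining assertion of part (4).

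The main obstacle is exactly this $n = 3$ construction: pinning down the correct AS-regular cover $B$ and verifying that the chosen normalizing elements genuinely form a regular sequence, equivalently that the factor Hilbert series behaves as predicted and no unexpected collapse occurs when monomials are rewritten using the relations above. Once the regular-normal-sequence presentations of $\EE_3^!$ and $\EE_3$ are secured, the AS-Gorenstein property of $\EE_3^!$ follows formally, AS-CM follows from the hierarchy, and combining the positive cases $n \in \{2,3\}$ with the negative cases ($n \ge 3$ for regularity and $n \ge 4$ for the Gorenstein and Cohen-Macaulay conditions) yields the equivalences in (1)--(3); part (4) is then assembled from the complete-intersection presentations of $\EE_2$, $\EE_3$, $\EE_2^!$, and $\EE_3^!$ produced along the way.
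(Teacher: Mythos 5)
Your global architecture coincides with the paper's: $n=2$ by inspection ($\EE_2^!\cong\kk[y_{1,2}]$, $\EE_2\cong\kk[x_{1,2}]/(x_{1,2}^2)$), exclusion of $n\geq 4$ from AS-CM via $\depth\EE_n^!\leq 1 < \lfloor n/2\rfloor = \GKdim\EE_n^!$ (Theorems~\ref{xxthm3.12} and~\ref{xxthm0.3}, plus J{\o}rgensen's depth criterion, which is legitimate here because $\EE_n^!$ is Noetherian PI by Theorem~\ref{xxthm0.3}(1)), non-regularity for $n\geq 3$ from non-primeness, and the cci strategy at $n=3$. But there is a genuine gap: you explicitly defer the one step where all the work lies, namely producing the AS-regular cover and a regular normal sequence for $\EE_3^!$ (and for $\EE_3$). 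This is not a routine verification. The paper first performs the linear change of variables $a=y_{13}+y_{23}$, $b=y_{13}-y_{23}$, $c=y_{12}$ (your untransformed basis $a=y_{12},b=y_{13},c=y_{23}$ gives correct relations but does not make a cover visible), then takes $C''=\kk\langle a,b,c\rangle/(ca+ac,\,cb-bc,\,a^2b-ba^2,\,ab^2-b^2a)$, recognized as an Ore extension $B[c;\sigma]$ of the global-dimension-three AS-regular algebra $B=\kk\langle a,b\rangle/(a^2b-ba^2,\,ab^2-b^2a)$ from Artin--Schelter's list \cite{AS}, and exhibits the normal sequence $\Omega''_1=(ab+ba)c$, $\Omega''_2=-2bc+a^2-b^2$, $\Omega''_3=-2ac+(ab-ba)$ with $C''/(\Omega''_1,\Omega''_2,\Omega''_3)=\EE_3^!$ [Lemmas~\ref{xxlem3.8},~\ref{xxlem3.9}]. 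Note $\Omega''_1$ has degree $3$, and proving it becomes redundant modulo the quadratic relations is itself a computation. Regularity of the sequence is then established not by a direct rewriting argument but by adjoining a fourth normal element $\Omega''_4=\frac{1}{2}(a^2+b^2)+c^2$, computing a Gr\"obner basis of $\EE_3^!/(\Omega''_4)$ by machine to see it is finite-dimensional, and invoking Proposition~\ref{xxpro3.3}(2) with $\GKdim C''=4$. The parallel construction for $\EE_3$ uses $C'=S[x_{23};\sigma,\delta]$ over $S\cong\kk_{-1}[z_1,z_2]$ with sequence $x_{23}^2$, $x_{12}x_{23}x_{13}-x_{13}x_{23}x_{12}$, $x_{13}^2$, regularity following from $\GKdim\EE_3=0$ [Theorem~\ref{xxthm3.6}]. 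Without these constructions, parts (2)--(4) at $n=3$ remain unproven; also, deducing AS-Gorenstein from the presentation is not purely formal for a merely \emph{normal} (non-central) sequence --- the paper routes through Levasseur \cite[Theorems~3.6 and~6.3]{Lev}, which requires the cover to be Auslander-regular and Cohen-Macaulay, supplied by the Ore-extension structure.

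A second, smaller issue: your assertion that ``a connected graded AS-regular algebra is a domain'' is not a known theorem in general --- it is a well-known open question. What is true, and what the paper uses in Theorem~\ref{xxthm3.10}(3), is the Stafford--Zhang result \cite[Corollary~1.2]{SZ} that a connected graded \emph{Noetherian PI} algebra of finite global dimension is a domain; this applies here by Theorem~\ref{xxthm0.3}(1), so your non-regularity argument for $n\geq 3$ is repairable by adding that hypothesis, exactly as the paper does.
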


Since $\EE_n^!$ is a Noetherian algebra that satisfies a polynomial 
identity [Theorem~\ref{xxthm0.3}], one can measure the failure of 
the AS Cohen-Macaulay property of $\EE_n^!$ by considering its 
{\it depth}  [Definition~\ref{xxdef3.1}(11)]. Namely, the depth 
of a connected graded, Noetherian algebra $A$ that satisfies a 
polynomial identity is  bounded above by its Gelfand-Kirillov 
dimension, and we have equality of these values if and only if 
$A$ is AS-Cohen-Macaulay, due to results of J{\o}rgensen 
\cite[Theorem~2.2]{Jo2}. In fact, 
we verify that ${\mathcal E}_n^!$ has depth $\leq 1$ 
for $n \geq 2$ [Theorem~\ref{xxthm3.12}] (cf., Theorem~\ref{xxthm0.3}(2)), 
and computational evidence 
yields the following conjecture :

\begin{conjecture}
\label{xxcon0.7}
The algebras $\EE_n^!$ have depth 1 for all $n \geq 2$.
\end{conjecture}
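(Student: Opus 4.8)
The plan is to leverage the upper bound already established together with a structural analysis of the $\fm$-torsion of $A := \EE_n^!$, where $\fm = \bigoplus_{i \geq 1} A_i$ denotes the augmentation ideal and $\kk = A/\fm$. By Theorem~\ref{xxthm3.12} we have $\depth A \leq 1$, so the conjecture amounts to the single inequality $\depth A \geq 1$. Using the standard grade description of depth for connected graded algebras,
\[
\depth A \;=\; \inf\{\, i \geq 0 : \Ext^i_A(\kk, A) \neq 0 \,\},
\]
the bound $\depth A \geq 1$ is equivalent to $\Hom_A(\kk, A) = 0$, i.e. to the vanishing of the graded socle $\operatorname{Soc}(A) = \{\, a \in A : \fm a = 0 \,\}$. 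Since every nonzero $\fm$-torsion module carries a nonzero socle, this is in turn equivalent to $\HB^0_{\fm}(A) = \Gamma_{\fm}(A) = 0$. Thus the entire conjecture reduces to showing that $\EE_n^!$ has no nonzero $\fm$-torsion.

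The decisive observation is that such torsion, were it present, would be an ideal of a very rigid type. First, $\Gamma_{\fm}(A)$ is a graded two-sided ideal: it is a left submodule by definition, and $\fm^k a = 0$ forces $\fm^k(ab) = 0$ for all $b$, so it is also a right ideal. Next, since $\EE_n^!$ is generated in degree one and is Noetherian by Theorem~\ref{xxthm0.3}(1), $\Gamma_{\fm}(A)$ is finitely generated and annihilated on the left by $A_{\geq k}$ for some $k$, hence finite-dimensional and concentrated in degrees $1, \dots, d$. A finite-dimensional graded ideal $I \subseteq \fm$ satisfies $I^{m} \subseteq A_{\geq m}$ and $I^m \subseteq I$ simultaneously, so $I^{d+1} = 0$; thus $\Gamma_{\fm}(A)$ is nilpotent. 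Consequently it suffices to prove that $\EE_n^!$ is \emph{semiprime}, which is precisely Question~\ref{xxque0.4}(1): a positive answer there yields $\Gamma_{\fm}(A) = 0$ and hence $\depth A \geq 1$.

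To attack semiprimeness---or, more modestly, the absence of $\fm$-torsion---I would exploit the polynomial identity structure of Theorem~\ref{xxthm0.3}(1): $A$ is a finitely generated module over its center $Z$, a commutative affine connected graded algebra with $\dim Z = \GKdim A = \lfloor n/2 \rfloor$. Over such a $Z$ the condition $\Gamma_{\fm}(A) = 0$ is equivalent to the irrelevant ideal $\fm_Z$ of $Z$ not lying in $\operatorname{Ass}_Z(A)$, and by graded prime avoidance this is the same as producing a single homogeneous positive-degree non-zero-divisor of $A$ lying in $Z$. The natural candidates are the degree-two symmetric elements and the elements dual to the Dunkl elements that generate commutative subalgebras of $\EE_n$; concretely, one would test whether $\sum_{i<j} y_{i,j}^2$, or a sign-weighted variant, is central and regular. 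Here the disjoint-index and equal-index contributions to the commutator of $y_{a,b}$ with $\sum_{i<j} y_{i,j}^2$ cancel immediately from the anticommutation relation, so centrality hinges entirely on the terms in which $\{i,j\}$ and $\{a,b\}$ share exactly one index, which are governed by the braid relation $y_{i,j}y_{j,k} + y_{j,k}y_{i,k} = 0$.

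The hard part will be carrying out precisely this last analysis uniformly in $n$. Proving semiprimeness, or exhibiting a regular central element, requires control of the global invariants of $\EE_n^!$---its Hilbert series, its center, and its minimal primes---none of which are understood for large $n$: the braid relations generate increasingly intricate cross terms, and since $\EE_n^!$ fails to be Koszul for $n \geq 3$, the usual quadratic-duality shortcuts for computing these invariants are unavailable. For $n = 2$ the algebra is the polynomial ring $\kk[y_{1,2}]$, a domain, and for $n = 3$ the center and socle can be computed by hand, recovering the two known cases; pushing either the semiprimeness argument or the regular-element construction past $n = 3$ is the crux, and is the reason the statement remains conjectural.
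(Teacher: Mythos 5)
The statement you set out to prove is Conjecture~\ref{xxcon0.7}, which the paper itself leaves \emph{open}: the paper establishes only the upper bound $\depth \EE_n^! \leq 1$ (Theorem~\ref{xxthm3.12}), verifies equality for $n=2,3$ (via $\EE_2^! = \kk[y_{1,2}]$ and Theorem~\ref{xxthm3.10}), and records in Remark~\ref{xxrem4.1} precisely the reduction you give: if $\depth A = 0$ then $\Hom_A(\kk,A)$ is a nonzero nilpotent ideal, so semiprimeness of $\EE_n^!$ (Question~\ref{xxque0.4}(1), equally open) would yield $\depth \geq 1$. Your first two paragraphs --- the socle description of $\depth \geq 1$, the fact that $\Gamma_{\fm}(A)$ is a two-sided ideal, finite-dimensional by Noetherianity (Theorem~\ref{xxthm0.3}(1)), hence nilpotent --- are correct and amount to a slightly expanded version of Remark~\ref{xxrem4.1}; the equivalence with the existence of a homogeneous central non-zero-divisor is likewise sound (it is Lemma~\ref{xxlem3.11}(1) together with Lemma~\ref{xxlem3.11}(2) applied over the center, using module-finiteness from Theorem~\ref{xxthm0.3}(1)).

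The genuine gap is that the proposal stops exactly where the paper stops: you never prove semiprimeness and never verify that any candidate element is regular, and you concede as much in your final sentence --- so what you have is a correct reduction plus a research plan, not a proof. Two concrete points about that plan. First, the centrality analysis you defer is vacuous: each $a_{i,j}=y_{i,j}^2$ is individually central by Lemma~\ref{xxlem1.4}, so $\sum_{i<j} y_{i,j}^2$ is central with no computation; the entire difficulty is regularity, which is delicate precisely because Theorem~\ref{xxthm0.3}(3) exhibits relations $y_{j,k}(a_{i,j}-a_{i,k})=0$, so zero divisors abound among expressions in the $a_{i,j}$, and the proof of Theorem~\ref{xxthm3.12} shows $\prod_{i<j} a_{i,j}$ generates a module of GK-dimension only $1$ --- the same torsion phenomena that force $\depth \leq 1$ obstruct checking regularity of your candidate. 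Second, by your own chain of equivalences (for this module-finite-over-center algebra), the existence of a homogeneous positive-degree central regular element is \emph{equivalent} to $\depth \geq 1$; so the regular-element strategy is a restatement of the conjecture rather than a reduction of it to anything more tractable, and the attempt does not advance beyond the state of knowledge already recorded in Theorem~\ref{xxthm3.12} and Remark~\ref{xxrem4.1}.
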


Note that if Question \ref{xxque0.4}(1) has an affirmative answer,
then Conjecture \ref{xxcon0.7} holds by Theorem \ref{xxthm3.12}; 
see Remark~\ref{xxrem4.1}.

\smallskip There is a similar hierarchy of {\it Auslander} versions 
of the regularity [Definition~\ref{xxdef3.1}(8)] and the 
Gorenstein [Definition~\ref{xxdef3.1}(7)] conditions, in 
comparison with a version of the  Cohen-Macaulay
condition [Definition~\ref{xxdef3.1}(9)] considered 
frequently in noncommutative algebra; see Figure~\ref{fig:hom}. 
From Theorem~\ref{xxthm0.6} we obtain the following consequences.

\begin{corollary} 
\label{xxcor0.8}
The algebra $\EE_n^!$ is Auslander-regular if and only 
if $n=2$, and is both Auslander-Gorenstein and 
Cohen-Macaulay if and only if $n=2,3$. 
\end{corollary}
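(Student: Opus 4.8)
The plan is to derive Corollary~\ref{xxcor0.8} from Theorem~\ref{xxthm0.6} by combining the standard one-way implications between the Auslander and Artin--Schelter hierarchies with the complete intersection structure of $\EE_2^!$ and $\EE_3^!$. For the two forward (``only if'') implications I would use that, for connected graded Noetherian algebras, Auslander-regularity implies AS-regularity and Auslander-Gorensteinness implies AS-Gorensteinness (recorded in Figure~\ref{fig:hom}; cf.\ \cite{KKZ}). Hence if $\EE_n^!$ is Auslander-regular it is AS-regular, forcing $n=2$ by Theorem~\ref{xxthm0.6}(1); and if $\EE_n^!$ is Auslander-Gorenstein it is AS-Gorenstein, forcing $n\in\{2,3\}$ by Theorem~\ref{xxthm0.6}(2). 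No new computation is needed for these directions, and for the second one the Cohen--Macaulay hypothesis is not even required.

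For the reverse (``if'') implications I would argue by cases. When $n=2$, the quadratic dual collapses to the commutative polynomial ring $\EE_2^!\cong\kk[y_{1,2}]$ (the relation $x_{1,2}^2=0$ spans all of $V\otimes V$, so $R^\perp=0$), which is Auslander-regular and Cohen--Macaulay by inspection; since Auslander-regularity implies Auslander-Gorensteinness, this settles $n=2$ for both biconditionals. When $n=3$, the algebra $\EE_3^!$ is not AS-regular by Theorem~\ref{xxthm0.6}(1), hence not Auslander-regular, so it remains only to establish that it is Auslander-Gorenstein and Cohen--Macaulay. Here I would invoke Theorem~\ref{xxthm0.6}(4): as a classical complete intersection, $\EE_3^!$ is presented as $R/(\Omega)$ with $R$ a connected graded Auslander-regular algebra and $\Omega$ a regular sequence of central homogeneous elements. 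The noncommutative analogue of the classical fact that complete intersections are Gorenstein---that a quotient of an Auslander-regular algebra by a central regular sequence is Auslander-Gorenstein and Cohen--Macaulay (cf.\ \cite{KKZ})---then yields both properties at once.

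The main obstacle is the $n=3$ reverse direction. One must verify that the complete intersection presentation produced in the proof of Theorem~\ref{xxthm0.6}(4) has a base algebra $R$ that is Auslander-regular (not merely AS-regular) and that the central regular sequence and graded hypotheses are precisely those under which the preservation of Auslander-Gorensteinness and Cohen--Macaulayness is known. Since $\GKdim\EE_3^!=1$ by Theorem~\ref{xxthm0.3}(2), the ambient regular algebra $R$ is of small global dimension, so its Auslander-regularity should follow routinely; the real content is then the preservation statement under the central regular sequence, which is what upgrades the AS-level conclusions of Theorem~\ref{xxthm0.6} to their Auslander counterparts.
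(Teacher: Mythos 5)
Your proposal is correct in substance and follows the same architecture as the paper's proof: everything is reduced to Theorem~\ref{xxthm0.6} via the implications in Figure~\ref{fig:hom}, with the $n=3$ case resting on the complete-intersection presentation of $\EE_3^!$. The papers' actual proof of Corollary~\ref{xxcor0.8} is shorter only because the Auslander-level facts for $n=3$ are already recorded as Theorem~\ref{xxthm3.10}(2,3); your plan essentially re-proves Theorem~\ref{xxthm3.10}(2), and the ``obstacle'' you flag at the end is resolved in the paper precisely by Lemma~\ref{xxlem3.8}: the ambient algebra $C''=B[c;\sigma]$ is shown there to be Noetherian, Auslander-regular, and Cohen-Macaulay. (For the forward directions, the paper rules out $n\geq 4$ via the failure of AS-Cohen-Macaulayness, Theorem~\ref{xxthm0.6}(3), while you route through \cite[Theorem~6.3]{Lev} and Theorem~\ref{xxthm0.6}(1,2); both chains appear in Figure~\ref{fig:hom} and are equally valid.)

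Two details in your sketch need correcting, though neither is fatal. First, the regular sequence $\{\Omega''_1,\Omega''_2,\Omega''_3\}$ of Lemma~\ref{xxlem3.9} is normalizing, \emph{not} central: for instance $\Omega''_1 c=-c\Omega''_1$, and $\Omega''_3$ anticommutes with $a$, $b$, $c$ modulo the earlier elements. So the preservation theorem you need is Levasseur's result for normal regular sequences, \cite[Theorem~3.6]{Lev}, exactly as recorded in Figure~\ref{fig:hom}; a statement restricted to central sequences would not literally apply. Second, your heuristic that the base algebra has ``small global dimension'' because $\GKdim \EE_3^!=1$ is off: $C''$ has global dimension and GK-dimension four, and its Auslander-regularity is not a dimension-count but follows from Ore-extension results (\cite[Theorem~4.2]{Ek}, \cite[page~184]{LS}) applied to the cubic Artin-Schelter algebra $B$. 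Finally, note that for the Cohen-Macaulay conclusion at $n=2,3$ the paper takes a different and cheaper route than quotient-preservation: since $\EE_n^!$ is Noetherian PI [Theorem~\ref{xxthm0.3}(1)], the Auslander-Gorenstein property already yields Cohen-Macaulayness (this is the Noetherian-PI part of Figure~\ref{fig:hom}, via \cite{SZ, Jo1, Jo2}); your preservation-based alternative also works, as in the proof of Theorem~\ref{xxthm3.6}, but requires the additional Cohen-Macaulay hypothesis on $C''$ that Lemma~\ref{xxlem3.8} supplies.
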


The proof of Theorem~\ref{xxthm0.3} is provided in Section~\ref{xxsec1}, 
and Theorem~\ref{xxthm0.6} and Corollary~\ref{xxcor0.8} are established 
in Section~\ref{xxsec3}. Two important commutative subalgebras 
${\mathcal C}_n$ and ${\mathcal D}_n$ of ${\mathcal E}_n^!$ 
[Notation~\ref{xxnot1.5}] that play a key role in Section~\ref{xxsec1} 
are examined further in Section~\ref{xxsec2}. 
In addition to Question~\ref{xxque0.4} and 
Conjecture~\ref{xxcon0.7}, more questions and directions for further 
investigation are presented in Section~\ref{xxsec4}.

%%%%%%%%%%%%%%                        Section  1 %%%%%%%%%%%%%%%%%%%%%%%
%%%%%%%%%%%%%%%%%%%%%%%%%%%%%%%%%%%%%%%%%%%%%%%%%%%%%%%%%%%%%%%%%%%%%%%%%%
%%%%%%%%%%%%%%%%%%%%%%%%%%%%%%%%%%%%%%%%%%%%%%%%%%%%%%%%%%%%%%%%%%%%%%%%%%%%%%%%%%%%%%%%%%%%%%%%%%%%%%%%%%%%%%%
%%%%%%%%%%%%%%%%%%%%%%%%%%%%%%%%%%%%%%%%%%%%%%%%%%%%%%%%%%%%%%%%%%%%%%%%%%%%%%%%%%%%%%%%%%%%%%%%%%%%%%%%%%%%%%%
%%%%%%%%%%%%%%%%%%%%%%%%%%%%%%%%%%%%%%%%%%%%%%%%%%%%%%%%%%%%%%%%%%%%%%%%%%%%%%%%%%%%%%%%%%%%%%%%%%%%%%%%%%%%%%%

\section{Ring-theoretic preliminaries and Proof of Theorem~\ref{xxthm0.3}} 
\label{xxsec1}

Here, we recall the Gelfand-Kirillov (GK-)dimension and PI 
property of rings, along with providing examples of and remarks 
on these notions. Towards studying the GK-dimension and PI 
property of $\EE_n^!$, commutative subalgebras $\CC_n$ and 
$\DD_n$ of $\EE_n^!$ are constructed below. After preliminary 
results on $\EE_n^!$, and on its relationship to $\CC_n$ and 
$\DD_n$ are verified, we prove Theorem~\ref{xxthm0.3} at the end of the section.

\medskip
To begin, recall that an $\mathbb{N}$-graded $\kk$-algebra 
$A = \bigoplus_{i \in \mathbb{N}}A_i$  is 
{\it connected graded} (c.g.) if $A_0 = \kk$, and  is 
{\it locally-finite} if $\dim_{\kk} A_i < \infty$ for all $i$.  
Moreover, the {\it Hilbert series} of an $\mathbb{N}$-graded, 
locally-finite algebra $A = \bigoplus_{i \in \mathbb{N}} A_i$ 
is by definition, 
$$\textstyle H_A(t) = \sum_{i \in \mathbb{N}} (\dim_\kk A_i) t^i.$$

\smallskip

\begin{definition} \label{xxdef1.1} \cite{KL} \cite[Chapter~8]{MR}
The {\it Gelfand-Kirillov dimension} 
(or {\it GK-dimension}) of a connected $\mathbb{N}$-graded, locally-finite
$\kk$-algebra $A$ is defined to be
$$
\GKdim (A)=\limsup_{n\to\infty} 
\frac{\log(\sum_{i=0}^{n} \dim_{\Bbbk} A_i)}{\log(n)}.
$$
\end{definition}

\smallskip

\begin{remark} 
\label{xxrem1.2} 
\begin{enumerate}
\item[(1)] Suppose $A$ is finitely-generated. Then 
$\GKdim(A) = 0$ if and only if $\dim_\kk A < \infty$.

\smallskip

\item[(2)] 
Even though the definition of GK-dimension is technical, 
the dimension is computable in many cases. For instance, 
the GK-dimension of a polynomial ring $\kk[z_1, \dots, z_m]$ 
is $m$, and so is any noncommutative
$\kk$-algebra with a 
$\kk$-vector space basis of monomials
$$\hspace{.3in} \bigoplus \kk \; z_{i_1}^{e_{i_1}} z_{i_2}^{e_{i_2}} 
\cdots z_{i_t}^{e_{i_t}} z_1^{r_1} z_2^{r_2} \cdots z_m^{r_m},$$
where the sum runs over finitely many values of $e_{i_j}$ and 
$r_j \geq 0$. For $q \in \kk^\times$, the $q$-polynomial ring 
$$\hspace{.3in} \kk_q[z_1, \dots, z_m]
:= \kk\langle z_1, \dots, z_m\rangle/(z_i z_j - qz_j z_i)_{i<j}$$ 
has such a monomial basis (with $e_{i_j} =0$), so its GK-dimension 
is $m$. In fact, {\it polynomial growth} is the same condition 
as finite GK-dimension.

\smallskip

\item[(3)] 
The GK-dimension of a commutative finitely-generated $\kk$-algebra 
is its Krull dimension \cite[Theorem~8.2.14]{MR}.

\smallskip

\item[(4)] If $B$ is either a subalgebra or a homomorphic image 
of a $\kk$-algebra $A$, then we get that GKdim($B$)$\leq$ GKdim($A$) 
\cite[Proposition~8.2.2]{MR}. Moreover, if $A$ is module-finite over 
a $\kk$-subalgebra 
$C$, then GKdim($A$)$=$ GKdim($C$) \cite[Proposition~8.2.9(ii)]{MR}.
\end{enumerate}
\end{remark}

Towards computing the GK-dimension of $\EE_n^!$, we present a 
set of monomials whose $\kk$-span is $\EE_n^!$.

\begin{lemma}
\label{xxlem1.3} 
Order the generators $\{y_{i,j}\}_{1 \leq i < j \leq n}$ of $\EE_n^!$ such that
$y_{i,j}<y_{k,l}$ if $j<l$, or if $j=l$ and $i<k$. That is,
$$y_{1,2} < y_{1,3} < y_{2,3} < y_{1,4} < y_{2,4} < y_{3,4} 
< y_{1,5} < \cdots < y_{n-2,n} < y_{n-1,n}.$$
Then, every element in $\EE_n^{!}$ is a linear 
combination of the monomials of the form 
$$y_{1,2}^{r_{1,2}}y_{1,3}^{r_{1,3}}y_{2,3}^{r_{2,3}}y_{1,4}^{r_{1,4}}\cdots
y_{n-2,n}^{r_{n-2,n}}y_{n-1,n}^{r_{n-1,n}} \quad \text{ for } ~r_{i,j}\geq 0.$$
\end{lemma}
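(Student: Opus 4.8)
The plan is to turn the defining relations into a terminating rewriting procedure on words in the generators and to identify the irreducible words with the claimed monomials. Since $\EE_n^!$ is generated in degree one, it is spanned as a $\kk$-vector space by all words $y_{a_1}y_{a_2}\cdots y_{a_d}$ in the generators $\{y_{i,j}\}_{i<j}$, so it suffices to show that every such word is congruent, modulo the relations of Definition--Lemma~\ref{xxdeflem0.2}, to a $\kk$-linear combination of non-decreasing words. I would order the words lexicographically using the given total order on the $y_{i,j}$. Because all defining relations are homogeneous of degree two, every rewriting step preserves total degree; as there are only finitely many words of each degree, the lexicographic order is well-founded on them, which will give termination.

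The rewriting rules target a \emph{descent}, i.e. an adjacent factor $y_a y_b$ with $a>b$ in the generator order. There are two cases. If the index sets of $y_a$ and $y_b$ are disjoint, the relation $y_{i,j}y_{k,l}+y_{k,l}y_{i,j}=0$ gives $y_a y_b=-y_b y_a$, whose leading generator $y_b$ is strictly smaller. If $y_a$ and $y_b$ share exactly one index, they are two of the three standard generators on a triple $\{i<j<k\}$, for which the given order reads $y_{i,j}<y_{i,k}<y_{j,k}$; a direct computation from the index-sharing relations, using the convention $y_{j,i}=-y_{i,j}$, yields the three rules
$$y_{i,k}\,y_{i,j}=-\,y_{i,j}\,y_{j,k},\qquad y_{j,k}\,y_{i,j}=-\,y_{i,j}\,y_{i,k},\qquad y_{j,k}\,y_{i,k}=-\,y_{i,j}\,y_{j,k}.$$
In each rule the right-hand side is a single ordered product whose leading generator $y_{i,j}$ is strictly smaller than the leading generator on the left. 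Thus in both cases a descent is replaced by $\kk$-multiples of strictly lexicographically smaller words of the same degree.

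Applying these rules repeatedly to a word, each step strictly decreases it in the lexicographic order, since the prefix to the left of the rewritten factor is unchanged and makes the local decrease global. By well-foundedness the process terminates after finitely many steps and expresses the original word as a $\kk$-linear combination of \emph{irreducible} words, namely those containing no descent. An irreducible word has its generators in non-decreasing order, hence is exactly of the form $y_{1,2}^{r_{1,2}}y_{1,3}^{r_{1,3}}\cdots y_{n-1,n}^{r_{n-1,n}}$ with $r_{i,j}\ge 0$, which gives the claim.

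The step requiring the most care is the index-sharing case: one must run through the six orderings of a triple $\{i<j<k\}$ and track the signs produced by $y_{j,i}=-y_{i,j}$ to confirm that each of the three descents reduces to an \emph{ordered} (hence lexicographically smaller) product rather than to another descent — this is what makes the rewriting decrease the leading generator every time. The remaining ingredients (spanning by words, degree preservation, and well-foundedness of the order) are routine. Finally, I would emphasize that only a spanning statement is asserted here, so no confluence or uniqueness of normal forms is needed; in particular this argument does not by itself show that the listed monomials are linearly independent.
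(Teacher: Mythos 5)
Your proof is correct and follows essentially the same route as the paper's: the paper likewise rewrites each descent $y_{k,l}y_{i,j}$ (with $y_{k,l}>y_{i,j}$) into lexicographically lower terms using the relations of Definition-Lemma~\ref{xxdeflem0.2} and then concludes via Bergman's Diamond Lemma. The only cosmetic differences are that for the middle descent the paper uses the equally valid rewrite $y_{j,k}y_{i,j}=-y_{i,k}y_{j,k}$ where you use $y_{j,k}y_{i,j}=-y_{i,j}y_{i,k}$ (both identities hold in $\EE_n^!$), and that you spell out the termination argument directly instead of citing the Diamond Lemma, correctly observing that confluence is not needed for a mere spanning statement.
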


\begin{proof}
First, we show that if $y_{k,l}> y_{i,j}$, then 
$y_{k,l}y_{i,j}$ can be written as a lower order term by using
relations of Definition-Lemma~\ref{xxdeflem0.2}.
 If $\{k,l\}\cap \{i,j\}=\emptyset$, then 
we use the last relation of $\EE_n^{!}$. Otherwise, for $k>j>i$ we have
 that
$$
y_{i,k}y_{i,j}=-y_{i,j}y_{j,k}, \quad \quad
y_{j,k}y_{i,j}=-y_{i,k}y_{j,k}, \quad \quad
y_{j,k}y_{i,k}=-y_{i,j}y_{j,k}.
$$
Now by Bergman's Diamond Lemma \cite{Be}, every element in 
$\EE_n^{!}$ is a linear combination of the monomials 
$y_{1,2}^{r_{1,2}}y_{1,3}^{r_{1,3}}y_{2,3}^{r_{2,3}}y_{1,4}^{r_{1,4}}\cdots
y_{n-2,n}^{r_{n-2,n}}y_{n-1,n}^{r_{n-1,n}}$.
\end{proof}

The following is a preliminary result needed for constructing 
a central subalgebra of $\EE_n^!$ to aid in determining its 
GK-dimension (and PI property, introduced later).

\begin{lemma}
\label{xxlem1.4} Let $a_{i,j}:=y_{i,j}^2$. We have that
$a_{i,j}$ is central in $\EE_{n}^{!}$  for all $i<j$, and that the following relations hold in $\EE_{n}^{!}$:
$$a_{i,j}y_{j,k}=y_{j,k} a_{i,k} \quad \text{and} \quad a_{i,j}a_{j,k} = a_{i,j}a_{i,k} ~\quad \forall \text{ distinct } i,j,k.$$ 
\end{lemma}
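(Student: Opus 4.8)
The plan is to derive all three assertions directly from the quadratic relations of Definition-Lemma~\ref{xxdeflem0.2}, which I would record in the single uniform form $y_{p,q}y_{q,r}=-y_{q,r}y_{p,r}$ for distinct $p,q,r$ (using the antisymmetry convention $y_{q,p}=-y_{p,q}$), henceforth denoted $(\ast)$, together with the anticommutation $y_{i,j}y_{k,l}=-y_{k,l}y_{i,j}$ when $\{i,j\}\cap\{k,l\}=\emptyset$. Combined with the rewriting identities already recorded in the proof of Lemma~\ref{xxlem1.3}, everything follows by direct manipulation; the only genuinely delicate point is the overlapping case of centrality, and the rest is formal.

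First I would prove that $a_{i,j}=y_{i,j}^2$ is central by checking that it commutes with every generator $y_{k,l}$, which suffices since the $y_{k,l}$ generate $\EE_n^!$. When $\{k,l\}\cap\{i,j\}=\emptyset$ the generators anticommute, so $y_{i,j}^2 y_{k,l}=y_{i,j}(-y_{k,l}y_{i,j})=y_{k,l}y_{i,j}^2$, giving commutation after applying the anticommutation twice. The remaining case is when $y_{k,l}$ shares exactly one index with $\{i,j\}$; using $a_{i,j}=a_{j,i}$ and the antisymmetry of the $y$'s, I would reduce this to showing $a_{i,j}y_{j,k}=y_{j,k}a_{i,j}$ for $k\notin\{i,j\}$. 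This is the main obstacle: it requires moving $y_{j,k}$ across \emph{both} factors of $y_{i,j}^2$, applying $(\ast)$ twice while keeping careful track of signs and of the index substitutions (for instance $y_{i,j}y_{j,k}=-y_{j,k}y_{i,k}$ followed by $y_{j,k}y_{i,k}=-y_{i,j}y_{j,k}$, and the analogous chain for $y_{j,k}y_{i,j}^2$). The difficulty is entirely one of bookkeeping rather than of concept.

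The same computation, stopped one step earlier, yields relation (2): carrying $y_{j,k}$ past $y_{i,j}^2$ a single time produces $y_{i,j}^2 y_{j,k}=y_{j,k}y_{i,k}^2$, i.e.\ $a_{i,j}y_{j,k}=y_{j,k}a_{i,k}$. I would then state this for all distinct $i,j,k$ by invoking the antisymmetry to cover every choice of shared index and orientation.

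Finally, relation (3) is a formal consequence of relation (2) together with centrality. Squaring the move in relation (2), I would compute $a_{i,j}a_{j,k}=(a_{i,j}y_{j,k})\,y_{j,k}=(y_{j,k}a_{i,k})\,y_{j,k}=a_{i,k}y_{j,k}^2=a_{i,k}a_{j,k}$, using centrality of $a_{i,k}$ at the penultimate step. Applying relation (2) in the form $a_{i,j}y_{i,k}=y_{i,k}a_{j,k}$ (obtained via $a_{i,j}=a_{j,i}$) gives in the same fashion $a_{i,j}a_{i,k}=a_{i,k}a_{j,k}$. Comparing the two identities yields $a_{i,j}a_{j,k}=a_{i,j}a_{i,k}$, as desired. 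Thus once the overlapping case of centrality is settled, relations (2) and (3) drop out with no further work.
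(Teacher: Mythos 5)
Your overall architecture is the same as the paper's (commute $a_{i,j}$ past each generator, splitting into the disjoint and overlapping cases; get relation (2) by a one-pass rewriting; get relation (3) formally), and two of your three steps are sound: the disjoint case is exactly the paper's two-anticommutation argument, and your derivation of $a_{i,j}a_{j,k}=a_{i,j}a_{i,k}$ from relation (2) plus centrality is correct and arguably cleaner than the paper's direct four-term computation. However, the centrality step --- which you correctly flag as the crux --- has a genuine gap. The two identities you cite, $y_{i,j}y_{j,k}=-y_{j,k}y_{i,k}$ and $y_{j,k}y_{i,k}=-y_{i,j}y_{j,k}$, are the same relation read in both directions, so composing them returns you to your starting point. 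If instead you honestly push $y_{j,k}$ leftward through $y_{i,j}^2$ using $(\ast)$, each pass converts the \emph{stationary} factor $y_{i,j}$ into $y_{i,k}$, and you land at $a_{i,j}y_{j,k}=y_{j,k}a_{i,k}$ --- that is relation (2), not centrality; the ``analogous chain'' for $y_{j,k}y_{i,j}^2$ likewise yields $a_{i,k}y_{j,k}$. These two families (which are relabelings of one another: $a_{i,j}y_{j,k}=y_{j,k}a_{i,k}$ and $a_{i,k}y_{j,k}=y_{j,k}a_{i,j}$) do \emph{not} imply $a_{i,j}y_{j,k}=y_{j,k}a_{i,j}$: taking $u=\left(\begin{smallmatrix}0&1\\1&0\end{smallmatrix}\right)$, $a=\left(\begin{smallmatrix}1&0\\0&2\end{smallmatrix}\right)$, $b=\left(\begin{smallmatrix}2&0\\0&1\end{smallmatrix}\right)$ gives $au=ub$ and $bu=ua$ with $[a,u]\neq 0$. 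The subtlety is that $\EE_n^!$ has zero divisors --- centrality together with relation (2) forces $y_{j,k}(a_{i,j}-a_{i,k})=0$, precisely the identity the paper later uses for non-primeness --- so two different rewriting chains can legitimately terminate at different-looking monomials, and you cannot cancel $y_{j,k}$ to pass between them.

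The repair is a different choice of relation instances, which is what the paper's computation does: move the $k$-labelled generator and let \emph{its} index flip and flip back. Writing $a_{i,j}y_{j,k}=-y_{i,j}^2y_{k,j}$, the instance $y_{k,i}y_{i,j}=-y_{i,j}y_{k,j}$ gives $-y_{i,j}^2y_{k,j}=y_{i,j}y_{k,i}y_{i,j}$, and then (after $y_{i,j}=-y_{j,i}$) the instance $y_{k,j}y_{j,i}=-y_{j,i}y_{k,i}$ gives $y_{k,j}y_{j,i}y_{i,j}=y_{j,k}a_{i,j}$. Here the moving factor morphs $y_{k,j}\to y_{k,i}\to y_{k,j}$, returning to itself, which is exactly what produces $a_{i,j}$ rather than $a_{i,k}$ on the far side. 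Since your proof of relation (3) invokes centrality, the gap propagates there as well, although the logical route (2) $+$ centrality $\Rightarrow$ (3) is fine once centrality is secured by the chain above.
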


\begin{proof} 
 If $\{i,j\}\cap \{s,t\}=\emptyset$, then by Definition-Lemma~\ref{xxdeflem0.2} we get that
$$y_{s,t} a_{i,j}~=~y_{s,t} y_{i,j}^2~=~
-y_{i,j}y_{s,t}y_{i,j}~=~y_{i,j}^2 y_{s,t}~=~a_{i,j}
y_{s,t}.$$
On the other hand, we will show that $a_{i,j}$ and $y_{j,k}$ commute for any $i<j$, with $k = i$ or $j$. Recall that $y_{j,i}=-y_{i,j}$ for all $i<j$. Indeed,
$$a_{i,j} y_{j,k} ~=~ -y_{i,j}^2 y_{k,j} ~=~ y_{i,j} y_{k,i} y_{i,j} ~=~ -y_{j,i} y_{k,i} y_{i,j} ~=~ y_{k,j} y_{j,i} y_{i,j}
~=~y_{j,k} a_{i,j}.$$
Hence, $a_{i,j}$ is central.

By Definition-Lemma~\ref{xxdeflem0.2}, we also obtain that
\[
\begin{array}{llll}\medskip
a_{i,j}y_{j,k}&= -y_{i,j} y_{j,k}y_{i,k}&= y_{j,k}y_{i,k}y_{i,k}&=y_{j,k} a_{i,k}, \quad \text{and}\\
a_{i,j}a_{j,k}&=-y_{i,j} y_{j,k} y_{i,k} y_{j,k}&=y_{i,j} y_{j,k} y_{j,i} y_{i,k}&=-y_{i,j} y_{k,j} y_{j,i} y_{i,k}\\
&=y_{k,i} y_{i,j} y_{j,i} y_{i,k}&=a_{i,j} a_{i,k},
\end{array}
\]
as claimed.
\end{proof}

\begin{notation}[$a_{i,j}$, $\CC_n$, $\DD_n$] \label{xxnot1.5} 
From now on, let $a_{i,j}$ denote the central elements $y_{i,j}^2$ 
(see Lemma~\ref{xxlem1.4}) and we use the notation 
$a_{i,j}$ for $a_{j,i}$ when $i>j$. 

\smallskip Let $\CC_n$ be the commutative subalgebra of $\EE_n^{!}$ generated by 
 $\{a_{i,j}\mid 1\leq i<j\leq n\}$. 
Let $\DD_n$ be the commutative algebra generated by 
$\{a_{i,j}\mid 1\leq i<j\leq n\}$, subject to the relations
$$a_{i,j}a_{j,k}=a_{i,j}a_{i,k}
\quad \forall \text{ distinct } i,j,k.$$ 
By Lemma~\ref{xxlem1.4} there is a natural algebra surjection
from $\DD_n$ to $\CC_n$. Later in Proposition \ref{xxpro2.10}, 
we will show that $\DD_n$ is indeed isomorphic to $\CC_n$.
Note that the relations in the algebra $\DD_n$ are similar to those of the 
algebra $A_n$ that appear in \cite[Section 4.1]{BEER}. 
\end{notation}

We use the commutative subalgebras $\CC_n$ and $\DD_n$ to 
study the GK-dimension and the PI property (defined below) of 
$\EE_n^!$ as follows.

\begin{definition} 
\label{xxdef1.6} \cite[Chapter~13]{MR}
A {\it polynomial identity} for a ring $A$ is a monic 
multilinear polynomial $f \in \mathbb{Z}\langle z_1, \dots, z_d\rangle$ 
so that $f(a_1, \dots, a_d) =0$ for all $a_i \in A$. A ring $A$ is 
called a {\it polynomial identity ring}, or is called {\it PI} 
for short, if such a polynomial $f$ exists for $A$.

\smallskip If a PI ring $A$ is prime, then the {\it PI degree} of $A$ is 
the half of the minimal degree of a polynomial identity for $A$.
\end{definition}

\begin{remark} \label{xxrem1.7}
\begin{enumerate}
\item[(1)] The PI property is preserved taking under subalgebra 
and homomorphic image, and algebras that module-finite over a 
commutative subalgebra are PI \cite[Lemma~13.1.7, Corollary~13.1.13]{MR}.

\smallskip

\item[(2)] 
If $A$ is a commutative domain (so, prime), then its PI 
degree is 1 since the commutator is a polynomial identity 
of minimal degree 2. On the other hand, the $q$-polynomial 
ring $\kk_q[z_1, \dots, z_m]$ from Remark~\ref{xxrem1.2}(2) 
is prime, and is PI if and only if $q$ is a root of unity. 
When $q$ is a primitive $d$-th root of unity, the PI degree 
of $\kk_q[z_1, \dots, z_m]$ is $d^{\lfloor m/2 \rfloor}$ \cite[I.14]{BG}.
\smallskip

\item[(3)] See \cite[p.98]{Row} for the (technical) definition 
of PI degree for~non-prime~PI~rings.
\end{enumerate}
\end{remark}

\begin{lemma}
\label{xxlem1.8}
Recall Notation~\ref{xxnot1.5}. The following statements hold.
\begin{enumerate}
\item[(1)] There is a natural surjective map
$\DD_n\twoheadrightarrow \CC_n$, and there is a natural injective map 
$\CC_n \hookrightarrow  \EE_n^{!}$.
As a consequence, $\EE_n^{!}$ is a module over both $\CC_n$ and $\DD_n$.
\smallskip
\item[(2)]
$\EE_n^{!}$ is a finitely-generated module both over $\CC_n$ and  $\DD_n$.
\smallskip
\item[(3)]
$\DD_n$ satisfies relations
\begin{equation}
\label{E1.8.1}\tag{E1.8.1}
a_{i,j}^2 a_{i,k}=a_{i,j}a_{i,k}^2
\end{equation}
for all distinct $i,j,k$.
\smallskip
\item[(4)]
$\GKdim \DD_n=\lfloor n/2\rfloor$.
\end{enumerate}
\end{lemma}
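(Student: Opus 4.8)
The plan is to dispatch parts (1)--(3) quickly from the preceding lemmas and to concentrate the real effort on the dimension computation in (4).

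Parts (1) and (2) are immediate consequences of Lemmas~\ref{xxlem1.4} and~\ref{xxlem1.3}. For (1), Lemma~\ref{xxlem1.4} shows that the central elements $a_{i,j}=y_{i,j}^2$ of $\EE_n^!$ satisfy exactly the defining relations of $\DD_n$, so $a_{i,j}\mapsto y_{i,j}^2$ defines an algebra map $\DD_n\to\EE_n^!$ whose image is $\CC_n$, giving the surjection $\DD_n\twoheadrightarrow\CC_n$; the inclusion $\CC_n\hookrightarrow\EE_n^!$ is tautological since $\CC_n$ was defined as a subalgebra. Multiplication then makes $\EE_n^!$ a module over the commutative subalgebra $\CC_n$, and restriction of scalars along $\DD_n\twoheadrightarrow\CC_n$ makes it a $\DD_n$-module. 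For (2), Lemma~\ref{xxlem1.3} spans $\EE_n^!$ by the ordered monomials $\prod_{i<j}y_{i,j}^{r_{i,j}}$; since each $a_{i,j}=y_{i,j}^2$ is central, I would rewrite $y_{i,j}^{r_{i,j}}$ as $a_{i,j}^{\lfloor r_{i,j}/2\rfloor}$ or $a_{i,j}^{\lfloor r_{i,j}/2\rfloor}y_{i,j}$ according to the parity of $r_{i,j}$ and collect the resulting squares into the coefficient ring $\CC_n$. This exhibits each monomial as a $\CC_n$-multiple of one of the finitely many squarefree monomials $\prod_{i<j}y_{i,j}^{\varepsilon_{i,j}}$ with $\varepsilon_{i,j}\in\{0,1\}$; these generate $\EE_n^!$ over $\CC_n$, hence also over $\DD_n$ via~(1).

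For (3) I would argue entirely inside $\DD_n$. Permuting the three indices in the defining relation $a_{i,j}a_{j,k}=a_{i,j}a_{i,k}$ yields all three equalities $a_{i,j}a_{i,k}=a_{i,j}a_{j,k}=a_{i,k}a_{j,k}$; that is, the three pairwise products of the edge-variables on the triangle $\{i,j,k\}$ agree. Writing $a=a_{i,j}$, $b=a_{i,k}$, $c=a_{j,k}$, two substitutions give $a^2b=a(ab)=a(bc)=abc=b(ac)=b(ab)=ab^2$, using $ab=bc$ and then $ac=ab$; this is precisely the asserted relation $a_{i,j}^2a_{i,k}=a_{i,j}a_{i,k}^2$.

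Part (4) is where the real work lies. As $\DD_n$ is commutative and finitely generated, $\GKdim\DD_n$ is its Krull dimension by Remark~\ref{xxrem1.2}(3), and I would bound this above and below by $\lfloor n/2\rfloor$. For the lower bound, fix a maximum matching $M$ of the complete graph on $\{1,\dots,n\}$, so $|M|=\lfloor n/2\rfloor$ and no two edges of $M$ meet. In each triangle at most one edge lies in $M$, so in every defining relation at least one factor of each monomial is killed upon passing to $\DD_n/(a_e:e\notin M)$; thus all relations vanish and this quotient is the polynomial ring $\kk[a_e:e\in M]$ in $\lfloor n/2\rfloor$ variables, which as a homomorphic image has GK-dimension at most $\GKdim\DD_n$ by Remark~\ref{xxrem1.2}(4), giving $\GKdim\DD_n\ge\lfloor n/2\rfloor$. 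The upper bound is the main obstacle. Writing the Krull dimension of $\DD_n$ as the maximum of $\operatorname{tr.deg}_\kk\operatorname{Frac}(\DD_n/\mathfrak p)$ over minimal primes $\mathfrak p$, I would analyze the domain $\DD_n/\mathfrak p$ through its \emph{generic support} $S=\{e:\bar a_e\ne 0\}$. In a domain the relation $\bar a_{i,j}\bar a_{j,k}=\bar a_{i,j}\bar a_{i,k}$ forces $\bar a_{j,k}=\bar a_{i,k}$ whenever $\bar a_{i,j}\ne 0$; iterating this over a triangle shows that if two edges of $S$ share a vertex then the third edge of that triangle also lies in $S$ and all three carry the same generic value. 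Consequently each connected component of $S$ is a clique whose edge-variables all equal a single element $t_c\in\operatorname{Frac}(\DD_n/\mathfrak p)$, distinct cliques are vertex-disjoint (a common vertex would merge them), and each uses at least two vertices, so there are at most $\lfloor n/2\rfloor$ of them. Since $\DD_n/\mathfrak p$ is generated over $\kk$ by these finitely many $t_c$ (every other $\bar a_e$ being $0$), its fraction field has transcendence degree at most $\lfloor n/2\rfloor$, giving $\GKdim\DD_n\le\lfloor n/2\rfloor$ and hence equality. The crux is establishing the disjoint-clique structure of $S$ cleanly; the dimension count is then immediate.
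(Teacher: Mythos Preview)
Your arguments for (1)--(3) and for the lower bound in (4) match the paper's proof essentially verbatim (the paper also quotients by the non-matching edge-variables to get a polynomial ring in $\lfloor n/2\rfloor$ variables, and your computation in (3) is a harmless rearrangement of theirs).

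The upper bound in (4) is where you genuinely diverge. The paper stays combinatorial: using the Diamond Lemma together with the relations of $\DD_n$ and the consequence (3), it argues that any ordered monomial in which every exponent is at least $2$ must involve pairwise disjoint index pairs, hence at most $\lfloor n/2\rfloor$ distinct variables; this gives a finite spanning set over a subspace of GK-dimension $\le\lfloor n/2\rfloor$. Your route is geometric: you invoke $\GKdim=\Kdim$ for affine commutative algebras, pass to a minimal prime, and use cancellation in the resulting domain to force the support of the surviving generators to decompose into vertex-disjoint cliques with a single value on each, bounding the transcendence degree by the number of cliques. This is correct, and in fact it is precisely the viewpoint the paper develops \emph{later} in Section~\ref{xxsec2} (see the stratification of $\Spec\DD_n$ in the proof of Proposition~\ref{xxpro2.10} and Remark~\ref{xxrem2.12}, attributed to Etingof). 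Your approach is cleaner and more conceptual, and it foreshadows the structure theory to come; the paper's Diamond Lemma argument has the modest advantage of being self-contained at this point in the exposition, without appealing to Krull dimension or prime decomposition.
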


\begin{proof} 
(1) This follows from Lemma~\ref{xxlem1.4}.

\smallskip

(2) By Lemmas~\ref{xxlem1.3}
and~\ref{xxlem1.4}, we have that as a $\kk$-vector space,
$$\EE_{n}^{!}=\sum_{e_{i,j}=0,1} \kk \; y_{1,2}^{e_{1,2}}y_{1,3}^{e_{1,3}} \cdots
y_{n-1,n}^{e_{n-1,n}} \;  \CC_n.$$ Therefore,
$\EE_{n}^{!}$ is a finitely generated $\CC_n$-module. The last claim 
holds as $\DD_n \twoheadrightarrow \CC_n$.

\smallskip

(3) In $\DD_n$ we have
$a_{i,j} (a_{i,k}-a_{j,k})=0$
for all distinct $i,j,k$. Then we get
$$a_{i,j}a_{i,k}(a_{i,j}-a_{i,k})=a_{i,j}a_{j,k}(a_{i,j}-a_{i,k})=0.$$
as desired.

\smallskip

(4) Let $m:=\lfloor n/2\rfloor$ and 
let $I:=\{(1,2),(3,4), \dots, (2m-1,2m)\}$.
Let $B$ be the quotient algebra $\DD_n/(a_{i,j})_{i<j, (i,j)\not\in I}$.
Observe that $B$ is the commutative polynomial ring 
generated by the set $\{a_{i,j}\}_{(i,j)\in I}$. 
So, $$\GKdim \DD_n\geq \GKdim B=m$$ by Remark~\ref{xxrem1.2}(2,4).

Next, we will show that $\GKdim \DD_n\leq m$. 
Order the generators $\{a_{i,j}\}$ of $\DD_n$ by 
$a_{i,j}<a_{k,l}$ if either $j<l$, or $j=l$ and $i<k$ 
(we assume that $i<j$ and $k<l$ here). Using the 
Diamond Lemma \cite{Be} and the relations in Lemmas~\ref{xxlem1.4} and~\ref{xxlem1.8}(3), we obtain  that if we have a nonzero 
monomial of the form
$$
a_{i_1, j_1}^{r_{i_1, j_1}} \cdots a_{i_{t}, j_{t}}^{r_{i_{t}, j_{t}}}
$$
that is not a linear combination of lower degree terms with 
$r_{i_{s},j_{s}}\geq 2$ for all $s$, then $\{i_{s_1},j_{s_1}\}\cap 
\{i_{s_2},j_{s_2}\}=\emptyset$. 
Therefore, $t\leq m$. Let $V$ be the graded subspace of $\DD_n$ defined by
$$V=\sum \Bbbk a_{i_1, j_1}^{r_{i_1, j_1}} \cdots a_{i_{t}, j_{t}}^{r_{i_{t},j_{t}}}$$
where sum runs over all possible $(i_s, j_s)$ such that 
$\{i_{s_1},j_{s_1}\}\cap \{i_{s_2},j_{s_2}\}=\emptyset$ for any pairs
with $r_{i_{s_1},j_{s_1}}, r_{i_{s_2},j_{s_2}}\neq 0$. By the description of 
monomials above, we get that
$$\DD_{n}=\sum_{e_{i,j} =0,1}\kk \; a_{1,2}^{e_{1,2}}a_{1,3}^{e_{1,3}} 
\cdots 
a_{n-1, n}^{e_{n-1,n}} \; V.$$ Since
$\GKdim V\leq m$, and since $\DD_{n}$ is a quotient
space of a finite direct sum of $V$, we obtain that $\GKdim \DD_{n}\leq 
m$ as desired.
\end{proof}

\bigskip

\begin{proof}[Proof of Theorem \ref{xxthm0.3}] 
(1) By Lemma~\ref{xxlem1.8}(2), $\EE_n^{!}$ is a finitely
generated module over the finitely-generated commutative 
(so, Noetherian) $\kk$-subalgebra  $\mathcal{C}_n$ from 
Notation~\ref{xxnot1.5}.
Thus $\EE_n^{!}$ is a finitely generated Noetherian PI 
algebra, and is module-finite over its center. 
See \cite[Proposition~1.6]{GW}  and Remark~\ref{xxrem1.7}(1).

\smallskip

(2) First, we  construct a factor algebra of  $\EE_n^{!}$ 
to bound its GK-dimension from below. Let $m:=\lfloor n/2\rfloor$ 
and let $I:=\{(1,2),(3,4), \dots, (2m-1,2m)\}$.
Now take $$B:=\EE_{n}^{!}/(y_{i,j})_{i<j, (i,j)\not\in I},$$
a factor algebra of $\EE_{n}^{!}$.
By Definition-Lemma~\ref{xxdeflem0.2} we get that
$B$ is isomorphic to the $(-1)$-skew polynomial ring
$\Bbbk_{-1}[y_{1,2}, y_{3,4}, \dots, y_{2m-1,2m}]$.
So, by Remark~\ref{xxrem1.2}(2,4), 
$$\GKdim \EE_{n}^{!}\geq \GKdim B=m.$$  

On the other hand, since $\EE_{n}^{!}$ is a finite module 
over $\CC_n$, and $\CC_n$ is a homomorphic image of $\DD_n$ 
[Lemma~\ref{xxlem1.8}(1,2)], we obtain that 
$$\GKdim \EE_{n}^{!} = \GKdim \CC_n \leq \GKdim \DD_n=m$$
by Remark~\ref{xxrem1.2}(4) and Lemma~\ref{xxlem1.8}(4).
Thus, $\GKdim \EE_{n}^{!} = m = \lfloor n/2\rfloor.$

\smallskip

(3) This follows from Lemma~\ref{xxlem1.4}: Namely, 
$y_{j,k}(a_{i,j} - a_{i,k}) = 0$ for all 
distinct $i, j, k$, yet  $y_{j,k}$ and $a_{i,j} - a_{i,k}$ 
are nonzero in $\EE_n^!$ (by the Diamond Lemma \cite{Be}).
\end{proof}

\medskip

%%%%%%%%%%%%%%                        Section  2 %%%%%%%%%%%%%%%%%%%%%%%
%%%%%%%%%%%%%%%%%%%%%%%%%%%%%%%%%%%%%%%%%%%%%%%%%%%%%%%%%%%%%%%%%%%%%%%%%%
%%%%%%%%%%%%%%%%%%%%%%%%%%%%%%%%%%%%%%%%%%%%%%%%%%%%%%%%%%%%%%%%%%%%%%%%%%%%%%%%%%%%%%%%%%%%%%%%%%%%%%%%%%%%%%%
%%%%%%%%%%%%%%%%%%%%%%%%%%%%%%%%%%%%%%%%%%%%%%%%%%%%%%%%%%%%%%%%%%%%%%%%%%%%%%%%%%%%%%%%%%%%%%%%%%%%%%%%%%%%%%%
%%%%%%%%%%%%%%%%%%%%%%%%%%%%%%%%%%%%%%%%%%%%%%%%%%%%%%%%%%%%%%%%%%%%%%%%%%%%%%%%%%%%%%%%%%%%%%%%%%%%%%%%%%%%%%%

\section{Results of Etingof on the algebras $\mathcal{C}_n$ and $\mathcal{D}_n$}
\label{xxsec2}

Recall that  $n$ is an integer $\geq 2$. This section contains some results due to Pavel Etingof on the commutative algebras $\mathcal{C}_n$ and $\mathcal{D}_n$ from Notation~\ref{xxnot1.5}.
We thank him for suggesting these results and providing us with the
key ideas for the proofs. 

\smallskip To begin, we need to set some notation on set partitions and corresponding graphs and monomials.

\begin{notation}
[\textnormal{$[n]$}, $\pi$, $\Pi_n$, $B_r$, rk$(\pi)$, $\#_1(\pi)$, $\#_{\geq 2}(\pi)$] 
\label{xxnot2.1}
We fix some notation on set partitions.
\begin{itemize}
\item Let $[n]$ denote the set $\{1,\dots,n\}$. 
\smallskip

\item Let $\Pi_n$ denote the collection of set partitions of $[n]$. Namely, $\pi \in \Pi_n$ if $\pi $ is a set of nonempty subsets $B_1,\dots, B_d$ of $[n]$ so that each positive integer between 1 and $n$ lies in exactly one of the $B_i$.
 We refer to the subsets $B_i$ as the {\it blocks} of $\pi$, and we denote the number of blocks of $\pi$ by $\# \pi$.
\smallskip

\item The {\it rank} of $\pi \in \Pi_n$, denoted by rk$(\pi)$, is the value $n - \#\pi$.

\smallskip

\item We denote by $\#_1(\pi)$ (respectively, $\#_{\geq 2}(\pi)$) the number of blocks of $\pi$ that are singletons (respectively, have cardinality $\geq 2$).
\smallskip

\item We call $\pi \in \Pi_n$ {\it trivial} if $\# \pi = \#_1(\pi) = n$, that is, if $ \pi = \{\{1\}, \{2\}, \dots, \{n\}\}$.

\end{itemize}
\end{notation}

\begin{notation}[$G$, $G_r$, $m_r$, $V(G_r)$, $S(G)$, $T(G)$]  \label{xxnot2.2} Let $G$  be a graph with the vertex set $[n]$ subject to the following conditions. It is loopless and we allow for multiple edges between two vertices. Moreover, corresponding to $\pi \in \Pi_n$ as in Notation~\ref{xxnot2.1}:

\begin{itemize}
\item The graph $G$ has $\#\pi$ connected components of $G$ denoted by $\{G_r\}_{r=1}^{\#\pi}$.

\smallskip

\item Let $m_r = m_r(G_r)$ be the total number of edges 
in each connected component $G_r$ for each $r$. 
\smallskip

\item We refer to $\pi$ as the {\it support} 
of $G$, sometimes denoted by $S(G)$, in the sense that the block $B_r$ is the 
vertex set $V(G_r)$ of $G_r$. 
\smallskip

\item The {\it type} of $G$, denoted by $T(G)$, 
is the collection $\{(B_r:=V(G_r),m_r)\}_{r=1}^{\# \pi}$. 
\end{itemize}
\end{notation}

\begin{notation}[$G(\underline{f})$, $G_r(\underline{f})$, $m_r(\underline{f})$, $V(G_r(\underline{f}))$, $S(\underline{f})$, $T(\underline{f})$]  \label{xxnot2.3}
Let $F$ be a not necessarily commutative algebra generated by $\{f_{i,j}\mid 1\leq i<j \leq n\}$.
\begin{itemize} 
\item For each monomial $\underline{f}:=f_{i_1,j_1}f_{i_2,j_2}\cdots f_{i_w,j_w}$, we define a graph, denoted $G(\underline{f})$, with vertex set $[n]$, and with edges $i_s$ \textemdash ~$j_s$ for each $f_{i_s,j_s}$ in $\underline{f}$.
\end{itemize}
Then $G(\underline{f})$ is a disjoint union of connected components $\{G_r(\underline{f})\}_{r=1}^d$, 
for some $d \in \mathbb{N}$.
\begin{itemize} 
\item Let $m_r(\underline{f})$ be the total number of edges 
in $G_r(\underline{f})$ for each $r$. 
\item The {\it type} of $\underline{f}$ is defined to be the collection of pairs
$$T(\underline{f}):=T(G(\underline{f}))=\left \{\left(V(G_r(\underline{f})),m_r(\underline{f})\right)\right \}_{r=1}^d$$
and 
the {\it support} of $\underline{f}$ is the set partition of the vertex set of $G(\underline{f})$,
$$S(\underline{f}):=S(G(\underline{f}))=\left \{V(G_r(\underline{f}))\right \}_{r=1}^d.$$
\end{itemize}
\end{notation}

\medskip

We define a partial ordering on the types of graphs defined above. 

\begin{definition} \label{xxdef2.4}
Choose two graphs $G$ and $H$ on $n$ vertices as in Notation~\ref{xxnot2.2}; the same will apply for the graphs defined on monomials of the same degree as in Notation~\ref{xxnot2.3}.
We write $T(G)<T(H)$ if either:
\begin{enumerate}
\item[(a)]
The set partition $\pi_G \in \Pi_n$ corresponding to $T(G)$ is a {\it proper refinement} of $\pi_H \in \Pi_n$ of $T(H)$ (i.e., each block of $\pi_G$ is a subset of a block of $\pi_H$, one such being a proper subset), or
\smallskip

\item[(b)]
We have $\pi_G = \pi_H$ (i.e.,
$G$ and $H$ have the same support) and the sequence
$\{m_r(G_r)\}_{r=1}^d$ is less than $\{m_r(H_r)\}_{r=1}^d$ in the
lexicographic order.
\end{enumerate}
We also write $S(G)<S(H)$ if condition (a) holds.
\end{definition}

Next we turn our attention to the commutative algebra ${\mathcal D}_n$ from Notation~\ref{xxnot1.5} (which is studied in Lemma~\ref{xxlem1.8}). Recall that ${\mathcal D}_n$ is generated by commuting elements
$\{a_{i,j} \mid 1\leq i
< j\leq n\}$ and is subject to the relations
\begin{align}
\label{E2.4.1}\tag{E2.4.1}
a_{i,j}a_{j,k}&=a_{i,j}a_{i,k} \; \quad \; \forall \; 
{\text{distinct}} \; i,j,k.
\end{align}

We will use Bergman's Diamond lemma \cite{Be} in the next proof. Let 
$\mathcal{S}$ is a reduction system for the set of relations $\mathcal{R}$ of a given finitely presented algebra.
If all ambiguities of $\mathcal{R}$ can be resolved 
 using $\mathcal{S}$,  then we call $\mathcal{S}$ a {\it Gr{\"o}bner 
basis} of $\mathcal{R}$.

\begin{lemma}
\label{xxlem2.5}
Recall the commutative algebra ${\mathcal D}_n$ from Notation~\ref{xxnot1.5}.
Every monomial of ${\mathcal D}_n$ (in the variables $a_{i,j}$) is nonzero.
\end{lemma}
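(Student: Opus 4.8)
The plan is to take advantage of the very special shape of the defining relations of $\DD_n$: each relation $a_{i,j}a_{j,k}=a_{i,j}a_{i,k}$ is a \emph{difference of two monic monomials}, with no scalar coefficients and no third term, and the same is true of the (built-in) commutativity relations. For a commutative algebra presented by such pure-difference binomials, the relations can only \emph{identify} one monomial with another; they can never produce an additive cancellation that sends a nonzero monomial to $0$. The cleanest way to turn this observation into a proof is to recognize $\DD_n$ as a monoid algebra.

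Concretely, I would let $M_n$ be the commutative monoid presented by generators $a_{i,j}$ ($1\le i<j\le n$) and relations $a_{i,j}a_{j,k}=a_{i,j}a_{i,k}$ for distinct $i,j,k$ --- exactly the presentation of $\DD_n$ in Notation~\ref{xxnot1.5}, read in the category of monoids rather than algebras. Since the monoid-algebra functor $\kk[-]$ is left adjoint to the forgetful functor from commutative $\kk$-algebras to commutative monoids, it sends this monoid presentation to the corresponding algebra presentation; that is, $\kk[M_n]$ is the polynomial ring on the $a_{i,j}$ modulo the ideal generated by the binomials $a_{i,j}a_{j,k}-a_{i,j}a_{i,k}$, so $\kk[M_n]\cong\DD_n$. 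By the definition of a monoid algebra, $\kk[M_n]$ has a $\kk$-basis indexed by the \emph{distinct} elements of $M_n$, each of which is therefore a nonzero basis vector. Every monomial in the $a_{i,j}$ is the image of a single element of $M_n$ under this isomorphism, and hence is nonzero in $\DD_n$, which is exactly the claim.

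The only delicate point --- and where I expect the (mild) main obstacle to be --- is justifying the identification $\DD_n\cong\kk[M_n]$, which rests entirely on the relations being pure-difference binomials: a relation of the form $a-\lambda b$ with $\lambda\neq 1$, or one involving three or more monomials, could annihilate a monomial, so one must confirm that the presentation really has the monic two-term form stated in Notation~\ref{xxnot1.5}, which it does. An alternative that stays within the Bergman Diamond Lemma framework just introduced would be to orient the relations (together with the commutation relations) into a terminating reduction system in which every reduction rewrites one monic monomial as another, check that all overlap ambiguities resolve --- this is where the derived relations of Lemma~\ref{xxlem1.8}(3) arise --- and then observe that the unique normal form of any monomial is again a single monic monomial, hence a nonzero basis element. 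On that route the ambiguity resolution is the real work, but the payoff is identical: no monomial of $\DD_n$ collapses to $0$.
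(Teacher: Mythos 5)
Your proposal is correct, and your primary route is genuinely different from the paper's. The paper proves this lemma entirely inside the Diamond Lemma framework: it rewrites the defining relations as monic binomials $a_{i_1,j_1}a_{i_2,j_2}=a_{k_1,l_1}a_{k_2,l_2}$ and argues by induction that every relation added to the Gr\"{o}bner basis when resolving overlap ambiguities is again a monic binomial $a_{i_1,j_1}\cdots a_{i_w,j_w}=a_{k_1,l_1}\cdots a_{k_w,l_w}$, so each reduction rewrites one monomial as another single monomial and the normal form of any monomial is a monomial, hence nonzero --- this is precisely your ``alternative'' route, though the paper does not carry out the ambiguity resolution in any detail. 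Your monoid-algebra argument is cleaner and, if anything, more rigorous than the paper's sketch: since $\kk[-]$ is a left adjoint it preserves presentations by generators and relations, so the pure-difference binomial presentation exhibits $\DD_n$ as $\kk[M_n]$, in which every monomial is literally a basis vector; this sidesteps the inductive claim (asserted but not verified in the paper) that Gr\"{o}bner completion preserves the binomial shape, and it isolates the conceptual reason both proofs work. What the paper's route buys instead is the explicit reduction and normal-form machinery, which it reuses immediately in the proof of Proposition~\ref{xxpro2.6} to classify reduced monomials by type and compute the Hilbert series of $\DD_n$; your adjunction argument yields nonvanishing of monomials but not, by itself, those normal forms. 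Finally, the caveat you flagged is exactly the right one, and you handled it correctly: a relation $m_1-\lambda m_2$ with $\lambda\neq 1$ does not come from a monoid congruence (e.g., $x^2=2x^2$ forces $x^2=0$), so the observation that all relations of $\DD_n$ --- including commutativity, which is built into the commutative categories you work in --- are differences of two monic monomials is the essential hypothesis, and it holds here.
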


\begin{proof} This basically follows from Bergman's Diamond
lemma \cite{Be}. 
Rewrite the set of relations of ${\mathcal D}_n$  as 
\begin{equation}
\notag
a_{i_1,j_1}a_{i_2,j_2}=a_{k_1,l_1} a_{k_2,l_2}
\end{equation}
for some indices $(i_1,j_1)$, $(i_2,j_2)$, $(k_1,l_1)$ and
$(k_2,l_2)$. By induction, every relation in the Gr{\"o}bner 
basis derived from overlap ambiguity is of the form
\begin{equation}
\label{E2.5.1}\tag{E2.5.1}
a_{i_1,j_1}\cdots a_{i_w,j_w}=a_{k_1,l_1} \cdots a_{k_w,l_w}
\end{equation}
for some pairs of indices $(i_s,j_s)$, $(k_s,l_s)$, for
$s=1,\dots, w$. It is not necessary to specify what are 
these indices. In other words, the reduction in the Diamond 
Lemma only uses the relations of the form \eqref{E2.5.1}. 
This implies that every monomial is equal to a reduced 
monomial. Therefore every monomial of ${\mathcal D}_n$ is nonzero.
\end{proof}

We now discuss the growth of the algebra $\mathcal{D}_n$.

\begin{proposition} \label{xxpro2.6}
The Hilbert series of ${\mathcal D}_n$  is
$$H_{{\mathcal D}_n}(t)~~=~~\sum_{\pi \in \Pi_n} \frac{t^{ \textnormal{rk}(\pi)}}{(1-t)^{\#_{\geq 2}(\pi)}} ~~=~~
1+ \sum_{ \textnormal{non-triv~}\pi \in \Pi_n} \frac{t^{ \textnormal{rk}(\pi)}}{(1-t)^{\#_{\geq 2}(\pi)}}.$$
\end{proposition}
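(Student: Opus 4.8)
The plan is to compute $H_{\mathcal{D}_n}(t)$ by counting, in each degree $m$, the equivalence classes of degree-$m$ monomials of $\mathcal{D}_n$. Since the defining relations \eqref{E2.4.1} are differences of monomials and, by Lemma~\ref{xxlem2.5}, no monomial is zero in $\mathcal{D}_n$, the equivalence classes of monomials under \eqref{E2.4.1} form a $\kk$-basis, so $\dim_\kk (\mathcal{D}_n)_m$ is the number of such classes. Identifying each monomial $\underline{a}$ with its multigraph $G(\underline{a})$ on $[n]$ (Notation~\ref{xxnot2.3}), I would first record that one application of \eqref{E2.4.1} deletes the edge $\{j,k\}$ and inserts $\{i,k\}$ while keeping $\{i,j\}$; as $i,j,k$ lie in a single connected component both before and after, the move preserves the support $S(\underline{a})$ and the edge count of each component, hence preserves the type $T(\underline{a})$. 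Thus the type is a class invariant, and the count can be organized by type.

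The heart of the argument is the converse: monomials of the same type are equivalent. Because the congruence acts within connected components, this reduces to a single block, i.e.\ I would prove that for a set $B$ of size $b \geq 2$, any two connected multigraphs on $B$ with the same number $m \geq b-1$ of edges are equivalent. I would do this by reduction to a normal form anchored at $v_0 := \min B$, in two stages. First, let $\Phi(G)$ count the edges not incident to $v_0$; whenever $\Phi(G) > 0$, connectivity forces some neighbor $w$ of $v_0$ to carry an edge $\{w,u\}$ with $u \neq v_0$, and the relation $a_{v_0,w}a_{w,u} = a_{v_0,w}a_{v_0,u}$ replaces $\{w,u\}$ by $\{v_0,u\}$, strictly decreasing $\Phi$. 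Iterating produces a \emph{star} at $v_0$, with edge multiplicities $(c_1,\dots,c_{b-1})$, each $c_i \geq 1$ and $\sum_i c_i = m$. Second, a triangle move---applying $a_{v_0,v_j}a_{v_0,v_i} = a_{v_0,v_j}a_{v_i,v_j}$ and then $a_{v_0,v_i}a_{v_i,v_j} = a_{v_0,v_i}a_{v_0,v_j}$ when $c_i \geq 2$---transfers one unit of multiplicity from $\{v_0,v_i\}$ to $\{v_0,v_j\}$. Pushing all excess multiplicity onto $\{v_0,v_1\}$ drives every star to the canonical monomial $a_{v_0,v_1}^{\,m-b+2}\,a_{v_0,v_2}\cdots a_{v_0,v_{b-1}}$, which depends only on $B$ and $m$. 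I expect this normal-form reduction---especially the triangle move, which is exactly what collapses the many spanning trees and multiplicity distributions into a single class---to be the main obstacle; the termination of the first stage is controlled cleanly by the monovariant $\Phi$.

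With completeness of the type invariant established, a basis of $\mathcal{D}_n$ is indexed by a set partition $\pi = \{B_1,\dots,B_d\} \in \Pi_n$ together with, for each block, an edge count $m_r$ that is $0$ when $|B_r| = 1$ and ranges over $m_r \geq |B_r|-1$ when $|B_r| \geq 2$, the degree being $\sum_r m_r$. Summing $t^{\deg}$ over this set factors over blocks: a singleton contributes $1$, and a block with $|B_r| \geq 2$ contributes $\sum_{m \geq |B_r|-1} t^m = t^{|B_r|-1}/(1-t)$, so
$$H_{\mathcal{D}_n}(t) = \sum_{\pi \in \Pi_n}\ \prod_{r\,:\,|B_r|\geq 2} \frac{t^{|B_r|-1}}{1-t}.$$
To finish, I would simplify the exponents using $\sum_{|B_r|\geq 2}(|B_r|-1) = \sum_r(|B_r|-1) = n - \#\pi = \textnormal{rk}(\pi)$ together with the observation that the number of factors $(1-t)^{-1}$ equals $\#_{\geq 2}(\pi)$, which yields the first displayed formula; isolating the trivial partition (for which $\textnormal{rk}$ and $\#_{\geq 2}$ both vanish, contributing the summand $1$) then gives the second.
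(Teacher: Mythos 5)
Your proof is correct, and its skeleton matches the paper's: both classify the monomial basis of $\mathcal{D}_n$ by type, normalize to the same canonical representatives $\underline{a}(B,m)=a_{i_1,i_2}^{m-q+2}a_{i_1,i_3}\cdots a_{i_1,i_q}$, and perform the identical block-by-block count giving $t^{\textnormal{rk}(\pi)}/(1-t)^{\#_{\geq 2}(\pi)}$. The genuine difference lies in how the key claim---that two monomials of the same type coincide in $\mathcal{D}_n$---is established. The paper routes this through Bergman's Diamond Lemma: it observes (exactly as you do) that each rewriting step preserves the type, argues that the canonical products \eqref{E2.6.2} are lexicographically smallest and hence reduced, and dispatches the normalization of an arbitrary connected monomial in one sentence (``By using the relations \eqref{E2.4.1}, one sees that \dots'') plus a brief induction on degree. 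You instead work symmetrically with the congruence generated by \eqref{E2.4.1} and give a fully explicit algorithm: the monovariant $\Phi$ (number of edges missing $v_0$) drives any connected multigraph to a star---your connectivity argument for finding the edge $\{w,u\}$ with $w$ a neighbor of $v_0$ and $u\neq v_0$ is sound---and the two-step triangle move then funnels all excess multiplicity onto a single edge; linear independence of the classes comes from the binomial form of the relations together with Lemma~\ref{xxlem2.5}, rather than from uniqueness of reduced forms. Your route buys self-containedness: it needs no monomial order and no confluence or resolvability of ambiguities, and it supplies in full detail the single-block normalization that the paper only asserts. The paper's framing, in turn, meshes with the Gr\"obner-basis discussion surrounding Lemma~\ref{xxlem2.5} and yields reduced normal forms in the fixed ordering reused elsewhere (e.g., in the proof of Lemma~\ref{xxlem1.8}(4)). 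The counting finale, including the identity $\sum_{r:\,|B_r|\geq 2}(|B_r|-1)=\textnormal{rk}(\pi)$ and the isolation of the trivial partition, is the same in both.
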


\begin{proof}
Take $\underline{a}:=\prod_{s=1}^w a_{i_s,j_s}$, a monomial of ${\mathcal D}_n$; it is fine to use $\prod$ since $\DD_n$ is commutative. For 
$B=\{i_1,\dots, i_q\}\subseteq [n]$ and $m\geq q-1$,
let  $$\underline{a}(B,m):= a_{i_1,i_2}^{m-q+2} a_{i_1,i_3}
\cdots a_{i_1,i_q}$$
be a certain monomial of $\DD_n$ of degree $m$. By convention, $\underline{a}(B,m)=1$ if $B$ is a singleton
and consequently, $m=0$. By using the relations \eqref{E2.4.1}, 
one sees that $\underline{a}$ is equal to $\underline{a}(B,m)$ in
${\mathcal D}_n$ if and only if $G(\underline{a})$ is connected except for
singletons.

\smallskip Further, let $\underline{a}$ be a monomial with 
$T(\underline{a})= \{(B_r:=V(G_r), m_r)\}_{r=1}^d$, then we claim that
\begin{equation}
\label{E2.6.1}\tag{E2.6.1}
\underline{a}=\prod_{m_r\geq 1} \underline{a}(B_r,m_r) =\prod_{r} \underline{a}(B_r, m_r)
\end{equation}
in the algebra ${\mathcal D}_n$. In other words, a monomial
is determined by its type uniquely. In equation \eqref{E2.6.1}, 
$B_r$s are disjoint subsets of $[n]$. 

\smallskip To show the above claim,  we first show that every element 
of the form 
\begin{equation}
\label{E2.6.2}\tag{E2.6.2}
\underline{a}(B_1,m_1) \cdots \underline{a}(B_d, m_d)
\end{equation} 
is reduced. Every step in the reduction of the Diamond Lemma 
\cite{Be} uses one of commuting relations of ${\mathcal D}_n$ or one of the 
relations in \eqref{E2.4.1} such that $(i,j,k)$ are one of 
the connected components of $G(\underline{a})$. This means that in every step
$\underline{f}=\underline{f}'$ in the reduction process, we have $T(\underline{f})=T(\underline{f}')$. Therefore $T(\underline{a})=T(\underline{a}')$
if $\underline{a}=\underline{a}'$ for two monomials in ${\mathcal D}_n$. 
Clearly, $\underline{a}(B_1,m_1) \cdots \underline{a}(B_d, m_d)$ is the smallest 
with respect to the lexicographic order. Therefore 
$\underline{a}(B_1,m_1) \cdots \underline{a}(B_d, m_d)$ is reduced. 
Secondly, if $\underline{a}$ is a monomial that is not in the form of 
\eqref{E2.6.2}, it can be reduced to another monomial of 
the form of \eqref{E2.6.2} by induction on the degree of 
the monomial. Thus we proved the claim. 

\smallskip
Now the assertion 
follows by counting monomials of the form in \eqref{E2.6.1}. Namely, fix $\pi \in \Pi_n$. Then the  monomials of positive degree  of the form in \eqref{E2.6.1} corresponding to $\pi = \{B_1, \dots, B_{\# \pi}\}$ are $\prod_{r \textnormal{ with } |B_r| \geq 2~ } \underline{a}(B_r, m_r)$; this contributes to the Hilbert series of ${\mathcal D}_n$ the term
$$\prod_{r \textnormal{ with } |B_r| \geq 2~ } (t^{|B_r|-1} + t^{|B_r|} + t^{|B_r| + 1} + \dots) = 
\prod_{r \textnormal{ with } |B_r| \geq 2~ } \frac{t^{|B_r|-1}}{1-t}$$
since the degree of $\underline{a}(B_r, m_r)$ is $m_r$ which has lowest value $|B_r|-1$. 
Since 
\[
\begin{array}{rl}
\sum_{r \textnormal{ with } |B_r| \geq 2~ } (|B_r| - 1) &= \left(\sum_{r \textnormal{ with } |B_r| \geq 2~ } |B_r|\right) - \#_{\geq 2}(\pi)\\
 & = \left(n - \#_1(\pi)\right) - \#_{\geq 2}(\pi)\\
 & = \textnormal{rk}(\pi),
\end{array}
\]
we have that the monomials of positive degree corresponding to a partition $\pi \in \Pi_n$ contributes 
$$ \prod_{r \textnormal{ with } |B_r| \geq 2~ } \frac{t^{|B_r|-1}}{1-t} ~=~ \frac{t^{\textnormal{rk}(\pi)}}{(1-t)^{\#_{\geq 2}(\pi)}}$$
to the Hilbert series of $\mathcal{D}_n$.
\end{proof}

Naturally, we then inquire:

\begin{question} \label{xxque2.7}
What is the Hilbert series of ${\mathcal E}_n^!$?
\end{question}

\begin{proposition}[Etingof]
\label{xxpro2.8} 
The algebra ${\mathcal D}_n$ is reduced, that is,  
${\mathcal D}_n$  has no nonzero nilpotent elements. 
As a consequence, ${\mathcal D}_n$ is semiprime.
\end{proposition}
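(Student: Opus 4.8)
The plan is to prove that $\DD_n$ is reduced by constructing an injective algebra homomorphism from $\DD_n$ into a (possibly infinite) direct product of polynomial rings. Since each polynomial ring is a domain and an arbitrary product of reduced rings is reduced, reducedness will descend to the subalgebra $\DD_n$. The stated consequence is then immediate: a commutative ring with no nonzero nilpotent elements has no nonzero nilpotent ideals, so a reduced commutative ring is semiprime.

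\textbf{Construction of the separating homomorphisms.} For each set partition $\pi \in \Pi_n$, I would let $R_\pi := \kk[t_B : B \in \pi,\ |B| \geq 2]$ be the polynomial ring with one degree-one variable $t_B$ for each block of $\pi$ of size $\geq 2$, and define $\phi_\pi \colon \DD_n \to R_\pi$ on generators by $\phi_\pi(a_{i,j}) = t_B$ if $i$ and $j$ lie in a common block $B$ of $\pi$ with $|B|\geq 2$, and $\phi_\pi(a_{i,j}) = 0$ otherwise. The first step is to verify that $\phi_\pi$ respects the relations $a_{i,j}a_{j,k}=a_{i,j}a_{i,k}$: the only substantive case is when $i,j,k$ all lie in one block $B$, where both sides become $t_B^2$; if $i,j$ lie in distinct blocks both sides vanish, and if $i,j\in B$ but $k\notin B$ both sides equal $t_B\cdot 0$. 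Assembling these yields a graded homomorphism $\Phi=(\phi_\pi)_{\pi\in\Pi_n}\colon \DD_n \to \prod_{\pi\in\Pi_n} R_\pi$, and it suffices to show $\Phi$ is injective. Because $\DD_n$ and $\Phi$ are graded, $\ker\Phi$ is a graded ideal, so I would check injectivity degree by degree, i.e.\ on homogeneous elements.

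\textbf{Injectivity.} Here I would lean on the monomial basis of $\DD_n$ from Proposition~\ref{xxpro2.6}: monomials correspond bijectively to types, each recorded by its support family $\mathcal{F}$ of nontrivial (size $\geq 2$) components together with the edge count $m_B$ in each such block. Suppose a nonzero homogeneous $f=\sum_\tau c_\tau \underline{a}_\tau$ lies in $\ker\Phi$, and choose a term $\tau_0$ whose support $S(\tau_0)$ is minimal with respect to refinement among the supports of the terms appearing in $f$. Applying $\phi_\pi$ with $\pi=S(\tau_0)$, a term $\underline{a}_\tau$ has nonzero image only if all its edges lie inside blocks of $S(\tau_0)$, in which case $\phi_\pi(\underline{a}_\tau)=\prod_B t_B^{e_B(\tau)}$ with $e_B(\tau)$ the number of edges of $\tau$ in $B$; in particular such a $\tau$ has $S(\tau)$ refining $S(\tau_0)$. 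I would then show the only surviving term whose image equals $\mu_0:=\phi_\pi(\underline{a}_{\tau_0})$ is $\tau_0$ itself: matching the exponent of each $t_B$ forces $e_B(\tau)=m_B(\tau_0)\geq |B|-1\geq 1$, so $\tau$ has a nontrivial component inside every block of $S(\tau_0)$; minimality of $S(\tau_0)$ upgrades ``refines'' to $S(\tau)=S(\tau_0)$, whence the per-block edge counts agree and $\tau=\tau_0$ by the uniqueness of the monomial of a given type (equation~\eqref{E2.6.1}). Thus the coefficient of $\mu_0$ in $\phi_\pi(f)$ is exactly $c_{\tau_0}\neq 0$, contradicting $f\in\ker\Phi$. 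Hence $\Phi$ is injective and $\DD_n$ is reduced.

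\textbf{Main obstacle.} The hard part is precisely the injectivity step, and the difficulty is that no single $\phi_\pi$ separates the monomial basis: distinct types can collapse onto the same monomial of $R_\pi$ (for example, a clique on $B$ and a clique on a proper subset of $B$ with the same number of edges both map to a power of $t_B$). The device that resolves this is the choice of $\tau_0$ with finest support combined with Proposition~\ref{xxpro2.6}: specializing to $\pi=S(\tau_0)$ annihilates every term that is strictly coarser on some block, while finest-ness rules out any strictly finer competitor from appearing in $f$, isolating $\tau_0$ as the unique preimage of its leading monomial $\mu_0$. I expect the remaining verifications—well-definedness of each $\phi_\pi$, the homogeneity of $\Phi$, and the elementary bookkeeping relating edges, blocks, and exponents—to be routine once this comparison is in place.
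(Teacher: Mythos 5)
Your proposal is correct, but it takes a genuinely different route from the paper's own argument. The paper proves reducedness internally: given $0\neq f=\sum_i c_i\underline{a}_i$, it picks a summand $\underline{a}_{i_0}$ of minimal \emph{type} in the partial order of Definition~\ref{xxdef2.4} and shows that in $f^m$ the summand $c_{i_0}^m\underline{a}_{i_0}^m$ has strictly minimal type among all products $\underline{a}_{i_1}\cdots\underline{a}_{i_m}$, hence cannot cancel; nonvanishing of $f^m$ then rests on Lemma~\ref{xxlem2.5} (every monomial is nonzero) together with the fact that a monomial is determined by its type, \eqref{E2.6.1}. This uses both clauses of the type order, including the lexicographic comparison of edge counts in clause (b). You instead build an external faithful representation: the maps $\phi_\pi\colon a_{i,j}\mapsto t_B$ or $0$ are exactly the restriction maps to the closures of the strata $X_n(\pi)$ that the paper introduces only later, in the proof of Proposition~\ref{xxpro2.10} and Remark~\ref{xxrem2.12}, and injectivity of $\Phi=(\phi_\pi)_\pi$ into a product of polynomial rings gives reducedness for free. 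Your leading-term isolation via a support-minimal $\tau_0$ plays the role of the paper's minimal-type summand, but you only need the refinement order on supports --- exponent matching on the variables $t_B$ replaces the lexicographic clause entirely --- and you need only the type-uniqueness statement \eqref{E2.6.1} from the proof of Proposition~\ref{xxpro2.6}, not Lemma~\ref{xxlem2.5}, since nonvanishing is read off in a polynomial ring. (Your intermediate observation that $e_B(\tau)\geq |B|-1\geq 1$ is actually superfluous: minimality of $S(\tau_0)$ already forces $S(\tau)=S(\tau_0)$ for every term surviving $\phi_{S(\tau_0)}$.) What your approach buys is a concrete embedding of $\DD_n$ into functions on the strata, which makes the reduced scheme structure of $\Spec\DD_n$ explicit and anticipates Section~\ref{xxsec2}'s geometry; what the paper's approach buys is a self-contained combinatorial proof that never leaves $\DD_n$ and avoids checking well-definedness of any auxiliary homomorphisms.
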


\begin{proof} 
Let $0\neq f\in {\mathcal D}_n$ be a linear combination of 
monomials $\sum_{i} c_i \underline{a}_i$, for $c_i \in \kk$. 
Here $\underline{a}_i\in {\mathcal D}_n$ are linearly independent 
monomials in $a_{i,j}$s.
We want to show that 
$f^m\neq 0$ for all $m\geq 1$. We pick a summand $c_{i_0}\underline{a}_{i_0}\neq 0$ 
of $f$ such that $T(\underline{a}_{i_0})$ is one of minimal type according to the partial ordering of Definition~\ref{xxdef2.4}. We want to 
show that $f^m$ is a linear combination of $c_{i_1}\cdots c_{i_m}
\underline{a}_{i_1}\cdots \underline{a}_{i_m}$ with its summand $c_{i_0}^m \underline{a}_{i_0}^m \neq 0$ 
being of minimal type. This is enough to conclude
that $f^m\neq 0$ by Lemma \ref{xxlem2.5}.

\smallskip Consider the term $\underline{a}_{i_1}\cdots \underline{a}_{i_m}$ that is not
$\underline{a}_{i_0}^{m}$ and suppose by way of contradiction that $T(\underline{a}_{i_1}\cdots \underline{a}_{i_m}) < 
T(\underline{a}_{i_0}^m)$. Then $S(\underline{a}_{i_1}\cdots \underline{a}_{i_m}) < 
S(\underline{a}_{i_0}^m)=S(\underline{a}_{i_0})$ by condition~(a) of Definition~\ref{xxdef2.4}. This implies that 
$S(\underline{a}_{i_s})\leq S(\underline{a}_{i_0})$ for all $s$. Since $T(\underline{a}_{i_0})$ 
is minimal, we obtain that $S(\underline{a}_{i_s})=S(\underline{a}_{i_0})$. When all 
$\underline{a}_{i_s}$ have the same support, we use condition~(b) of Definition~\ref{xxdef2.4}
to obtain that $T(\underline{a}_{i_s}) < T(\underline{a}_{i_0})$ for some 
$\underline{a}_{i_s}\neq \underline{a}_{i_0}$, a contradiction. Thus $f^m$ is a linear 
combination of monomials with one summand $c_{i_0}^m \underline{a}_{i_0}^{m}$ of 
summand with minimal type. The assertion follows.

\smallskip
The semiprimeness of $\mathcal{D}_n$ now holds since $\mathcal{D}_n$ is commutative.
\end{proof}

We need the following construction for the next 
result. 

\begin{definitionlemma}
\label{xxdeflem2.9} 
Let $\{R_i\}_{i=1}^{d}$ be some ${\mathbb Z}_2$-graded algebras. The {\it super tensor product} between $R_i$ and $R_j$ is a ${\mathbb Z}_2$-graded algebra $$R_i \otimes_{\textnormal{super}} R_j,$$ which is equal to  $R_i \otimes R_j$ as ${\mathbb Z}_2$-graded algebras, where 
for homogeneous elements $f,f'\in R_i$ and $g,g'\in R_j$
we have
$$(f\otimes g)(f'\otimes g')=(-1)^{|g||f'|} (ff'\otimes gg').$$
 The ${\mathbb Z}_2$-grading of $R_i \otimes_{\textnormal{super}} R_j$ is $(R_i \otimes_{\textnormal{super}}  R_j)_0 =
(R_i)_0\otimes (R_j)_0 + (R_i)_1 \otimes (R_j)_1$
and 
$(R_i \otimes_{\textnormal{super}}  R_j)_1  = 
(R_i)_1\otimes (R_j)_0 + (R_i)_0 \otimes (R_j)_1.$
Moreover, $R_i\otimes_{\textnormal{super}} R_j$ is a graded twist of $R_i\otimes R_j$ in the sense 
of \cite{Zh}.
\qed

\end{definitionlemma}

Recall that $\CC_n$ is the central subalgebra of $\EE_n^{!}$
generated by $a_{i,j}:=y^2_{i,j}$ for all $i,j$.

\begin{proposition}[Etingof]
\label{xxpro2.10}
There is a natural algebra isomorphism
${\mathcal D}_n\cong {\mathcal C}_n$.
\end{proposition}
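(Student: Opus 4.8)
The plan is to reduce the claim to an equality of Hilbert series. By Lemma~\ref{xxlem1.4} the defining relations of $\DD_n$ hold among the central squares $a_{i,j}=y_{i,j}^2$, so there is a graded surjection $\phi\colon \DD_n\twoheadrightarrow \CC_n$ (grade $\CC_n$ so that each $a_{i,j}$ has degree $1$). Thus $\phi$ is an isomorphism if and only if $H_{\CC_n}(t)=H_{\DD_n}(t)$, and since $\phi$ is onto it suffices to prove the lower bound $\dim_\kk(\CC_n)_\ell\ge \dim_\kk(\DD_n)_\ell$ for all $\ell$, with $H_{\DD_n}$ already known from Proposition~\ref{xxpro2.6}. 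I emphasize at the outset that one cannot simply treat the spanning monomials of Lemma~\ref{xxlem1.3} as a basis: the reduction system for $\EE_n^!$ is not confluent (already $a_{1,2}a_{2,3}=a_{1,2}a_{1,3}$ by Lemma~\ref{xxlem1.4} forces the two distinct ordered monomials $y_{1,2}^2y_{2,3}^2$ and $y_{1,2}^2y_{1,3}^2$ to coincide), so genuine linear independence must be extracted by another device.

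That device is the super tensor product of Definition-Lemma~\ref{xxdeflem2.9}, organized by a support grading. First I would observe that both defining relations of $\EE_n^!$ are homogeneous for the support $S(\underline y)\in\Pi_n$ of a monomial $\underline y$ (the connected components of $G(\underline y)$), since each relation equates two monomials of equal support and the support of a product is the join of the supports of its factors; hence $\EE_n^!=\bigoplus_{\pi\in\Pi_n}\EE_n^![\pi]$ with $\EE_n^![\pi]\cdot\EE_n^![\pi']\subseteq\EE_n^![\pi\vee\pi']$, and $\CC_n=\bigoplus_\pi \CC_n[\pi]$ inherits this decomposition. Fixing $\pi=\{B_1,\dots,B_d\}$, the last relation of Definition-Lemma~\ref{xxdeflem0.2} makes edges in different blocks anticommute, so a Diamond Lemma argument (as in Lemma~\ref{xxlem1.3}) identifies the subalgebra generated by the within-block edges with the super tensor product $\EE^!(B_1)\otimes_{\textnormal{super}}\cdots\otimes_{\textnormal{super}}\EE^!(B_d)$, where $\EE^!(B_r)\cong\EE^!_{|B_r|}$; inside it, $\CC_n[\pi]$ is the corresponding tensor product of the connected-support parts $\CC(B_r)^{\mathrm{conn}}$. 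Since by Definition-Lemma~\ref{xxdeflem2.9} a super tensor product is a graded twist of the ordinary tensor product, the underlying graded vector spaces agree and Hilbert series multiply, reducing everything to a single connected block.

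Next I would settle the connected block $B=[m]$ with $m\ge 2$. The upper bound $\dim_\kk \CC(B)^{\mathrm{conn}}_\ell\le 1$ is immediate, since the connected partition $\{[m]\}$ contributes exactly $t^{m-1}/(1-t)$ to $H_{\DD_m}$ by Proposition~\ref{xxpro2.6} and $\phi$ is support-graded. For the matching lower bound I must show the single candidate vector $\underline a([m],\ell)=y_{1,2}^{2(\ell-m+2)}\,y_{1,3}^2\cdots y_{1,m}^2$ is nonzero in $\EE_m^!$ for every $\ell\ge m-1$. The key trick is to map $\EE_m^!$ onto $Q:=\EE_m^!/(y_{i,j}\mid i\neq 1)$: every defining relation of $\EE_m^!$ involves an edge avoiding the vertex $1$ (a triangle relation contains an edge $(j,k)$ with $j,k\neq1$, and a disjoint relation cannot involve two edges both through $1$), so all relations die in $Q$ and $Q\cong\kk\langle y_{1,2},\dots,y_{1,m}\rangle$ is free. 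As $\underline a([m],\ell)$ maps to a nonzero monomial in this free algebra, it is nonzero in $\EE_m^!$, and so $\CC(B)^{\mathrm{conn}}$ has Hilbert series $t^{m-1}/(1-t)$.

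Assembling the pieces gives $H_{\CC_n}(t)=\sum_{\pi\in\Pi_n}\prod_{|B_r|\ge 2} t^{|B_r|-1}/(1-t)$, which is precisely the expression identified with $H_{\DD_n}(t)$ in Proposition~\ref{xxpro2.6} (via $\sum_{|B_r|\ge2}(|B_r|-1)=\textnormal{rk}(\pi)$ and the $\#_{\ge 2}(\pi)$ denominator factors); hence $H_{\CC_n}=H_{\DD_n}$ and $\phi$ is an isomorphism. I expect the main obstacle to be the connected-block lower bound, i.e.\ the nonvanishing of $\underline a([m],\ell)$ in $\EE_m^!$, which the free quotient $Q$ resolves cleanly; the remaining effort is the bookkeeping that turns the disjointness relation and the support grading into a genuine product over blocks through the graded-twist property of the super tensor product.
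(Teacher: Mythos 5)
Your proof collapses at its self-declared key step: the quotient $Q=\EE_m^!/(y_{i,j}\mid 1\notin\{i,j\})$ is \emph{not} a free algebra. The triangle relations of Definition-Lemma~\ref{xxdeflem0.2} run over \emph{all} ordered triples of distinct indices, and for a triple with middle vertex $1$ the relation reads $y_{i,1}y_{1,k}+y_{1,k}y_{i,k}=0$. Its first monomial uses only edges through the vertex $1$; only the second monomial contains the edge $(i,k)$ avoiding $1$. So in $Q$ the relation does not die but degenerates to $y_{1,i}y_{1,k}=0$ for all distinct $i,k\neq 1$ (your parenthetical is the error: a triangle relation does contain an edge avoiding $1$, but that edge need not occur in \emph{both} of its two monomials). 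Hence $Q\cong\kk\langle y_{1,2},\dots,y_{1,m}\rangle/(y_{1,i}y_{1,k}\,:\, i\neq k)$ is spanned by powers of single generators, and your witness $\underline{a}([m],\ell)=y_{1,2}^{2(\ell-m+2)}y_{1,3}^2\cdots y_{1,m}^2$ maps to $0$ in $Q$ for every $m\geq 3$, since it contains the factor $y_{1,2}y_{1,3}$. (Already for $m=3$: from $y_{23}y_{12}+y_{12}y_{13}=0$ one gets $y_{12}y_{13}=0$ in $Q$.) Thus the lower bound $\dim_\kk\CC(B)^{\mathrm{conn}}_\ell\geq 1$ --- the entire content of the connected case, and hence the equality $H_{\CC_n}=H_{\DD_n}$ --- is unproven. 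Note that nonvanishing of these monomials in $\EE_n^!$ is exactly what the paper deduces (Corollary~\ref{xxcor2.11}) \emph{from} Proposition~\ref{xxpro2.10}; it is the hard point, not a lemma one gets cheaply.

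There is a second, structural weakness: your identification of the within-block subalgebra of $\EE_n^!$ with a super tensor product of smaller $\EE^!$'s is asserted via ``a Diamond Lemma argument as in Lemma~\ref{xxlem1.3},'' but such arguments only produce spanning sets (as you yourself note when rejecting Lemma~\ref{xxlem1.3} as a basis), so they give a surjection from the super tensor product onto a \emph{subalgebra} of $\EE_n^!$; its injectivity is essentially the statement being proved. The paper sidesteps both traps by only mapping \emph{out of} $\EE_n^!$: reducedness of $\DD_n$ (Proposition~\ref{xxpro2.8}) reduces injectivity of $\DD_n\to\CC_n$ to nonvanishing of the quotients $\EE_n^!(z)$ for $z\in\Spec\DD_n$; killing all cross-block generators exhibits a further quotient $\EE_n^!(z)/J$ as a super tensor product of quotient algebras $R_r$ (for quotients, nonvanishing of each factor suffices, by Definition-Lemma~\ref{xxdeflem2.9}); and the connected block is settled not by a free quotient but by the isomorphism $\EE_n^!(1)\cong\kk(2\cdot S_n^+)/(w+1)$ with the Schur cover of $S_n$, an algebra of dimension $n!$. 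Your Hilbert-series framing could in principle replace the reducedness argument, but only if you substitute a nonvanishing input of that strength for the connected block; the quotient $Q$ cannot supply it.
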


\begin{proof} 
We achieve the result by studying the points of $$X_n:=\Spec {\mathcal D}_n
\subseteq \Bbbk^{{n \choose 2}}.$$
Let us set some notation used in this proof. Given a point $z =(z_{i,j})_{1\leq i<j \leq n}\in X_n$, let $G(z)$ be the graph on 
the vertex set $[n]$ where $i$ and $j$ are connected 
if and only if  $z_{i,j}\ne 0$. Let $\{G_r(z)\}_{r=1}^d$ be
the set of connected components of $G(z)$. The relations 
$$z_{i,j}(z_{i,k}-z_{j,k})=0$$
(derived from \eqref{E2.4.1}) imply that, for  some $x_r\in \Bbbk^{\times}$,
\begin{equation} 
\label{E2.10.1}\tag{E2.10.1}
z_{i,j} = \begin{cases}
 \text{$x_r$}, &\text{ if 
$i,j$ belong to the same component $G_r(z)$},\\ 
0, &\text{ otherwise}
\end{cases}
\end{equation}
 by definition. Thus each connected component
$G_r(z)$ is a complete graph. 

\smallskip  On the other hand, for each set
partition $\pi = \{ B_1, \dots, B_d \} \in \Pi_n$ we 
can construct a stratum 
$$X_n(\pi):= \{z \in  X_n ~|~ G_r(z) \text{ is the complete graph } K(B_r)  ~\forall r\}.$$
It is clear that $X_n(\pi)$ is a torus of dimension equal to 
$\#_2(\pi)$.

\smallskip
For the rest of the proof, recall that by Lemma~\ref{xxlem1.8}(1) there is a natural algebra surjection from
$\phi: {\mathcal D}_n\to {\mathcal C}_n$. So, the assertion is equivalent
to the statement that the map $\phi$ is injective. Let $I$ be the kernel of $\phi$
and $\mathcal{Z}(I) \subseteq X_n$ be the zero set of $I$. Since ${\mathcal D}_n$ is reduced [Proposition~\ref{xxpro2.8}(1)], it suffices to show that $\mathcal{Z}(I) =X_n$. 

\smallskip Note that $I$ is also equal to the kernel of the composition 
$({\mathcal C}_n \hookrightarrow {\mathcal E}_n^!) \circ \phi$, 
where the inclusion holds by Lemma~\ref{xxlem1.8}(1). 
Take a point $z \in X_n$ and consider the corresponding 
maximal ideal $\mathfrak{m}_z$ of ${\mathcal D}_n$ and 
form the algebra
$${\mathcal E}_n^!(z) := {\mathcal E}_n^! \otimes_{{\mathcal D}_n} 
{\mathcal D}_n/(\mathfrak{m}_z).$$
Then it suffices to show that ${\mathcal E}_n^!(z) \neq 0$ 
for all $z \in X_n$. Namely, this condition is equivalent 
to the condition that $\mathfrak{m}_z$ contains $I$ for 
all $z \in X_n$, which in turn is equivalent to 
$I \subset \bigcap_{z \in X_n} \mathfrak{m}_z$; here, 
$ \bigcap_{z \in X_n} \mathfrak{m}_z = 0$ since 
${\mathcal D}_n$ is reduced.

\smallskip Let $z$ be any point in $X_n$. Then it  
belongs to a stratum $X_n(\pi)$ for some 
set partition $\pi = \{B_1, \dots, B_{\# \pi}\}$ 
of $[n]$, where $B_r = G_r(z)$. Then, 
${\mathcal E}_n^!(z)$ is defined as the quotient 
algebra 
$${\mathcal E}_n^!(z) 
= {\mathcal E}_n^!/\left(\{y_{i,j}^2 - x_r\}_{i,j  \in G_r(z)},
\; \{y_{i,j}^2\}_{i \text{ or } j ~ \not \in G_r(z)}\right), 
\quad \text{for $x_r$ in  \eqref{E2.10.1}}.$$
Let us replace the last relations by an even stronger relation: 
Take the ideal 
$$J = \left(y_{i,j}\right)_{\text{$i,j$ in different blocks of $\pi$}}.$$
Then the resulting quotient algebra ${\mathcal E}_n^!(z)/J$ 
is a super tensor product of algebras, denoted by $R_r$, 
associated to 
components $B_r$ with $|B_r| \geq 2$. Now by 
Definition-Lemma \ref{xxdeflem2.9}, 
it suffices to show that each $R_r$ is nonzero, or equivalently, 
to show that ${\mathcal E}_n^!(z)$ is nonzero when $\# \pi = 1$.

\smallskip Without loss of generality, we can first assume that 
$\# \pi = 1$ and, by 
replacing $y_{i,j}$ by $x_1^{-1/2}y_{i,j}$, we may further 
assume that $x_1=1$. Now  $z_{i,j}=1$ for all $i\neq j$ 
by~\eqref{E2.10.1}. So it suffices to show that the algebra
${\mathcal E}_n^!(1)$ is nonzero.

\smallskip For $n=3$, one can check directly or by using a 
computer algebra program that the algebra
$${\mathcal E}_3^!(1) = 
\frac{\kk \langle y_{1,2}, \; y_{1,3}, \; y_{2,3} \rangle}{ \left( 
\begin{array}{c} 
y_{1,2}^2 - 1, \quad y_{1,3}^2 - 1, \quad y_{2,3}^2- 1, \\
y_{1,2}y_{2,3} + y_{2,3}y_{1,3}, \quad
y_{1,2}y_{2,3} + y_{1,3}y_{1,2}, \\
y_{2,3}y_{1,2} + y_{1,3}y_{2,3}, \quad
y_{2,3}y_{1,2} + y_{1,2}y_{1,3}
\end{array}\right)}$$
is nonzero. For $n \geq 4$, we can check that the algebra 
${\mathcal E}_n^!(1)$ is isomorphic to a factor of the group 
algebra of a Schur cover $2\cdot S_n^+$ of the symmetric 
group $S_n$. Namely, the group 
$2\cdot S_n^+$ has a presentation with generators 
$w, s_1, \dots, s_{n-1}$ and relations
\[
\begin{array}{ll}
w^2 = 1, &\\
\notag
w s_i = s_i w \quad \text{ and } \quad s_i ^2 = 1 \; &\forall \; 1 \leq  i \leq  n-1,\\
\notag
s_{i+1}s_{i}s_{i+1} = s_{i}s_{i+1}s_{i}, \; &\forall\; 1 \leq i \leq n-2,\\
\notag
s_{j}s_{i} = s_{i}s_{j}w, \; &\forall \; 1 \leq i < j \leq n-1, ~|i-j| \geq 2;
\end{array}
\]
see \cite[Chapter~12]{Karp}. Now by identifying $w = -1$ 
and $s_i=y_{i,i+1}$ we get $${\mathcal E}_n^!(1)\cong \Bbbk (2\cdot S_n^+)/(w+1).$$
Since $|2\cdot S_n^+| = 2n!$ and the order of $w$ is 2, we have that
$\Bbbk (2\cdot S_n^+)/(w+1)$ has dimension $n!$, so ${\mathcal E}_n^!(1)$ is not zero, as desired.
\end{proof}

One quick consequence of the proposition above is the following.

\begin{corollary}
\label{xxcor2.11}
Every monomial in $\EE_{n}^{!}$ is nonzero.
\end{corollary}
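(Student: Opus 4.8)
The plan is to exploit the central squares $a_{i,j} = y_{i,j}^2$ together with the fact, now available from Proposition~\ref{xxpro2.10} and Lemma~\ref{xxlem2.5}, that every monomial in the commutative subalgebra $\CC_n \cong \DD_n$ is nonzero. The idea is to show that an arbitrary monomial of $\EE_n^!$ cannot vanish by pairing it against its own reversal to produce a nonzero central monomial, and then using that a vanishing factor would force the product to vanish.

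Concretely, let $\underline{y} = y_{i_1,j_1}\cdots y_{i_w,j_w}$ be any monomial in the generators (using $y_{j,i} = -y_{i,j}$ as needed, so $\underline{y}$ is at worst $\pm$ a standard monomial). First I would form the product of $\underline{y}$ with its reversed word $y_{i_w,j_w}\cdots y_{i_1,j_1}$. Peeling off the two innermost letters gives the central square $a_{i_w,j_w}$ by definition and by Lemma~\ref{xxlem1.4}; since each $a_{i,j}$ is central, I can extract it from the word and repeat, collapsing the whole product one matched pair at a time down to the central monomial $\prod_{s=1}^w a_{i_s,j_s} \in \CC_n$.

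The second step is to observe that this central monomial is nonzero: it is a monomial in $\CC_n$, and $\CC_n \cong \DD_n$ by Proposition~\ref{xxpro2.10}, while every monomial of $\DD_n$ is nonzero by Lemma~\ref{xxlem2.5}; the inclusion $\CC_n \hookrightarrow \EE_n^!$ of Lemma~\ref{xxlem1.8}(1) then shows it is nonzero in $\EE_n^!$ as well. Since a vanishing $\underline{y}$ would force the reversed product, hence $\prod_{s=1}^w a_{i_s,j_s}$, to vanish, we conclude $\underline{y}\neq 0$.

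I do not expect a serious obstacle, as the argument is short once the earlier results are in hand. The two points requiring care are that the reversal trick genuinely collapses to a single $\CC_n$-monomial, which relies essentially on the centrality of the $a_{i,j}$ so that each matched pair can be pulled out without disturbing the remaining word, and that the nonvanishing of $\CC_n$-monomials is really the crux: a priori a product of the $a_{i,j}$ inside $\EE_n^!$ could collapse, and it is precisely the identification $\CC_n\cong\DD_n$ of Proposition~\ref{xxpro2.10} (feeding into Lemma~\ref{xxlem2.5}) that rules this out.
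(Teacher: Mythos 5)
Your proposal is correct and is essentially identical to the paper's own proof: the paper likewise multiplies the monomial $y_{i_1,j_1}\cdots y_{i_w,j_w}$ by its reversal, uses centrality of the $a_{i,j}=y_{i,j}^2$ to collapse the product to $y_{i_1,j_1}^2\cdots y_{i_w,j_w}^2$, and concludes this is nonzero via the injectivity of $\DD_n\to\EE_n^!$ (Lemma~\ref{xxlem1.8}(1) together with Proposition~\ref{xxpro2.10}) and Lemma~\ref{xxlem2.5}. Your two ``points requiring care'' are exactly the ingredients the paper invokes, so there is nothing to add.
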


\begin{proof}
Since the natural map $\DD_n\to \EE_n^{!}$
is injective [Lemma~\ref{xxlem1.8}(1) and Proposition~\ref{xxpro2.10}], every element of the form $y_{i_1,j_1}^2 y_{i_2,j_2}^2\cdots 
y_{i_w,j_w}^2$ is nonzero by Lemma \ref{xxlem2.5}. 
Starting from any monomial  $y_{i_1,j_1}y_{i_2,j_2}\cdots
y_{i_w,j_w}$ in $\EE_n^{!}$, one can see that 
$$(y_{i_1,j_1}y_{i_2,j_2}\cdots y_{i_w,j_w})
(y_{i_w,j_w}\cdots y_{i_2,j_2}y_{i_1,j_1})
=y_{i_1,j_1}^2 y_{i_2,j_2}^2\cdots 
y_{i_w,j_w}^2\neq 0$$
in $\EE_n^{!}$. Therefore 
$y_{i_1,j_1}y_{i_2,j_2}\cdots y_{i_w,j_w}\neq 0$.
\end{proof}

Finally, we recover a bound on the GK-dimension of $\mathcal{E}_n^!$ via the proof of Proposition~\ref{xxpro2.10}.

\begin{remark}[Etingof] \label{xxrem2.12} 
We refer to the notation of the proof of Proposition~\ref{xxpro2.10}.
By taking limits, one sees that one stratum $X_n(\pi')$ 
is in the closure of another stratum $X_n(\pi)$ if and only if  
$\pi'$ is obtained from $\pi$ by cutting some blocks of $\pi$ 
into single points. Thus, the irreducible components of 
$X_n$ are closures $\overline{X_n(\pi)}$ of strata with 
maximal set partitions $\pi$, i.e., those with at most 
one $1$-point block. So, the dimension of the stratum 
$X_n(\pi)$ is $\#_{\geq 2}(\pi)$, which is at most $\lfloor n/2\rfloor$. We then recover that $\text{GKdim}{\mathcal E}_n^! = \text{GKdim}{\mathcal D}_n \leq \lfloor n/2\rfloor$; see Lemma~\ref{xxlem1.8}(4) and Theorem~\ref{xxthm0.3}(2).
\end{remark}

%%%%%%%%%%%%%                        Section  3 %%%%%%%%%%%%%%%%%%%%%%%
%%%%%%%%%%%%%%%%%%%%%%%%%%%%%%%%%%%%%%%%%%%%%%%%%%%%%%%%%%%%%%%%%%%%%%%%%%
%%%%%%%%%%%%%%%%%%%%%%%%%%%%%%%%%%%%%%%%%%%%%%%%%%%%%%%%%%%%%%%%%%%%%%%%%%%%%%%%%%%%%%%%%%%%%%%%%%%%%%%%%%%%%%%
%%%%%%%%%%%%%%%%%%%%%%%%%%%%%%%%%%%%%%%%%%%%%%%%%%%%%%%%%%%%%%%%%%%%%%%%%%%%%%%%%%%%%%%%%%%%%%%%%%%%%%%%%%%%%%%
%%%%%%%%%%%%%%%%%%%%%%%%%%%%%%%%%%%%%%%%%%%%%%%%%%%%%%%%%%%%%%%%%%%%%%%%%%%%%%%%%%%%%%%%%%%%%%%%%%%%%%%%%%%%%%%
\bigskip

\section{Homological preliminaries and Proof of 
Theorem~\ref{xxthm0.6}\\ and Corollary~\ref{xxcor0.8}}
\label{xxsec3}

The goal of this section is to establish 
Theorem~\ref{xxthm0.6} and Corollary~\ref{xxcor0.8} 
on various homological properties of $\EE_n^!$. We 
begin by recalling these homological notions for 
general connected $\mathbb{N}$-graded algebras, and 
show how these conditions are related in Figure~\ref{fig:hom}. 
Then, we  present preliminary results for $\EE_n^!$, 
and we end the section by establishing the proofs of 
Theorem~\ref{xxthm0.6} and Corollary~\ref{xxcor0.8}.

\smallskip

We begin by presenting homological conditions on connected 
$\mathbb{N}$-graded (c.g.), locally-finite $\kk$-algebras  
that generalize the regularity, Gorenstein, Cohen-Macaulay, 
and other favorable conditions on commutative local algebras. 
Some of these homological conditions can be defined for 
$\kk$-algebras that are not necessarily connected 
$\mathbb{N}$-graded nor locally-finite, and we refer 
the reader to the references provided below for more information.

\begin{definition} 
\label{xxdef3.1} 
\cite[Intro.]{AS} \cite[page~674, Section~4]{vdB} 
\cite[Intro., Def.~2.1 and~5.8]{Lev}  
Let $A$ be a connected $\mathbb{N}$-graded (c.g.), 
locally-finite $\kk$-algebra. All $A$-modules and 
Ext-groups below will be graded.
\smallskip

\begin{enumerate}
\item[(1)] $A$ is called {\it Artin-Schelter-Gorenstein} 
(or {\it AS-Gorenstein}) (of dimension $d$) if 
the following conditions hold:
\begin{enumerate}
\item[(a)]
$A$ has finite injective dimension $d$ as both a left and right $A$-module,
\item[(b)]
$\Ext^i_A(\Bbbk, A) = \Ext^i_{A^{op}} (\Bbbk, A) = 0$ for all 
$i\neq d$, and
\item[(c)] 
$\Ext^d_A(\Bbbk, A)\cong \Bbbk(l)$
and $\Ext^d_{A^{op}} (\Bbbk, A)\cong
\Bbbk(l)$ for some integer $l$.
\end{enumerate}

\medskip 

\item[(2)]  $A$ is called {\it Artin-Schelter-regular} 
(or {\it AS-regular}) of dimension $d$ if $A$ is 
AS-Gorenstein of finite global dimension $d$.
(We do not assume finite 
GK-dimension as was  introduced originally in \cite{AS}. 
See \cite{JZ}, for instance.)

\medskip 

\item[(3)] For $\mathfrak{m}:= A_{\geq 1}$, a graded 
maximal ideal of $A$, the {\it $i$-th local cohomology module} 
for a left (or right) 
$A$-module $M$ is defined to be
$$H_{\mathfrak{m}}^i(M) := \lim_{\overset{\longrightarrow}{\text{\tiny $m$}}} 
\Ext^i_A(A/A_{\geq m}, ~M).$$

\smallskip 

\item[(4)] $A$ is {\it Artin-Schelter-Cohen-Macaulay} 
(or {\it AS-Cohen-Macaulay}, {\it AS-CM}) if there exists 
an integer $d$ such that $H_{\mathfrak{m}}^i(A) = H_{\mathfrak{m}^{op}}^i(A) = 0$ 
for all $i \neq d$.

\medskip 

\item[(5)] The {\it grade number} of a left (or right) 
$A$-module $M$ is defined to be
$$j_A(M) := \inf\{i~|~\Ext_A^i(M,A)\neq0\}\in {\mathbb N}\cup\{+\infty\}.$$ 
Write $j(M)$ for $j_A(M)$ if $A$ is understood. Note that 
$j_A(0)=+\infty$.

\medskip 

\item[(6)] In the case when $A$ is Noetherian, a left (right) $A$-module $M$ 
satisfies the {\it Auslander condition} if for any $q\geq0$ we get that
$j_A(N)\geq q$ for all right (left) 
$A$-submodules $N$ of $\Ext_A^q(M,A)$. 

\medskip

\item[(7)] A Noetherian algebra $A$ is {\it Auslander-Gorenstein}  of 
dimension $d$ if $A$ has finite injective dimension $d$ as both a left 
and right $A$-module, and if every 
finitely-generated left and right $A$-module satisfies the Auslander condition.

\medskip 

\item[(8)] A Noetherian algebra $A$ is {\it Auslander-regular}  
of dimension $d$ if $A$ is Auslander-Gorenstein of finite global dimension $d$.

\medskip 

\item[(9)] A Noetherian algebra $A$ is {\it Cohen-Macaulay} ({\it CM})  
if $\GKdim(A)=d\in{\mathbb N},$ and if $$j(M)+\GKdim(M)=d$$ 
for every finitely-generated nonzero left (or right) $A$-module $M$. 

\medskip 

\item[(10)] Continuing (9), we have {\it inequality of the grade} ({\it IG}) if the weaker condition that $j(M)+\GKdim(M) \geq d$ is satisfied.

\medskip 

\item[(11)] The {\it depth} of a left (or right) $A$-module $M$ is defined to be
$$\depth M:=\inf \{i\mid \Ext^i_A(\Bbbk, M)\neq 0\}.$$
If $\Ext^i_{A}(\Bbbk, M)=0$ for all $i$, then $\depth M=\infty$.

\end{enumerate}
\end{definition}

The hierarchy of homological conditions on connected $\mathbb{N}$-graded, 
locally-finite $\kk$-algebra also involve certain factors of regular 
algebras, namely, the complete intersections discussed below. This 
motivated by the fact that in the context of commutative local rings,  
for  a Noetherian ring $R$ we have that:
\begin{itemize}
\item If $R$ is regular, then $R$ is a complete intersection; and 
\item If $R$ is a complete intersection, then $R$ is Gorenstein, 
and hence in turn, is Cohen-Macaulay.
\end{itemize} 
See, for instance, work of Bass \cite{Bas} for the details of the 
commutative terminology; Definitions~\ref{xxdef3.1} and~\ref{xxdef3.2} 
are a noncommutative generalization of these concepts. 

\begin{definition}  \label{xxdef3.2} \cite{GW} \cite[Definition 1.3]{KKZ} 
\begin{enumerate}
\item[(1)] An element $\Omega$ of a ring $A$ is called {\it normal} 
if $\Omega A=A \Omega$, and is called {\it regular} if $\Omega$ is 
a nonzero divisor in $A$.

\medskip

\item[(2)] We say a collection of elements 
$\{\Omega_1,\dots, \Omega_t\}$ of a ring $A$ is a {\it normal
sequence} in $A$ if $\deg \Omega_i>0$ for all $i$, and if
$\Omega_i$ is a normal element in the
factor ring $A/(\Omega_1,\dots, \Omega_{i-1})$
for all $i$. If, further, each $\Omega_i$ is a regular 
element in the factor ring $A/(\Omega_1,\dots, \Omega_{i-1})$, then
we say that $\{\Omega_1,\dots, \Omega_t\}$ is a {\it regular normal
sequence} in $A$.

\medskip

\item[(3)] A c.g. finitely-generated $\kk$-algebra $A$ is called a 
{\it classical complete 
intersection} ({\it cci}) if there is a c.g. Noetherian AS-regular 
algebra $C$ and a regular 
sequence of normalizing elements $\{\Omega_1,\dots, \Omega_t\}$
of positive degree such that
$$A\cong C/(\Omega_1,\dots, \Omega_t).$$

\smallskip

\item[(4)] In the case when $A$ is a cci, the {\it cci number} of 
$A$ is defined to be
$$cci(A) = \min\{t \mid A \cong C/(\Omega_1,\dots, \Omega_t)\}.$$
\end{enumerate}
\end{definition}

\smallskip

Now we illustrate the connections amongst the terminology 
above  in the case when $A$ is a connected $\mathbb{N}$-graded, 
locally-finite $\kk$-algebra. One fact that is used in the 
hypotheses of some of the references below is that a Noetherian, 
PI algebra is {\it fully bounded Noetherian (FBN)} 
\cite[Definition~6.4.7, Corollary~13.6.6]{MR}.

\smallskip

\bigskip

{\small \begin{figure}[h!] 
\[
\xymatrix{
\text{\framebox{Ausl.-reg.}} 
\ar@/^{1.5pc}/@{=>}[rr]^{\hspace{.1in} \substack{\hspace{-.13in}\text{by def: } \text{gldim$< \hspace{-.05in}\infty \Rightarrow$ injdim$< \hspace{-.05in}\infty$}}}  
\ar@{=>}[dd]
^{\substack{\textcolor{white}{.}\\\textcolor{white}{.}\\\text{\cite[Thm~6.3]{Lev}}}
}
 && \text{\framebox{Ausl.-Goren.}} \ar@{=>}[dd]
 ^{\substack{\text{\cite[Thm~6.3]{Lev}}}
 }
 &  \text{\framebox{IG}}\ar@{<=>}[rrd] \ar@{<=>}[dd]^{\hspace{.13in}\substack{\text{\cite[Sec~5]{Jo1}} \\ \text{\cite[T2.2]{Jo2}}   \\ 
 A \text{ Noeth, PI}}}&& \text{\framebox{CM}} \ar@{=>}[ll]_{\text{by def}} \\
 &&&&&\text{\framebox{{\Large$\substack{\depth A\\ =\\ \GKdim A}$}}} \ar@{<=>}[lld]\\
\text{\framebox{AS-reg.}} \ar@/_{1.5pc}/@{=>}[rr]_{\hspace{-.25in} \substack{\hspace{.3in}\text{by def:}\text{ gldim$< \hspace{-.05in}\infty \Rightarrow$ injdim$< \hspace{-.05in}\infty$}}}
\ar@{=>}[r]^{\hspace{.1in}\substack{\text{by def:}\\ (C = A,\\ \{\underline{\Omega}\} = \emptyset)}} 
&\text{\framebox{cci}} \ar@/^1.5pc/@{=>}[ruu]^{\substack{\text{\cite[Thm~3.6]{Lev}\hspace{-.2in}}\\ C \text{ Ausl.-Gor. \hspace{-.25in}} 
\\ \textcolor{white}{.}\\ \textcolor{white}{.}\\ \textcolor{white}{.}
}} \ar@/^{.7pc}/@{=>}[r]^{\substack{\text{\cite[Thm~3.4]{KKZ}}\\A = R^G
\text{ inv. ring}}} &  \text{\framebox{AS-Goren.}} \ar@/_{1pc}/@{=>}[r]_{\text{\cite[Lem~2.1]{JZ}}}&  \text{\framebox{AS-CM}}&&
}
\]
\caption{Homological conditions for c.g. locally-finite algebras $A$ [Def.~\ref{xxdef3.1}]} \label{fig:hom}
\end{figure}
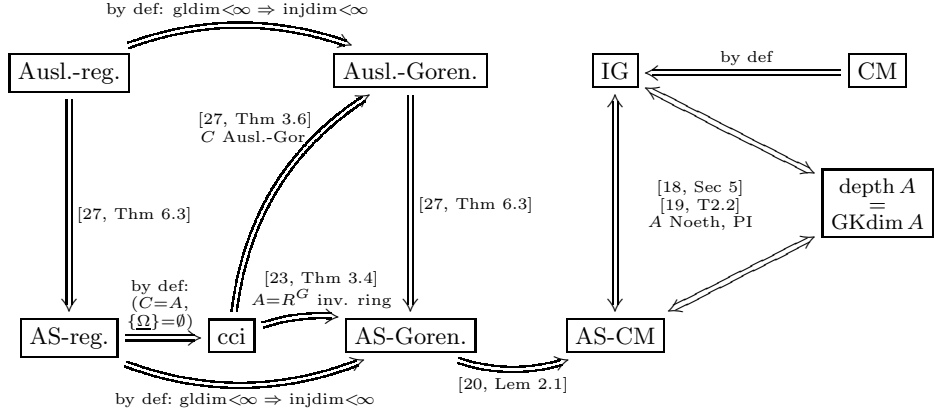 }

\smallskip

\bigskip

The following result will be of use.

\begin{proposition} 
\label{xxpro3.3} \cite[Lemma~5.7]{Lev} \cite[Theorem~2.5]{CV} 
Take $C$ a connected $\mathbb{N}$-graded Noetherian algebra 
with finite GK-dimension.
\begin{enumerate}
\item[(1)] Let $\Omega$ be a regular element of $C$ of 
positive degree, then $\GKdim(C/(\Omega)) = \GKdim(C) - 1$.

\smallskip

\item[(2)] Let $m=\GKdim (C)$.
Suppose that $C$ is Auslander-Gorenstein 
and Cohen-Macaulay, and that there is a normalizing sequence 
$\{\Omega_1, \dots, \Omega_{m}\}$ of $C$ with each 
element $\Omega_i$ homogeneous of positive degree. 
Then, $\{\Omega_1, \dots, \Omega_{m}\}$ is a regular 
sequence in C if and only if  $C/(\Omega_1, \dots, \Omega_{m})$  
is finite-dimensional (has GK-dimension 0). 
In this case, for each $t = 1, \dots, m$, we get that  
$\{\Omega_1, \dots, \Omega_t\}$ is a regular sequence of $C$ 
if and only if $$\GKdim(C/(\Omega_1, \dots, \Omega_t)) = m - t.$$ 

\vspace{-.25in}
\qed
\end{enumerate}
\end{proposition}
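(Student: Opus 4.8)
The plan is to prove the two parts in order, using part~(1) as the engine for the inductive argument in part~(2). For part~(1), since $\Omega$ is a regular normal element of degree $d:=\deg\Omega$, multiplication by $\Omega$ yields a graded short exact sequence
$$0 \longrightarrow C(-d) \xrightarrow{\ \cdot\,\Omega\ } C \longrightarrow C/(\Omega) \longrightarrow 0,$$
so that $H_{C/(\Omega)}(t) = (1-t^{d})\,H_{C}(t)$. Because $C$ is connected $\mathbb{N}$-graded, Noetherian, and of finite GK-dimension, its Hilbert series is rational, and the GK-dimension of a finitely generated graded module equals the order of the pole of its Hilbert series at $t=1$. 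Factoring $1-t^{d} = (1-t)(1+t+\cdots+t^{d-1})$ and noting that the second factor is nonzero at $t=1$, multiplication by $1-t^{d}$ lowers the pole order by exactly one; hence $\GKdim(C/(\Omega)) = \GKdim C - 1$.

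For part~(2) the strategy is an induction on $m$ resting on two lemmas together with one general GK-dimension inequality. The first is a \emph{grade lemma}: for a normal element $\Omega$ of positive degree, a graded homomorphism $C/(\Omega)\to C$ is determined by the image of $\bar1$, which must satisfy $\Omega\cdot c=0$; thus $\Hom_{C}(C/(\Omega),C)\cong \rann(\Omega)$ up to a degree shift, so $j(C/(\Omega))=0$ precisely when $\Omega$ is a zero-divisor. Combining this with the Cohen--Macaulay identity $j(M)+\GKdim(M)=m$ gives the dichotomy
$$\Omega \text{ regular} \iff \GKdim(C/(\Omega))=m-1, \qquad \Omega \text{ a zero-divisor} \implies \GKdim(C/(\Omega))=m.$$
The second is a \emph{descent lemma}: if $C$ is Auslander--Gorenstein and Cohen--Macaulay of dimension $m$ and $\Omega$ is a regular normal element of positive degree, then $C/(\Omega)$ is again Auslander--Gorenstein and Cohen--Macaulay of dimension $m-1$. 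Finally I record the general bound $\GKdim C \le \GKdim(C/(\Omega))+1$ for any normal $\Omega$ of positive degree, proved by filtering $C$ by the powers $(\Omega)^{i}$, each subquotient $(\Omega)^{i}/(\Omega)^{i+1}$ being a cyclic $C/(\Omega)$-module, so that the extra $i$-grading adds at most one to the dimension; equivalently, quotienting by a normal positive-degree element drops GK-dimension by at most one.

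With these in place, set $C_{0}=C$ and $C_{i}=C/(\Omega_{1},\dots,\Omega_{i})$. For the forward implications I iterate part~(1) and the descent lemma: if $\{\Omega_{1},\dots,\Omega_{t}\}$ is regular, then each $\Omega_{i}$ is a regular normal element of the Auslander--Gorenstein Cohen--Macaulay algebra $C_{i-1}$, so $\GKdim C_{i}=\GKdim C_{i-1}-1$ and $C_{i}$ remains Auslander--Gorenstein Cohen--Macaulay; inductively $\GKdim C_{t}=m-t$, and at $t=m$ this gives $\GKdim C_{m}=0$, i.e. $C_{m}$ is finite-dimensional. For the converse, suppose $\GKdim C_{t}=m-t$ (the case $t=m$ being finite-dimensionality). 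Since $\GKdim C_{0}=m$ and each of the $t$ steps lowers GK-dimension by at most one, every step lowers it by exactly one; in particular $\GKdim C_{1}=m-1$, so the grade lemma forces $\Omega_{1}$ to be regular in $C$, the descent lemma makes $C_{1}$ Auslander--Gorenstein Cohen--Macaulay of dimension $m-1$, and the induction hypothesis applied to $C_{1}$ with the normalizing sequence $\bar\Omega_{2},\dots,\bar\Omega_{t}$ completes the argument.

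I expect the descent lemma, that the Auslander--Gorenstein and Cohen--Macaulay conditions pass to the quotient by a regular normal element, to be the technical heart of the proof and the step genuinely requiring the Auslander--Gorenstein hypothesis (it is a change-of-rings computation tracking injective dimension and the grade/GK-dimension balance through the long exact $\Ext$-sequences). Everything else reduces to the Hilbert-series pole count of part~(1) and bookkeeping with the Cohen--Macaulay grade identity.
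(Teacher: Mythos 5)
First, a framing remark: the paper does not prove this proposition at all --- it is quoted with a \qed from \cite[Lemma~5.7]{Lev} and \cite[Theorem~2.5]{CV} --- so your attempt can only be compared against the cited literature. There, your architecture is the standard one and your induction bookkeeping in part~(2) is sound: the grade dichotomy (via $\Hom_C(C/(\Omega),C)\cong\rann(\Omega)$, $\Ext^1_C(C/(\Omega),C)\cong C/\Omega C(d)\neq 0$, hence $j(C/(\Omega))\in\{0,1\}$, combined with $j+\GKdim=m$) is exactly how regularity is detected; the filtration of $C$ by powers of $(\Omega)$ correctly yields $\GKdim C\le\GKdim(C/(\Omega))+1$, so a total drop of $t$ over $t$ steps forces a drop of exactly one at each step; and propagating the hypotheses down the quotients via a Rees-type descent is the right mechanism. (Two routine points you gloss: the converse direction of the dichotomy needs the right-module CM identity as well, to rule out a one-sided annihilator; and the forward implication of (2) needs only part~(1) iterated, not descent.)

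There are, however, two genuine gaps. The more serious is in part~(1): you assert that a connected graded Noetherian algebra of finite GK-dimension has a \emph{rational} Hilbert series whose pole order at $t=1$ equals its GK-dimension. Rationality of Hilbert series for Noetherian connected graded algebras is not a theorem at this generality (it is a well-known open problem), so the pole-counting step fails as written; it would be fine for the specific algebras $C'$, $C''$ to which the paper applies the proposition, but not for the proposition as stated. The repair is elementary and avoids rationality entirely: the inequality $\GKdim(C/(\Omega))\le\GKdim(C)-1$ is \cite[Proposition~8.3.5]{MR} (the same result the paper invokes in the proof of Lemma~\ref{xxlem3.11}(3)), and your own filtration bound supplies $\GKdim(C/(\Omega))\ge\GKdim(C)-1$; alternatively, one can argue directly from $\dim(C/(\Omega))_n=\dim C_n-\dim C_{n-d}$ together with the monotonicity $\dim C_{n-d}\le\dim C_n$ furnished by the embedding $C(-d)\hookrightarrow C$, which regularizes the growth enough to handle the $\limsup$ in Definition~\ref{xxdef1.1}. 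The second gap is that your ``descent lemma'' --- that the Auslander--Gorenstein and Cohen--Macaulay properties pass to $C/(\Omega)$ with dimension dropping by one --- is asserted, not proved, and you yourself identify it as the technical heart; it is precisely the content of the cited Levasseur result, so leaving it unproved means your argument is incomplete exactly where the proposition's substance lies (though, to be fair, the paper itself delegates the entire proposition to the same source).
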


Now we study the Fomin-Kirillov algebra $\EE_3$ from 
Definition~\ref{xxdef0.1} and show that it is a classical 
complete intersection.
Recall that $\EE_3$ is the $\kk$-algebra generated by 
$x_{12}, x_{13}, x_{23}$ (where we suppress the comma 
in the subscript of indices) subject to the following relations:
$$\begin{aligned}
x_{12}^2=x_{13}^2=x_{23}^2&=0,\\
x_{12}x_{23}-x_{23}x_{13}-x_{13}x_{12}&=0,\\
x_{23}x_{12}-x_{13}x_{23}-x_{12}x_{13}&=0.
\end{aligned}
$$

\begin{notation}[$C'$, $S$, $\Omega'_1$, $\Omega'_2$, $\Omega'_3$] \label{xxnot3.4}
Let $C'$ be the $\kk$-algebra generated by $x_{12}, x_{13}, x_{23}$, subject to the relations
$$\begin{aligned}
x_{12}^2+x_{13}^2&=0,\\
x_{12}x_{23}-x_{23}x_{13}-x_{13}x_{12}&=0,\\
x_{23}x_{12}-x_{13}x_{23}-x_{12}x_{13}&=0.
\end{aligned}
$$
Let $S$ be the $\kk$-algebra generated by $x_{12}$ and $x_{13}$, subject to the relation
$$x_{12}^2+x_{13}^2=0.$$
Moreover, put  $\Omega'_1:=x_{23}^2$, put $\Omega'_2:=x_{12}x_{23}x_{13}-x_{13}x_{23}x_{12}$
and put $\Omega'_3:=x_{13}^2$. 
\end{notation}

For the result below, we refer the reader to \cite[Chapter~2]{GW} for details 
about $\sigma$-derivations and Ore extensions.

\begin{lemma}
\label{xxlem3.5} 
Retain Notation~\ref{xxnot3.4}. Then the following 
statements hold.
\begin{enumerate}
\item[(1)]
$S$ is connected $\mathbb{N}$-graded, Noetherian, Auslander-regular, 
Cohen-Macaulay 
of global dimension two.

\smallskip

\item[(2)]
The map $\sigma: x_{12}\mapsto x_{13}$ and $x_{13}\mapsto x_{12}$ defines an algebra
automorphism of~$S$, and $\delta: x_{12}\mapsto x_{12}x_{13}$ and 
$x_{13}\mapsto -x_{13}x_{12}$ defines a $\sigma$-derivation of $S$.

\smallskip

\item[(3)]
$C'$ is an Ore extension $S[x_{23};\sigma,\delta]$.
Hence, $C'$ is a connected $\mathbb{N}$-graded, Noetherian, 
Auslander-regular, Cohen-Macaulay $\kk$-algebra of global dimension three.

\smallskip

\item[(4)]
$\{\Omega'_1,\Omega'_2,\Omega'_3\}$ is a normal sequence in $C'$.

\smallskip

\item[(5)]
$C'/(\Omega'_1,\Omega'_2,\Omega'_3)=\EE_3$.
\end{enumerate}
\end{lemma}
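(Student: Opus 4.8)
The plan is to handle the five parts in order, exploiting the iterated Ore-extension structure of $C'$ and then comparing defining ideals for the final identification with $\EE_3$. For (1), I would first recognize $S$ as a skew polynomial ring: since $\kk$ is algebraically closed of characteristic zero, set $p = x_{12} + \sqrt{-1}\,x_{13}$ and $q = x_{12} - \sqrt{-1}\,x_{13}$, which generate $S$ and satisfy $pq + qp = 2(x_{12}^2 + x_{13}^2)$, so the single relation of $S$ becomes $pq+qp=0$. Hence $S \cong \kk_{-1}[p,q]$, which is connected $\mathbb{N}$-graded, Noetherian, Auslander-regular and Cohen-Macaulay of global dimension two (it is the iterated Ore extension $\kk[p][q;\sigma']$ with $\sigma'(p)=-p$); see \cite{GW}. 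For (2), I would check $\sigma$ and $\delta$ are well defined by verifying they annihilate the relation: $\sigma(x_{12}^2+x_{13}^2) = x_{13}^2 + x_{12}^2$, and $\sigma$ is an involution, hence an automorphism; using $\delta(ab)=\sigma(a)\delta(b)+\delta(a)b$ one computes $\delta(x_{12}^2+x_{13}^2)=0$ directly. Both maps are homogeneous, $\sigma$ of degree $0$ and $\delta$ of degree $+1$.

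For (3), I would match relations. The Ore relations $x_{23}s = \sigma(s)x_{23} + \delta(s)$ for $s=x_{12},x_{13}$ read $x_{23}x_{12} = x_{13}x_{23} + x_{12}x_{13}$ and $x_{23}x_{13} = x_{12}x_{23} - x_{13}x_{12}$, which are precisely the two degree-two relations of $C'$, while the remaining relation $x_{12}^2+x_{13}^2=0$ is that of $S$. Thus $C' = S[x_{23};\sigma,\delta]$ as graded algebras with $\deg x_{23}=1$. As a graded Ore extension of $S$ by a degree-zero automorphism and a degree-one $\sigma$-derivation, $C'$ is an iterated Ore extension of $\kk$, and so is connected $\mathbb{N}$-graded, Noetherian, Auslander-regular and Cohen-Macaulay, with $\gldim C' = \gldim S + 1 = 3$; see \cite{GW,Lev}.

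Part (4) is the main obstacle, and I would verify normality one element at a time. First, $\Omega'_1 = x_{23}^2$: using the Ore relations one checks $x_{23}^2 x_{12} = x_{12}x_{23}^2$ and $x_{23}^2 x_{13} = x_{13}x_{23}^2$, where the lower-order terms cancel because $x_{13}^2$ is central in $S$; thus $\Omega'_1$ is in fact \emph{central} in $C'$. Next I would record the identity $\Omega'_2 = x_{23}x_{13}^2 - x_{13}^2 x_{23} = [x_{23},\,x_{13}^2]$, obtained by reducing $x_{12}x_{23}x_{13}$ and $x_{13}x_{23}x_{12}$ with the two degree-two relations. The crux is showing $\Omega'_2$ is normal in $C'/(\Omega'_1)$; I would compute $\Omega'_2 x_{12}$, $\Omega'_2 x_{13}$, and $\Omega'_2 x_{23}$ modulo $x_{23}^2$ and exhibit the normalizing automorphism, which is the one genuinely lengthy calculation. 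Finally, since $\Omega'_2 = [x_{23},x_{13}^2]$ vanishes in $C'/(\Omega'_1,\Omega'_2)$, the element $\Omega'_3 = x_{13}^2$ commutes there with $x_{23}$, and it already commutes with $x_{12},x_{13}$ (being central in $S$), so $\Omega'_3$ is central, hence normal, in $C'/(\Omega'_1,\Omega'_2)$.

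For (5), I would show the two defining ideals coincide. The relations of $C'$ all lie in the defining ideal of $\EE_3$ (as $x_{12}^2+x_{13}^2$, $x_{13}^2$, $x_{23}^2$ all vanish in $\EE_3$), and the one nontrivial check is that $\Omega'_2 \mapsto 0$ in $\EE_3$: using $x_{12}^2 = x_{13}^2 = 0$ together with the two quadratic relations, both $x_{12}x_{23}x_{13}$ and $x_{13}x_{23}x_{12}$ reduce to $x_{13}x_{12}x_{13}$, so $\Omega'_2 = 0$ in $\EE_3$. This yields a surjection $C'/(\Omega'_1,\Omega'_2,\Omega'_3)\twoheadrightarrow \EE_3$. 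Conversely, every generator of the defining ideal of $\EE_3$ lies in $(\text{relations of }C') + (\Omega'_1,\Omega'_2,\Omega'_3)$, since $x_{23}^2 = \Omega'_1$, $x_{13}^2 = \Omega'_3$, and $x_{12}^2 = (x_{12}^2+x_{13}^2) - \Omega'_3$. Hence the ideals are equal and $C'/(\Omega'_1,\Omega'_2,\Omega'_3) = \EE_3$ exactly. As a consistency check, the degrees $(2,3,2)$ give Hilbert series $\tfrac{(1-t^2)(1-t^3)(1-t^2)}{(1-t)^3} = 1 + 3t + 4t^2 + 3t^3 + t^4$ of total dimension $12 = \dim_\kk \EE_3$, so by Proposition~\ref{xxpro3.3}(2) the finiteness of the quotient shows $\{\Omega'_1,\Omega'_2,\Omega'_3\}$ is in fact a \emph{regular} normal sequence.
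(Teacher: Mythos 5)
Your proposal is correct and follows the same overall architecture as the paper's proof---the same $S$, the same Ore-extension presentation $C' = S[x_{23};\sigma,\delta]$ with the same matching of the Ore relations $x_{23}s = \sigma(s)x_{23}+\delta(s)$ against the quadratic relations of $C'$, the same one-at-a-time normality checks, and the same comparison of defining ideals in (5)---but your treatment of the computational core is genuinely slicker and deserves comment. The paper proves part (4) by direct expansion; your identity $\Omega'_2 = [x_{23}, x_{13}^2]$, which indeed holds already in $C'$ (one application of each quadratic relation, as you indicate), combined with the observation that $x_{13}^2 = -x_{12}^2$ is central in $S$, actually makes short work of the step you deferred as ``the one genuinely lengthy calculation.'' Namely, since $x_{13}^2$ commutes with $x_{12}$ and $x_{13}$, one gets $\Omega'_2 x_{12} = [x_{23}x_{12},\, x_{13}^2] = [x_{13}x_{23} + x_{12}x_{13},\, x_{13}^2] = x_{13}[x_{23},x_{13}^2] = x_{13}\Omega'_2$ and symmetrically $\Omega'_2 x_{13} = [x_{23}x_{13},\,x_{13}^2] = x_{12}\Omega'_2$, both valid in $C'$ itself; only $\Omega'_2 x_{23} \equiv -x_{23}\Omega'_2$ requires $x_{23}^2 \equiv 0$, and it is immediate from the commutator form. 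These are exactly the normalizing relations the paper obtains by brute force in $C'/(\Omega'_1)$, so the verification you promised does go through, and faster than you anticipated. Your derivation of the centrality of $\Omega'_3$ in $C'/(\Omega'_1,\Omega'_2)$ from $\Omega'_2 \equiv 0$ likewise replaces the paper's two longest displayed computations, and the same identity trivializes the check $\Omega'_2 = 0$ in $\EE_3$ for part (5), where the paper reaches the expression $x_{23}x_{13}^2 - x_{13}^2x_{23}$ only at that late stage. Two minor remarks: your explicit change of variables $p = x_{12} + \sqrt{-1}\,x_{13}$, $q = x_{12} - \sqrt{-1}\,x_{13}$ makes the isomorphism $S \cong \kk_{-1}[p,q]$ concrete where the paper merely asserts it; and your closing regularity check duplicates the paper's Theorem~\ref{xxthm3.6}, where precisely this argument (finite-dimensionality of $\EE_3$ together with Proposition~\ref{xxpro3.3}(2)) appears---it is not needed for the lemma itself, since part (4) claims only a normal sequence, not a regular one.
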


\begin{proof}
(1) This holds as  $S$ is isomorphic to the $(-1)$-skew polynomial ring
$\Bbbk_{-1}[z_1,z_2]$, 
which is well known to possess the desired properties.

\medskip

(2) It is clear that $\sigma$ is an algebra automorphism.
To check that $\delta$ is a $\sigma$-derivation, we calculate
$$\begin{aligned}
\delta (x_{12}^2+x_{13}^2) &=\delta(x_{12}) x_{12}+\sigma(x_{12})\delta(x_{12})
+\delta(x_{13}) x_{13}+\sigma(x_{13})\delta(x_{13})\\
&=(x_{12}x_{13})x_{12}+x_{13}(x_{12}x_{13})-(x_{13}x_{12})x_{13}-x_{12}(x_{13}x_{12}) \;= 0.
\end{aligned}
$$

\medskip

(3) By part (2) and the definition of $C'$, we see that 
$C'$ is an Ore extension $S[x_{23};\sigma,\delta]$. Now 
the second statement holds by (1) and several standard 
results including \cite[Theorem~4.2]{Ek} and \cite[page~184]{LS}.

\medskip

(4) First we claim that $\Omega'_1:=x_{23}^2$ is central (so, normal) in $C'$. 
We check:
$$\begin{aligned}
x_{23}^2 x_{12}&=x_{23} (x_{13}x_{23}+x_{12}x_{13})\\
&=(x_{12}x_{23}-x_{13}x_{12})x_{23}+(x_{13}x_{23}+x_{12}x_{13})x_{13}\\
&=x_{12}x_{23}^2-x_{13}(x_{12}x_{23}-x_{23}x_{13})+x_{12}x_{13}x_{13}\\
&=x_{12}x_{23}^2-x_{13}(x_{13}x_{12})+x_{12}x_{13}x_{13}\\
&=x_{12}x_{23}^2.
\end{aligned}
$$
Similarly, $x_{23}^2 x_{13}=x_{13}x_{23}^2$ (and $x_{23}^2$ 
commutes with $x_{23}$). So, the first claim holds.

\smallskip

Second we claim that $\Omega'_2$ is normal in $C'/(\Omega'_1)$.
We calculate
$$\begin{aligned}
\Omega'_2 x_{12}&=x_{12}x_{23}(x_{13}x_{12})-x_{13}(x_{23}x_{12})x_{12}\\
&=x_{12}x_{23}(x_{12}x_{23}-x_{23}x_{13})-x_{13}(x_{13}x_{23}+x_{12}x_{13})x_{12}\\
&=x_{12}(x_{23}x_{12})x_{23}-x_{13}^2x_{23}x_{12}-x_{13}x_{12}(x_{13}x_{12})\\
&=x_{12}(x_{13}x_{23}+x_{12}x_{13})x_{23}-x_{13}^2x_{23}x_{12}-x_{13}x_{12}
(x_{12}x_{23}-x_{23}x_{13})\\
&=x_{12}^2x_{13}x_{23}-x_{13}^2x_{23}x_{12}-x_{13}x_{12}
(x_{12}x_{23}-x_{23}x_{13})\\
&=x_{13}[-x_{13}^2x_{23}-x_{13}x_{23}x_{12}-x_{12}
(x_{12}x_{23}-x_{23}x_{13})]\\
&=x_{13}(-x_{13}x_{23}x_{12}+x_{12}x_{23}x_{13})\\
&=x_{13}\Omega'_2.
\end{aligned}
$$
Similarly, $\Omega'_2 x_{13}=x_{12}\Omega'_2$ and $\Omega'_2 x_{23} = -x_{23}\Omega'_2$. 
Thus $\Omega'_2$ is normal in $C'/(\Omega'_1)$.

\smallskip

Finally we claim that $\Omega'_3:=x_{13}^2$ is central  in $C'/(\Omega'_1,\Omega'_2)$. We calculate
$$\begin{aligned}
x_{13}^2 x_{12}&=x_{13}(x_{12}x_{23}-x_{23}x_{13})\\
&=(x_{12}x_{23}-x_{23}x_{13})x_{23}-(x_{23}x_{12}-x_{12}x_{13})x_{13}\\
&=-x_{23}x_{13}x_{23}-x_{23}x_{12}x_{13}+x_{12}x_{13}^2\\
&=-x_{23}(x_{23}x_{12}-x_{12}x_{13})-x_{23}x_{12}x_{13}+x_{12}x_{13}^2\\
&=x_{12}x_{13}^2
\end{aligned}
$$
and
$$\begin{aligned}
x_{23}x_{13}^2&=(x_{12}x_{23}-x_{13}x_{12})x_{13}\\
&=x_{12}x_{23}x_{13}-x_{13}(x_{23}x_{12}-x_{13}x_{23})\\
&=x_{13}^2 x_{23}.
\end{aligned}
$$
Since $x_{13}^2$ commutes with $x_{13}$, we have that $x_{13}^2$ is central 
in $C'/(\Omega'_1, \Omega'_2)$.

\medskip

(5) By comparing the generators and relations, one sees that
$C'/(\Omega'_1,\Omega'_3)=\EE_3$. It remains to show that $\Omega'_2=0$
in $C'/(\Omega'_1,\Omega'_3)$. We check, in $C'/(\Omega'_1,\Omega'_3)$ that:
$$\Omega'_2  ~=~ x_{12}x_{23}x_{13}-x_{13}x_{12}x_{13}-x_{13}x_{23}x_{12}+x_{13}x_{12}x_{13}~ =~x_{23}x_{13}^2-x_{13}^2x_{23} ~= 0.$$
Thus, $\Omega'_2=0$ in $C'/(\Omega'_1,\Omega'_3)$ as desired.
\end{proof}

\begin{theorem}
\label{xxthm3.6} 
We obtain that $\EE_3$ is a cci, and that $cci(\EE_3)=3$. 
As a consequence, $\EE_3$ is Auslander-Gorenstein, Cohen-Macaulay, and Frobenius.
\end{theorem}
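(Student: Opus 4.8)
The plan is to read the classical-complete-intersection structure straight off Lemma~\ref{xxlem3.5}, to upgrade the normal sequence found there to a \emph{regular} normal sequence, then to pin down the cci number by Hilbert-series and homological bookkeeping, and finally to transport the homological properties of the AS-regular cover down to $\EE_3$. By Lemma~\ref{xxlem3.5}(1)--(3), $C'$ is a connected $\mathbb{N}$-graded Noetherian algebra that is Auslander-regular (hence AS-regular) and Cohen-Macaulay with $\GKdim C'=3$, and by Lemma~\ref{xxlem3.5}(4)--(5) the homogeneous normalizing sequence $\{\Omega'_1,\Omega'_2,\Omega'_3\}$ of positive degree satisfies $C'/(\Omega'_1,\Omega'_2,\Omega'_3)\cong\EE_3$. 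Since $\EE_3$ is finite-dimensional we have $\GKdim\EE_3=0$, so Proposition~\ref{xxpro3.3}(2)---applied to the length-$3$ normalizing sequence in the $\GKdim$-$3$, Auslander-Gorenstein, Cohen-Macaulay algebra $C'$---shows at once that $\{\Omega'_1,\Omega'_2,\Omega'_3\}$ is in fact a regular normalizing sequence. By Definition~\ref{xxdef3.2}(3) this exhibits $\EE_3$ as a cci with $cci(\EE_3)\le 3$.

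For the reverse inequality $cci(\EE_3)\ge 3$, suppose $\EE_3\cong C/(\Omega_1,\dots,\Omega_t)$ for some connected $\mathbb{N}$-graded Noetherian AS-regular algebra $C$ and some regular normalizing sequence $\{\Omega_1,\dots,\Omega_t\}$ with $\deg\Omega_i\ge 1$. The $C'$-presentation already gives $\injdim\EE_3 = 3-3 = 0$, since each regular normal element lowers the injective dimension by one; the same bookkeeping applied to $C$ yields $\injdim C = t$, and as $\injdim C=\gldim C$ for AS-regular $C$, we conclude $C$ is AS-regular of global dimension exactly $t$. The defining surjection $C\twoheadrightarrow\EE_3$ is degree-preserving, so $\dim_\kk C_1\ge\dim_\kk(\EE_3)_1 = 3$. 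If $t\le 2$ this is impossible: an AS-regular algebra of global dimension $\le 1$ is $\kk$ or a one-variable polynomial ring, with $\dim_\kk C_1\le 1$; and for global dimension $2$ the AS-Gorenstein condition forces the minimal resolution of $\kk$ to have the shape $0\to C(-l)\to\bigoplus_i C(-a_i)\to C\to\kk\to 0$ with a single top term, so that $q_C(s):=1/H_C(s) = 1 - \sum_i s^{a_i} + s^l$. Comparing with $H_{\EE_3}(s) = H_C(s)\prod_{i=1}^t(1-s^{\deg\Omega_i})$ and using $H_{\EE_3}(1)=\dim_\kk\EE_3\ne 0$, we see that $q_C$ vanishes at $s=1$; substituting $s=1$ into the displayed form gives $2-g=0$, where $g$ is the number of generators of $C$, so $\dim_\kk C_1\le g = 2 < 3$, a contradiction. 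Hence $t\ge 3$ and $cci(\EE_3)=3$.

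The homological consequences then follow by descent through the regular normal sequence $\{\Omega'_1,\Omega'_2,\Omega'_3\}$ in $C'$. As $C'$ is Auslander-Gorenstein and Cohen-Macaulay and each $\Omega'_i$ is regular and normal of positive degree, quotienting by one such element at a time preserves both the Auslander-Gorenstein and the Cohen-Macaulay properties while lowering the injective and GK-dimensions by one (this is the content of the ``cci'' implications in Figure~\ref{fig:hom}, e.g.\ \cite[Theorem~3.6]{Lev} together with \cite[Theorem~2.2]{Jo2}); therefore $\EE_3$ is Auslander-Gorenstein and Cohen-Macaulay. In particular $\EE_3$ is AS-Gorenstein with $\injdim\EE_3 = 0$, hence self-injective; being also finite-dimensional and connected graded, $\EE_3$ is Frobenius.

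The main obstacle is the lower bound $cci(\EE_3)\ge 3$: the cci property and the estimate $cci(\EE_3)\le 3$ drop out almost immediately from Lemma~\ref{xxlem3.5} and Proposition~\ref{xxpro3.3}(2), but ruling out presentations with $t\le 2$ requires the additional input that any AS-regular cover of $\EE_3$ must carry at least three degree-one generators while having global dimension $t$, which is where one invokes the shape of the minimal resolution of $\kk$ over low-dimensional AS-regular algebras together with $\dim_\kk\EE_3<\infty$. It is also essential that $C'$ be genuinely Auslander-regular (Lemma~\ref{xxlem3.5}(3)), not merely AS-regular, so that the Auslander-Gorenstein and Cohen-Macaulay properties---and not only their AS-analogues---descend to $\EE_3$.
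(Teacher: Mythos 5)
Your proof is correct. The cci property and the bound $cci(\EE_3)\le 3$ follow the paper's own route exactly: Lemma~\ref{xxlem3.5} supplies the normal sequence $\{\Omega'_1,\Omega'_2,\Omega'_3\}$ in the Auslander-regular, Cohen--Macaulay algebra $C'$ with $\GKdim C'=3$, and Proposition~\ref{xxpro3.3}(2) together with $\GKdim\EE_3=0$ upgrades it to a regular normal sequence; likewise your descent of the Auslander-Gorenstein, Cohen--Macaulay, and Frobenius properties matches the paper. Where you genuinely diverge is the lower bound $cci(\EE_3)\ge 3$. The paper goes \emph{up} the hypothetical length-two presentation $\EE_3\cong B/(f_1,f_2)$: since $\EE_3$ is Auslander-Gorenstein and Cohen--Macaulay, the lifting results \cite[Theorem~3.6 and Remark~5.10]{Lev} force $B$ to be Auslander-regular and Cohen--Macaulay of GK- and global dimension two, and then the classification in \cite{AS} says such an algebra is generated by two elements, contradicting the three generators of $\EE_3$. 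You instead go \emph{down}: Rees-lemma bookkeeping gives $\gldim C=\injdim C=t$, and for $t\le 2$ you extract the generator bound from the Euler characteristic of the minimal resolution of $\kk$ --- the AS-Gorenstein condition forces a rank-one top term, so $1/H_C(s)=1-\sum_i s^{a_i}+s^l$, and the vanishing $q_C(1)=0$, forced by $H_{\EE_3}(1)=\dim_\kk\EE_3\neq 0$ against the factor $\prod_i(1-s^{\deg\Omega_i})$, yields exactly two generators --- contradicting $\dim_\kk C_1\ge\dim_\kk(\EE_3)_1=3$. Your version is more self-contained: it avoids both the going-up transfer results and the classification of global-dimension-two AS-regular algebras, and it automatically handles covers with generators in degrees $>1$, since the total generator count bounds $\dim_\kk C_1$ regardless of the degrees $a_i$. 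What the paper's route buys is structural information about the hypothetical cover (Auslander-regular and Cohen--Macaulay of dimension two) at the cost of heavier cited machinery; both arguments terminate in the same generator-count contradiction.
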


\begin{proof}
Recall Notation~\ref{xxnot3.4}. By Lemma~\ref{xxlem3.5}(3,4,5), we 
get that $\{\Omega'_1,\Omega'_2,\Omega'_3\}$ is a
normal sequence of an Auslander regular and Cohen-Macaulay
algebra $C'$ such that $\EE_3 =  C'/( \Omega'_1,\Omega'_2,\Omega'_3)$.
By the definition of $C'$ in Lemma~\ref{xxlem3.5}(3), one sees that
$\GKdim C'=3$ [Remark~\ref{xxrem1.2}(2)]. Now by \cite[(2.8)]{FK}, 
$\GKdim \EE_3=0.$ Hence, $\GKdim C'/( \Omega'_1,\Omega'_2,\Omega'_3) = 0$ 
and with Proposition~\ref{xxpro3.3}(2) we obtain that 
$\{\Omega'_1,\Omega'_2,\Omega'_3\}$ is a regular normal sequence of
$C'$. By definition, $\EE_3$ is a cci.

\smallskip Since $\EE_3$ is finite dimensional and a cci, it is
Auslander Gorenstein and Cohen-Macaulay, consequently, Frobenius.

\smallskip For the cci number of $\EE_3$, note that by the argument above we 
have $cci(\EE_3)\leq 3$. If $cci(\EE_3)\leq 2$, then there is a 
Noetherian AS-regular algebra $B$  
and a regular normal sequence $\{f_1,f_2\}$ of $B$ 
such that $\EE_3\cong B/(f_1,f_2)$.
Recall that $\EE_3$ is Auslander-Gorenstein and Cohen-Macaulay, so
by \cite[Theorem~3.6 and Remark~5.10]{Lev}
$B$ is Auslander-regular and Cohen-Macaulay of 
GK-dimension and global dimension two. Such a $B$
must be generated by two elements \cite[Introduction]{AS}. This contradicts
the fact that $\EE_3$ is generated by three elements.
Thus, $cci(\EE_3)\geq 3$.
\end{proof}

Now we consider the quadratic dual $\EE_3^{!}$ of $\EE_3$.
Recall that $\EE_3^{!}$ is generated by $y_{12}$,~$y_{13}$,~$y_{23}$ 
(where we suppress the comma in the subscript of indices) 
subject to the relations
\begin{align*}
y_{12}y_{23}+y_{23}y_{13} ~=~ y_{12}y_{23}+y_{13}y_{12}&=0,\\
y_{23}y_{12}+y_{13}y_{23} ~=~ y_{23}y_{12}+y_{12}y_{13}&=0.
\end{align*}

\begin{definitionlemma}[$a$, $b$, $c$, $C''$] \label{xxdeflem3.7}
Let $a:=y_{13}+y_{23}$, $b:=y_{13}-y_{23}$, $c:=y_{12}$. After a linear transformation, 
$\EE_3^{!}$ is generated by $a$, $b$, $c$, subject to the following relations, 
$$ 
ca+ac ~=~ cb-bc ~=~
-2bc+a^2-b^2 ~=~
-2ac+(ab-ba) ~=0.
$$
Moreover, let $C''$ be the algebra generated by $a$, $b$, $c$,
subject to the relations
$$
ca+ac ~=~
cb-bc ~=~
a^2b-ba^2 ~=~
ab^2-b^2a ~=0. 
$$ 

\vspace{-.25in}
\qed
\end{definitionlemma}

\begin{lemma}
\label{xxlem3.8} The algebra $C''$ is Noetherian, 
AS-regular, Auslander-regular,
Cohen-Macaulay, of global dimension four, 
of GK-dimension four, and it has Hilbert series $(1-t)^{-3}(1-t^2)^{-1}$.
\end{lemma}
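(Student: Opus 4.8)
The plan is to realize $C''$ as a length-four iterated Ore extension, from which every asserted property follows by standard results on Ore extensions. The starting observation is that $C''$ contains a distinguished degree-two element $w := ab - ba$, and I would first record its commutation behaviour. Using only the defining relations $a^2b = ba^2$ and $ab^2 = b^2a$ one computes $aw = -wa$ and $bw = -wb$; combining these with $ca = -ac$ and $cb = bc$ one gets $cw = -wc$ as well. Thus $w$ is a normal element that anticommutes with each of $a,b,c$, and its defining equation may be read as the Ore-type relation $ba = ab - w$.

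These identities suggest the tower
$$C'' \;\cong\; \kk[a]\,[w;\tau]\,[b;\sigma_1,\delta_1]\,[c;\sigma_2],$$
with $a,b,c$ in degree one and $w$ in degree two, where: (i) $\tau$ is the automorphism of $\kk[a]$ with $\tau(a)=-a$, giving $wa=-aw$; (ii) $\sigma_1$ is the automorphism of $\kk[a][w;\tau]$ fixing $a$ and sending $w\mapsto -w$, while $\delta_1$ is the $\sigma_1$-derivation determined by $\delta_1(a)=-w$, $\delta_1(w)=0$, giving $ba=ab-w$ and $bw=-wb$; and (iii) $\sigma_2$ is the automorphism with $a\mapsto -a$, $b\mapsto b$, $w\mapsto -w$, giving $ca=-ac$, $cb=bc$, $cw=-wc$. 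Each $\sigma$ is visibly degree-preserving and preserves the relations of its base, and each displayed relation is homogeneous, so the tower is connected graded with $w$ in degree two. The one point that must be checked with care — and which I expect to be the genuine technical obstacle, since the whole approach rests on it — is that $\delta_1$ is a well-defined graded $\sigma_1$-derivation, i.e.\ that it is consistent with the sole relation $wa+aw=0$ of the intermediate ring: applying the $\sigma_1$-Leibniz rule gives $\delta_1(wa)=w^2$ and $\delta_1(aw)=-w^2$, whose sum vanishes, so $\delta_1$ extends consistently.

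To identify the tower with $C''$ I would exhibit mutually inverse homomorphisms. The assignment $a\mapsto a$, $b\mapsto b$, $c\mapsto c$ respects the relations of $C''$ inside the tower: the quadratic relations hold by construction of $\sigma_2$, while the cubic relations $a^2b=ba^2$ and $ab^2=b^2a$ follow from $ba=ab-w$ together with $aw=-wa$ and $bw=-wb$. This yields a surjection $C''\to \kk[a][w;\tau][b;\sigma_1,\delta_1][c;\sigma_2]$. Conversely, the assignment $a\mapsto a$, $w\mapsto ab-ba$, $b\mapsto b$, $c\mapsto c$ respects all the tower relations recorded in the first paragraph and so defines a homomorphism in the other direction; noting that $w=ab-ba$ holds in the tower, both maps fix the generators $a,b,c$, so the two composites are the identity and the maps are inverse isomorphisms.

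With the isomorphism established the remaining claims are immediate. The iterated Ore extension has the PBW basis $\{a^i w^j b^k c^l\}$, so its Hilbert series is the product of the factors $\tfrac{1}{1-t^{\deg}}$ over the four variables, namely $(1-t)^{-3}(1-t^2)^{-1}$, as claimed. Since $\kk[a]$ is Noetherian and AS-regular, Auslander-regular, and Cohen-Macaulay of global dimension one, and since each of these properties is preserved under forming a graded Ore extension by a graded automorphism together with a graded $\sigma$-derivation — as in \cite[Theorem~4.2]{Ek} and \cite[page~184]{LS}, already applied in Lemma~\ref{xxlem3.5}(3) — the three successive extensions show that $C''$ is Noetherian, AS-regular, Auslander-regular, and Cohen-Macaulay of global dimension four. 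Finally $\GKdim C''=4$, either from the order of the pole of the Hilbert series at $t=1$ or from the additivity of GK-dimension along the tower.
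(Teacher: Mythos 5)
Your proof is correct, but it takes a genuinely different route from the paper. The paper splits $C''$ only once: it sets $B=\kk\langle a,b\rangle/(a^2b-ba^2,\,ab^2-b^2a)$, cites Artin--Schelter's classification paper directly for the fact that $B$ is Noetherian AS-regular of global dimension three with Hilbert series $(1-t)^{-2}(1-t^2)^{-1}$ (their (8.5) and (1.15)), adds Auslander-regularity and the CM property via \cite[Corollary~5.10]{Lev}, and then realizes $C''=B[c;\sigma]$ with $\sigma\colon a\mapsto -a$, $b\mapsto b$, transferring everything up one step via \cite[Theorem~4.2]{Ek} and \cite[page~184]{LS}. You instead avoid the Artin--Schelter citation entirely by exhibiting the full four-step tower $\kk[a][w;\tau][b;\sigma_1,\delta_1][c;\sigma_2]$ with $w=ab-ba$ placed in degree two; your intermediate three-step tower is, in effect, a self-contained re-proof of the needed properties of $B$, using the known alternative description of this cubic AS-regular algebra as an iterated Ore extension. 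Your verifications check out: the anticommutation identities $aw=-wa$, $bw=-wb$, $cw=-wc$ do follow from the defining relations; the consistency check $\delta_1(wa+aw)=w^2-w^2=0$ is exactly the point that makes $\delta_1$ well defined (and since $\delta_1$ kills the generating relation and $\sigma_1$ preserves the ideal, $\delta_1$ preserves the whole ideal); and the two maps you write down respect, respectively, the relations of $C''$ in the tower (the cubic relations collapse via $ba=ab-w$ and the anticommutation of $w$) and the Ore presentation of the tower in $C''$, so they are mutually inverse. What each approach buys: the paper's argument is shorter because it leans on the classification; yours is more elementary and self-contained, and it delivers the PBW basis $\{a^iw^jb^kc^l\}$ explicitly, so the Hilbert series $(1-t)^{-3}(1-t^2)^{-1}$ and $\GKdim C''=4$ come out by direct count rather than by citation. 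One small caution: ``additivity of GK-dimension along the tower'' is not valid for arbitrary Ore extensions with derivations, so in your last sentence the pole-order argument (which is airtight given your PBW basis) should be regarded as the actual proof, with additivity only as a heuristic; with that understood, both arguments invoke the same transfer machinery of Ekstr\"om and Levasseur--Stafford for the Noetherian, Auslander-regular, and Cohen-Macaulay properties, applied three times in your case and once in the paper's.
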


\begin{proof} Let $B$ be the $\kk$-algebra
$\Bbbk\langle a,b\rangle/(a^2b-ba^2,ab^2-b^2a)$. This algebra   
is Noetherian  AS-regular algebra of global dimension three 
\cite[(8.5)]{AS}, with Hilbert series $(1-t)^{-2}(1-t^2)^{-1}$ 
\cite[(1.15)]{AS}, and is also  Auslander-regular and 
Cohen-Macaulay (e.g., via \cite[Corollary~5.10]{Lev}) and has 
GK-dimension three (e.g., via Proposition~\ref{xxpro3.3}(2)). 
Note that $C''$ is an Ore extension
$B[c,\sigma]$ where $\sigma: a\mapsto -a,~ b\mapsto b$. So, 
$C''$ has Hilbert series $H_{B}(t)/(1-t^{\deg c}) = (1-t)^{-3}(1-t^2)^{-1}$.
The rest of the result follows from several standard results 
including \cite[Theorem~4.2]{Ek} and \cite[page~184]{LS}.
\end{proof}

\begin{lemma}[$\Omega''_1$, $\Omega''_2$, $\Omega''_3$]
\label{xxlem3.9} Retain the  notation of Definition-Lemma~\ref{xxdeflem3.7}.
\begin{enumerate}
\item[(1)]
Let $\Omega''_1:=(ab+ba)c$, 
$\Omega''_2:=-2bc+a^2-b^2$, and 
$\Omega''_3:=-2ac+(ab-ba)$.
Then, $\{\Omega''_1,\Omega''_2,\Omega''_3\}$ is a 
normal sequence in $C''$.

\smallskip

\item[(2)]
$C''/(\Omega''_1,\Omega''_2,\Omega''_3)=\EE_{3}^{!}$.
\end{enumerate}
\end{lemma}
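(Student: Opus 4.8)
The plan is to handle the two parts by direct computation: part~(1) via successive normality checks in the quotients $C''$, $C''/(\Omega''_1)$, and $C''/(\Omega''_1,\Omega''_2)$ (paralleling Lemma~\ref{xxlem3.5}(4)), and part~(2) by comparing the defining ideals of $C''$ and $\EE_3^!$ inside the free algebra $\kk\langle a,b,c\rangle$.

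For part~(1), I would first record that $N:=ab+ba$ is central in the subalgebra $B:=\kk\langle a,b\rangle/(a^2b-ba^2,\,ab^2-b^2a)$ of Lemma~\ref{xxlem3.8}: the identity $aN=Na$ is the relation $a^2b=ba^2$ rearranged, and $bN=Nb$ is $ab^2=b^2a$. Since $C''=B[c;\sigma]$ with $\sigma(a)=-a$, $\sigma(b)=b$ and hence $\sigma(N)=-N$, the element $\Omega''_1=Nc$ satisfies $\Omega''_1a=-a\Omega''_1$, $\Omega''_1b=b\Omega''_1$, $\Omega''_1c=-c\Omega''_1$, so it is normal in $C''$. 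A short computation then shows $\Omega''_2=a^2-b^2-2bc$ commutes with $b$ and $c$ on the nose and satisfies $\Omega''_2a=a\Omega''_2+2\Omega''_1$, so $\Omega''_2$ is central (hence normal) in $C''/(\Omega''_1)$; and $\Omega''_3=ab-ba-2ac$ satisfies $\Omega''_3a=-a\Omega''_3$, $\Omega''_3c=-c\Omega''_3$, and $\Omega''_3b=-b\Omega''_3-2\Omega''_1$, so it anticommutes with all generators and is normal in $C''/(\Omega''_1,\Omega''_2)$. The only care needed is to perform each check in the correct successive quotient so that the correction terms, each a multiple of $\Omega''_1$, may be discarded.

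For part~(2), write $I_{C''}$ and $I_{\EE}$ for the defining two-sided ideals of $C''$ and of $\EE_3^!$ in $\kk\langle a,b,c\rangle$, using the presentation of $\EE_3^!$ from Definition-Lemma~\ref{xxdeflem3.7}. It suffices to prove $I_{C''}+(\Omega''_1,\Omega''_2,\Omega''_3)=I_{\EE}$. The inclusion $\supseteq$ is immediate: the relations $ca+ac$ and $cb-bc$ of $C''$ are two of the defining relations of $\EE_3^!$, and $\Omega''_2$, $\Omega''_3$ are literally the other two. For the reverse inclusion I must check that the remaining generators $a^2b-ba^2$, $ab^2-b^2a$, and $\Omega''_1$ vanish in $\EE_3^!$. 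Substituting $a^2=b^2+2bc$ and using $cb=bc$ gives $a^2b=b^3+2b^2c=ba^2$ at once, and a short reduction using $ab=ba+2ac$, $ac=-ca$, $cb=bc$ yields $ab^2-b^2a=2(ab+ba)c=2\Omega''_1$ in $\EE_3^!$; thus both remaining relations reduce to the single claim $\Omega''_1=0$.

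The crux is therefore to show $\Omega''_1=(ab+ba)c=0$ in $\EE_3^!$, which is not apparent from the $a,b,c$-relations. Here I would revert to the original generators: from $a=y_{13}+y_{23}$, $b=y_{13}-y_{23}$, $c=y_{12}$ one computes $ab+ba=2(y_{13}^2-y_{23}^2)=2(a_{13}-a_{23})$ in the notation of Notation~\ref{xxnot1.5}, so $\Omega''_1=2(a_{13}-a_{23})y_{12}$. By Lemma~\ref{xxlem1.4} the elements $a_{i,j}$ are central and satisfy $y_{j,k}(a_{i,j}-a_{i,k})=0$ (as in the proof of Theorem~\ref{xxthm0.3}(3)); taking $(i,j,k)=(3,1,2)$ gives $y_{12}(a_{13}-a_{23})=0$, hence $\Omega''_1=0$. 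This yields the reverse inclusion and so $C''/(\Omega''_1,\Omega''_2,\Omega''_3)=\EE_3^!$. I expect this vanishing of $\Omega''_1$---equivalently, the fact that the extra $C''$-relation $ab^2=b^2a$ already holds in $\EE_3^!$---to be the main obstacle, since it becomes transparent only after translating back to the $y$-variables and invoking the centrality relations of Section~\ref{xxsec1}.
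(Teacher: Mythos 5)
Your proof is correct. For part~(1) it coincides with the paper's argument: both establish normality of $\Omega''_1$ via centrality of $ab+ba$ in $B$ and the Ore-extension structure $C''=B[c;\sigma]$, then check $\Omega''_2$ and $\Omega''_3$ in the successive quotients, with the same correction terms ($[\Omega''_2,a]=2\Omega''_1$ and $\Omega''_3 b+b\Omega''_3=-2\Omega''_1$). For part~(2) your skeleton (comparing the two defining ideals in $\kk\langle a,b,c\rangle$ and reducing everything to the single claim $\Omega''_1=0$ in $\EE_3^!$) matches the paper, but you resolve the crux differently. The paper stays entirely inside the $a,b,c$-presentation: modulo the commutation relations and $\Omega''_2$ one has $\Omega''_1=abc-bca=\tfrac12 a(a^2-b^2)-\tfrac12(a^2-b^2)a=\tfrac12[b^2,a]$, which combined with your identity $ab^2-b^2a=2\Omega''_1$ forces $2\Omega''_1=-2\Omega''_1$, hence $\Omega''_1=0$ in characteristic zero. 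You instead translate back to the $y_{i,j}$-generators, compute $\Omega''_1=2(a_{13}-a_{23})y_{12}$, and invoke the Section~1 relation $y_{12}(a_{13}-a_{23})=0$ from Lemma~\ref{xxlem1.4} together with centrality of the $a_{i,j}$; this is legitimate since Definition-Lemma~\ref{xxdeflem3.7} is an invertible linear change of variables. Each route has a virtue: the paper's is self-contained within the presentation used for the normality checks, while yours makes the vanishing conceptually transparent---$\Omega''_1$ is (twice) exactly the zero-divisor product witnessing non-primeness in Theorem~\ref{xxthm0.3}(3)---and avoids the delicate bookkeeping in the paper's two displayed identities, which (modulo only the commutation relations) satisfy $\Omega''_3 b+b\Omega''_3=[a,b^2]-2\Omega''_1$ and so must be combined rather than read independently.
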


\begin{proof}
(1) 
Note that $(ab+ba)c=-c(ab+ba)$ in $C''$. 
It is also easy to
check that $(ab+ba)$ is central  in the $AS$-regular algebra
$B$ from the proof of Lemma~\ref{xxlem3.8}. Since $C''$ is an 
Ore extension $B[c;\sigma]$ (from the proof of Lemma~\ref{xxlem3.8}), 
$(ab+ba)$ is normal in $C''$, and so is  $c$.
Hence, $\Omega''_1=(ab+ba)c$ is normal in $C''$. 

\smallskip To show $\Omega''_2$
is normal in $C''/(\Omega''_1)$, note that 
$[\Omega''_2,c]=[\Omega''_2,b]=0$ and 
$$
[\Omega''_2, a]=\Omega''_2 a-a\Omega''_2=2(ab+ba)c=0
$$
in $C''/(\Omega''_1)$. 

\smallskip Now to see that $\Omega_3''$ in normal in $C''/(\Omega''_1, \Omega''_2)$, note
$\Omega''_3 c+c\Omega''_3=
\Omega''_3 a+a\Omega''_3
=0$. Lastly,
we have
$$
\Omega''_3 b+b\Omega''_3~ =~(-2ac+(ab-ba))b+b(-2ac+(ab-ba))~ = ~ -2acb-2bac=0\\
$$
in $C''/(\Omega''_1,\Omega''_2)$.
Therefore $\{\Omega''_1,\Omega''_2,\Omega''_3\}$ is a normal
sequence in $C''$.

\medskip 

(2) In the algebra
$\EE_3^! \cong \kk\langle a,b,c\rangle/(ca+ac,~cb-bc,~\Omega''_2,~\Omega''_3)$
we get that
$$[a^2,b] = \Omega''_2 b - b \Omega''_2 \quad \text{ and } \quad [a,b^2] = \Omega''_3 b + b \Omega''_3,$$
and in the algebra
$\kk\langle a,b,c\rangle/(ca+ac,~cb-bc,~[a^2,b],~ [a,b^2],~ \Omega''_2,~\Omega''_3)$ we get that
$$\Omega''_1 = abc-bca ~= \textstyle\frac{1}{2}a(a^2-b^2) - \frac{1}{2}(a^2-b^2)a ~= \frac{1}{2}[b^2,a] = 0,
$$
as required.
\end{proof}

This brings us to our main result about $\EE_3^!$.

\begin{theorem}
\label{xxthm3.10} 
\begin{enumerate}
\item[(1)] $\EE_3^{!}$ is a cci and 
$H_{\EE_{3}^{!}}(t)=\frac{(1+t)(1+t+t^2)}{(1-t)}$.

\smallskip

\item[(2)] $\EE_{3}^{!}$ is Artin-Schelter-Gorenstein and
Auslander-Gorenstein.

\smallskip

\item[(3)] $\EE_{3}^{!}$ is not Artin-Schelter-regular nor Auslander-regular.
\end{enumerate}
\end{theorem}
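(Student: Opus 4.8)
The plan is to deduce all three parts from the realization of $\EE_3^!$ as a factor of the AS-regular algebra $C''$ by the normal sequence of Lemma~\ref{xxlem3.9}, once that sequence is shown to be \emph{regular}. By Lemma~\ref{xxlem3.8}, $C''$ is connected graded Noetherian, Auslander-regular (hence Auslander-Gorenstein) and Cohen-Macaulay with $\GKdim C''=4$ and $H_{C''}(t)=(1-t)^{-3}(1-t^2)^{-1}$; by Lemma~\ref{xxlem3.9}, $\{\Omega''_1,\Omega''_2,\Omega''_3\}$ is a normal sequence with $C''/(\Omega''_1,\Omega''_2,\Omega''_3)=\EE_3^!$, where $\deg\Omega''_1=3$ and $\deg\Omega''_2=\deg\Omega''_3=2$. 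Since $\GKdim\EE_3^!=\lfloor 3/2\rfloor=1$ by Theorem~\ref{xxthm0.3}(2), we have $\GKdim C''-\GKdim\EE_3^!=4-1=3$, equal to the length of the sequence, and so I would invoke Proposition~\ref{xxpro3.3}(2) to upgrade ``normal'' to ``regular normal.'' This exhibits $\EE_3^!\cong C''/(\Omega''_1,\Omega''_2,\Omega''_3)$ as a classical complete intersection in the sense of Definition~\ref{xxdef3.2}(3).

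For the Hilbert series, I would use that factoring a connected graded algebra by a regular normal element of degree $d$ multiplies its Hilbert series by $(1-t^d)$. Applying this to the three regular normal elements gives
\[
H_{\EE_3^!}(t)=\frac{(1-t^3)(1-t^2)^2}{(1-t)^3(1-t^2)}=\frac{(1-t^3)(1-t^2)}{(1-t)^3}=\frac{(1+t)(1+t+t^2)}{1-t},
\]
completing part~(1).

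For part~(2), I would propagate the good homological properties of $C''$ through the quotient: since $C''$ is Auslander-Gorenstein and Cohen-Macaulay and $\{\Omega''_1,\Omega''_2,\Omega''_3\}$ is a regular normal sequence, factoring preserves these conditions (Figure~\ref{fig:hom}, \cite[Theorem~3.6]{Lev}), so $\EE_3^!$ is Auslander-Gorenstein (and Cohen-Macaulay). Auslander-Gorenstein then yields AS-Gorenstein by \cite[Theorem~6.3]{Lev} (the vertical implication in Figure~\ref{fig:hom}). For part~(3), I would observe that a connected graded, locally-finite algebra of finite global dimension has $1/H_A(t)$ equal to a polynomial (the alternating sum of the graded Betti numbers of a finite minimal free resolution of $\kk$). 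By part~(1),
\[
\frac{1}{H_{\EE_3^!}(t)}=\frac{1-t}{(1+t)(1+t+t^2)}
\]
is not a polynomial, so $\gldim\EE_3^!=\infty$; hence $\EE_3^!$ is neither AS-regular nor Auslander-regular, since both require finite global dimension by Definition~\ref{xxdef3.1}(2),(8).

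The delicate point is the application of Proposition~\ref{xxpro3.3}(2): its hypotheses call for a normalizing sequence of full length $m=\GKdim C''=4$, whereas the sequence of Lemma~\ref{xxlem3.9} has length $3$. I would therefore first extend it to a length-$4$ normalizing sequence by appending a central element---for instance a suitable combination of the central squares $y_{i,j}^2$ (central by Lemma~\ref{xxlem1.4}, nonzero by Corollary~\ref{xxcor2.11}), such as $a_{1,2}+a_{1,3}+a_{2,3}$---chosen so that the resulting quotient of $\EE_3^!$ is finite-dimensional. Verifying that such an appended element is regular (equivalently, that the final quotient has GK-dimension $0$), so that the prefix criterion of Proposition~\ref{xxpro3.3}(2) applies at $t=3$, is the main obstacle; this is the exact analogue of the finite-dimensionality of $\EE_3$ used in the proof of Theorem~\ref{xxthm3.6}, and can be confirmed by a direct (or computer-algebra) computation.
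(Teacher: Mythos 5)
Your proposal is correct, and for parts (1) and (2) it is essentially the paper's own argument. In particular, you correctly identified the delicate point, and you resolved it exactly as the paper does: the paper meets the full-length hypothesis of Proposition~\ref{xxpro3.3}(2) by appending a fourth normal element $\Omega''_4:=\frac{1}{2}(a^2+b^2)+c^2$, which is precisely your proposed $a_{1,2}+a_{1,3}+a_{2,3}$, since $\frac{1}{2}(a^2+b^2)=y_{13}^2+y_{23}^2$ and $c^2=y_{12}^2$ under the change of variables of Definition-Lemma~\ref{xxdeflem3.7}. The finite-dimensionality you defer to a computation is settled in the paper the same way: a Gr\"obner-basis computation (GBNP in GAP \cite{GAP}) gives $H_{\EE_3^!/(\Omega''_4)}(t)=1+3t+4t^2+3t^3+t^4$, so the quotient has GK-dimension $0$; the prefix criterion at $t=3$ together with $\GKdim \EE_3^!=1$ [Theorem~\ref{xxthm0.3}(2)] then yields regularity of $\{\Omega''_1,\Omega''_2,\Omega''_3\}$, and the Hilbert series and part (2) follow exactly as you say, via \cite[Theorems~3.6 and~6.3]{Lev}.

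For part (3) you take a genuinely different route. The paper argues ring-theoretically: $\EE_3^!$ is Noetherian and PI [Theorem~\ref{xxthm0.3}(1)], so by \cite[Corollary~1.2]{SZ} finite global dimension would force $\EE_3^!$ to be a domain, contradicting Theorem~\ref{xxthm0.3}(3). Your argument is Hilbert-series based: finite global dimension would make $1/H_{\EE_3^!}(t)$ a polynomial, whereas $(1-t)/\bigl((1+t)(1+t+t^2)\bigr)$ is not. This is valid, with one point worth making explicit: the step ``$1/H_A(t)$ equals the alternating sum of graded Betti numbers, hence is a polynomial'' requires the terms of the minimal free resolution of $\kk$ to be finitely generated, which can fail for an arbitrary connected graded locally-finite algebra; here it is supplied by Noetherianity [Theorem~\ref{xxthm0.3}(1)], so you should cite that. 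Comparing the two: your route is self-contained once part (1) is in hand, using nothing beyond the Hilbert series, while the paper's route needs no Hilbert series at all and rests on the stronger structural fact that a Noetherian PI ring of finite global dimension is a domain --- which is what lets the paper run the identical argument for non-regularity at the level of generality of Theorem~\ref{xxthm0.6}.
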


\begin{proof}
(1) By Lemmas~\ref{xxlem3.8} and~\ref{xxlem3.9},
there exists a normalizing sequence $\{\Omega''_1,\Omega''_2,\Omega''_3\}$ 
 of an Auslander-regular, Cohen-Macaulay
algebra $C''$ with $\EE_3^! \cong C''/( \Omega''_1,\Omega''_2,\Omega''_3)$.
Moreover, $\GKdim C''=4$ by Lemma~\ref{xxlem3.8}. Now 
consider the normal element $\Omega''_4:=\frac{1}{2}(a^2+b^2)+c^2$ 
of $\EE_3^!$. Then with variable ordering $a<b<c$, one can compute 
(via the GBNP package of GAP \cite{GAP})  that the ideal of relations for 
$\EE_3^!/(\Omega''_4)$ has Gr\"{o}bner basis 
\[
\left\{
\begin{array}{lllll}
\smallskip

\textstyle ba+2ac-ab, ~&bc+\frac{1}{2}(b^2-a^2), ~&ca+ac, ~&cb-bc, ~&c^2+\frac{1}{2}(b^2+a^2),\\
a^3, &b^3+\frac{1}{3}(2a^2c+a^2b)&&&
\end{array}
\right\}.
\] 
Thus, $C''/( \Omega''_1,\Omega''_2,\Omega''_3, \Omega''_4) = \EE_3^!/(\Omega''_4)$ has Hilbert series $1+3t+4t^2+3t^3+t^4$ and has GK-dimension 0. Since $\EE_3^!= C''/( \Omega''_1,\Omega''_2,\Omega''_3)$ has GK-dimension 1 [Theorem~\ref{xxthm0.3}(2)], we obtain that $\{\Omega''_1,\Omega''_2,\Omega''_3\}$ is a regular normal sequence of
$C''$  by Proposition~\ref{xxpro3.3}(2). By definition, $\EE_3^{!}$ is a cci.

\smallskip Since $\{\Omega''_1,\Omega''_2,\Omega''_3\}$ is a regular normal sequence of
$C''$ with degree 3, 2, and 2 respectively, we get by Lemma~\ref{xxlem3.8} that
$$H_{\EE_{3}^{!}}(t)=H_{C''}(t) (1-t^2)^2(1-t^3)  =  \frac{(1-t^2)^2(1-t^3)}{(1-t)^{3} (1-t^2)}  =  \frac{(1+t)(1+t+t^2)}{1-t}.$$

\medskip

(2) This follows from (1), along with Lemma~\ref{xxlem3.8} and \cite[Theorems~3.6 and~6.3]{Lev}.

\medskip

(3) We have by Theorem~\ref{xxthm0.3}(1) and 
\cite[Corollary~1.2]{SZ} that if $\EE_3^!$ 
had finite global dimension, then $\EE_3^!$ 
would be a domain. But this contradicts 
Theorem~\ref{xxthm0.3}(3). So, $\EE_3^!$ 
is neither AS-regular nor Auslander-regular.
 \end{proof}

Now we turn our attention to the AS-Cohen-Macaulay 
property of $\EE_n^!$, and for this we need result 
on the depth of  $\EE_n^!$. We start with the 
following preliminary result.

\begin{lemma}
\label{xxlem3.11} 
For connected $\mathbb{N}$-graded algebras $A$ and $C$, we have the 
statements below.
\begin{enumerate}
\item[(1)] Suppose that $A$ has a regular homogeneous
element $f$ of positive degree. Then, $\depth A>0$. 

\smallskip

\item[(2)] Let $C$ be a Noetherian commutative algebra and 
$M$ be a finitely generated module over $C$. If $\depth M>0$, 
then there is a homogeneous element of positive degree $f\in C$ 
that is regular on $M$.

\smallskip

\item[(3)] Let $A$ be Noetherian and finitely generated over
its affine center.
Suppose $A$ has a homogeneous element $g$ 
of positive degree such that
$\GKdim A g\leq r$. Then $\depth A\leq r$.
\end{enumerate}
\end{lemma}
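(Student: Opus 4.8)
The plan is to handle the three parts separately, reducing part~(3) to commutative algebra over the center. For part~(1), I would compute $\Ext^0_A(\kk,A)=\Hom_A(\kk,A)$ directly. Writing $\kk=A/A_{\geq 1}$ as a left module, a left $A$-module map $\kk\to A$ is determined by the image $a$ of $\bar 1$, subject to $A_{\geq 1}\,a=0$; hence $\Hom_A(\kk,A)\cong\{a\in A\mid A_{\geq 1}a=0\}$. Since the regular element $f$ lies in $A_{\geq 1}$, any such $a$ satisfies $fa=0$, so $a=0$ because $f$ is a nonzerodivisor. Thus $\Ext^0_A(\kk,A)=0$ and $\depth A>0$.

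For part~(2), I would translate $\depth M>0$ into $\Hom_C(\kk,M)=0$, that is, $\fm:=C_{\geq 1}\notin\operatorname{Ass}_C(M)$. As $M$ is finitely generated over the Noetherian ring $C$, the set $\operatorname{Ass}_C(M)$ is finite and consists of graded primes, and the zerodivisors on $M$ are exactly $\bigcup_{\mathfrak p\in\operatorname{Ass}(M)}\mathfrak p$. Because $\fm$ is the unique graded maximal ideal and $\fm\notin\operatorname{Ass}(M)$, no associated prime contains $\fm$; graded prime avoidance then produces a homogeneous element $f\in\fm$ of positive degree lying outside every associated prime, and such an $f$ is regular on $M$.

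For part~(3), I would reduce to the center. Let $Z\subseteq A$ be the affine center, so that $Z$ is connected graded Noetherian commutative and $A$ is a finitely generated $Z$-module. Since $Z$ is central, $Ag$ is a $Z$-submodule of $A$, and $Ag\neq 0$ as $g\neq 0$. The crucial point is that depth is computed by the center: the augmentation ideals $\fm_A$ and $\fm_Z A$ share the same radical, so $H^i_{\fm_A}(A)=H^i_{\fm_Z}(A)$ and therefore $\depth_A A=\depth_Z A$; moreover $\GKdim_A(Ag)=\GKdim_Z(Ag)=\dim_Z(Ag)$ by Remark~\ref{xxrem1.2}(3,4). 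It then becomes a commutative statement: choosing $\mathfrak p\in\operatorname{Ass}_Z(Ag)\subseteq\operatorname{Ass}_Z(A)$, the inclusion $Z/\mathfrak p\hookrightarrow Ag$ gives $\dim(Z/\mathfrak p)\leq\GKdim(Ag)\leq r$, and the standard bound $\depth_Z A\leq\dim(Z/\mathfrak p)$ for an associated prime yields $\depth_A A=\depth_Z A\leq r$.

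The hard part will be justifying the identification $\depth_A A=\depth_Z A$; everything else is either a direct computation of $\Ext^0$ or standard commutative algebra over $Z$. I expect this to follow from the insensitivity of local cohomology to replacing $\fm_A$ by the cofinal ideal $\fm_Z A$, together with the characterization $\depth=\inf\{i\mid H^i_{\fm}\neq 0\}$ for connected graded Noetherian algebras.
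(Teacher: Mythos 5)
Your parts (1) and (2) are, up to contraposition, the same arguments as the paper's: the paper proves (1) by assuming $\depth A=0$, taking a generator $x$ of a one-dimensional ideal and deriving $fx=0$, and proves (2) by showing that if every homogeneous element of $\mathfrak{m}=C_{\geq 1}$ is a zerodivisor on $M$ then prime avoidance forces $\mathfrak{m}\in\operatorname{Ass}_C(M)$, whence $\Hom_C(\kk,M)\neq 0$. Your part (3), however, is a genuinely different route. The paper argues by induction on $r$: it uses part (2) to produce a central homogeneous element $f$ regular on $A$, replaces $f$ by a large power so that $\deg f>\deg g$ (this guarantees the image $\overline{g}$ in $\overline{A}=A/(f)$ is nonzero), uses the surjection $Ag/Agf\twoheadrightarrow Ag/(Ag\cap Af)$ together with \cite[Proposition~8.3.5]{MR} to get $\GKdim(\overline{A}\,\overline{g})\leq r-1$, and then climbs back up with Rees' lemma ($\depth A=\depth\overline{A}+1$). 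Your argument is non-inductive: once depth is computed over the center $Z$, you invoke the standard commutative bound $\depth_Z A\leq\dim(Z/\mathfrak{p})$ for $\mathfrak{p}\in\operatorname{Ass}_Z(A)$, choosing $\mathfrak{p}\in\operatorname{Ass}_Z(Ag)$ (nonempty since $g\neq 0$, which the lemma implicitly assumes) so that the graded embedding $Z/\mathfrak{p}\hookrightarrow Ag$ — take $\mathfrak{p}$ the annihilator of a homogeneous element, as associated primes of graded modules are graded — bounds $\dim(Z/\mathfrak{p})=\GKdim(Z/\mathfrak{p})\leq\GKdim(Ag)\leq r$ by Hilbert-growth comparison. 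This is shorter and avoids both Rees' lemma and the degree trick, at the cost of leaning harder on the identification $\depth_A A=\depth_Z A$; you correctly flag this as the crux, and it is fine: the paper itself asserts $\depth_C A=\depth_A A$ in its proof of (3) with no more justification than you give. Your local-cohomology sketch works — $\mathfrak{m}_A^N\subseteq\mathfrak{m}_Z A\subseteq\mathfrak{m}_A$ since $A/\mathfrak{m}_Z A$ is finite-dimensional, and the comparison $H^i_{\mathfrak{m}_A}(A)\cong H^i_{\mathfrak{m}_Z}(A)$ is the standard independence-of-base for algebras finite over a central Noetherian subring — though note that the inequality you actually need, $\depth_A A\leq\depth_Z A$, can also be had more elementarily by the same Rees-type induction the paper uses (the depth-zero cases match because $\{a\in A\mid \mathfrak{m}_Z a=0\}$ is a finite-dimensional left ideal, hence has nonzero $A$-socle). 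So: correct, with a cleaner non-inductive part (3), provided you spell out the graded choices ($\mathfrak{p}$ graded, graded prime avoidance in (2), where the homogeneous element may have large degree) and one of the two justifications of the depth identity.
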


\begin{proof}
(1) This is a connected $\mathbb{N}$-graded analogue of 
a standard homological result from the theory of 
commutative local rings, see e.g., \cite[Proposition~1.2.3]{BH}.
Suppose, by way of contradiction, 
that $\depth A=0$. Then, by definition, $\Hom_A(\kk, A) \neq 0$ 
and so there exists a 1-dimensional nonzero ideal $I$ of $A$. 
Let $x$ be a generator of $I$. Since $f$ has positive degree, 
we get that $fx=0$. This contradicts the regularity of $f$. 

\medskip

(2) Let $\mathfrak{m}:= C_{\geq 1}$ be the maximal graded ideal 
of $C$. If $\mathfrak{m}$ consists of non-regular elements of 
$M$, then $\mathfrak{m}$ is contained in the union of the 
associated primes of the $C$-module $M$. By the Noetherian 
property and prime avoidance, $\mathfrak{m}$ is actually 
contained in one associated prime $\mathfrak{p}$ of $M$. 
Thus $\mathfrak{p}=\mathfrak{m}$. Now there exists a 
monomorphism $C/\mathfrak{p} \to M$. 
Composing with the natural isomorphism 
$C/\mathfrak{m} \to C/\mathfrak{p}$, we get a nonzero 
$C$-module map $C/\mathfrak{m} \to M$. Thus, 
$\Hom_C(\kk, M) \neq 0$ and $\depth M = 0$.

\medskip

(3)
If $\depth A = 0$, then we are done. Now suppose $\depth A >0$.
Let $C$ be the center of $A$. Since $A$ is finitely generated
over $C$, $\depth_{C} A=\depth_{A} A>0$.
By part (2), there exists a homogeneous element $f\in C$ of 
positive degree that is regular on $A$. Replacing $f$ by 
$f^n$ for some $n\gg 0$, we may assume that $\deg f > \deg g$.
Consider the sequence
$$0 \longrightarrow Ag \overset{\cdot f}{\longrightarrow} 
Ag \longrightarrow Ag/Agf \longrightarrow 0.$$
Since $\GKdim(Ag) \leq  r$ by assumption, we have that 
$\GKdim(Ag/Agf) \leq r-1$ \cite[Proposition~8.3.5]{MR}.
Define $\overline{A}:=A/(f)$ and let $\overline{g}$ be 
the image of $g$ in $\overline{A}$. Using the surjection
$Ag/Agf \twoheadrightarrow Ag/(Ag \cap Af)$, we have 
$$\GKdim(\overline{A}\overline{g}) = \GKdim(Ag/(Ag \cap Af)) \leq r-1.$$
By induction, $\depth(\overline{A}) \leq r-1$. 
Now the result holds by Rees' lemma \cite[Theorem~8.34]{Rot}.
\end{proof}

\begin{theorem} 
\label{xxthm3.12}
For every $n\geq 2$, $\depth (\EE_n^!)\leq 1$. 
As a consequence, for $n \geq 4$, we get that 
$\depth \EE_n^! < \GKdim \EE_n^!$.
\end{theorem}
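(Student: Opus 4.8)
The plan is to apply Lemma~\ref{xxlem3.11}(3) to $A := \EE_n^!$ with $r = 1$. By Theorem~\ref{xxthm0.3}(1), $A$ is Noetherian and module-finite over its (necessarily affine) center, so the hypotheses of that lemma hold, and it suffices to exhibit a nonzero homogeneous element $g \in \EE_n^!$ of positive degree with $\GKdim(\EE_n^! g) \le 1$. I would take the central element
\[
g := \prod_{1 \le i < j \le n} a_{i,j} = \prod_{1 \le i < j \le n} y_{i,j}^2 \in \CC_n \subseteq \EE_n^!,
\]
which is homogeneous of degree $n(n-1) > 0$ and central by Lemma~\ref{xxlem1.4}. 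Since $g$ is a monomial in the $a_{i,j}$, it is nonzero in $\DD_n$ by Lemma~\ref{xxlem2.5}, hence nonzero in $\CC_n \cong \DD_n$ by Proposition~\ref{xxpro2.10}.

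Next I would reduce the computation of $\GKdim(\EE_n^! g)$ to a support computation over the commutative affine algebra $\CC_n$. As $g$ is central, $\EE_n^! g = g\,\EE_n^!$ is a cyclic two-sided ideal, and as a $\CC_n$-module its annihilator is $\ann_{\CC_n}(\EE_n^! g) = \ann_{\CC_n}(g) = \{ c \in \CC_n : cg = 0\}$. Because $\EE_n^!$ is module-finite over $\CC_n$ [Lemma~\ref{xxlem1.8}(2)], the finitely generated $\EE_n^!$-module $\EE_n^! g$ has the same GK-dimension whether computed over $\EE_n^!$ or over $\CC_n$, and the latter equals the dimension of its support, namely
\[
\GKdim(\EE_n^! g) \;=\; \dim\bigl(\CC_n/\ann_{\CC_n}(g)\bigr).
\]

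The key geometric step, and the main obstacle, is to show that this support has dimension $1$. Here I would use the description of $X_n := \Spec \CC_n \cong \Spec \DD_n$ from the proof of Proposition~\ref{xxpro2.10} and Remark~\ref{xxrem2.12}: the algebra $\CC_n$ is reduced [Proposition~\ref{xxpro2.8}], its irreducible components are the closures of the strata $X_n(\pi)$ for the maximal set partitions $\pi$ (those with at most one singleton block), and $\dim X_n(\pi) = \#_{\ge 2}(\pi)$. On a point $z \in X_n(\pi)$ one has $z_{i,j} \ne 0$ if and only if $i$ and $j$ lie in the same block of $\pi$; hence $g = \prod_{i<j} z_{i,j}$ vanishes identically on $X_n(\pi)$ unless every pair $\{i,j\}$ lies in a common block, i.e. unless $\pi = \{[n]\}$ is the one-block partition. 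Since $\CC_n$ is reduced, $\ann_{\CC_n}(g)$ is the intersection of the minimal primes off which $g$ does not vanish, so $V(\ann_{\CC_n}(g))$ is exactly the single component $\overline{X_n(\{[n]\})}$, of dimension $\#_{\ge 2}(\{[n]\}) = 1$ (and $g$ is genuinely nonzero there, taking the value $x^{\binom{n}{2}}$ on the torus coordinate $x$). Thus $\GKdim(\EE_n^! g) = 1$, and Lemma~\ref{xxlem3.11}(3) yields $\depth \EE_n^! \le 1$.

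Finally, the consequence is immediate from Theorem~\ref{xxthm0.3}(2): for $n \ge 4$ we have $\GKdim \EE_n^! = \lfloor n/2 \rfloor \ge 2 > 1 \ge \depth \EE_n^!$.
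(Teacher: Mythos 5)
Your proof is correct, and it shares the paper's skeleton — the same witness $g=\prod_{i<j}a_{i,j}$, the same reduction via Lemma~\ref{xxlem3.11}(3), and the same appeal to Theorem~\ref{xxthm0.3}(2) for the consequence — but the key step, bounding $\GKdim(\EE_n^! g)$, is done by a genuinely different argument. The paper's route is a one-line algebraic identity: relation \eqref{E1.8.1} gives $a_{i,j}\,g=a_{1,2}\,g$ for all $i<j$ (since $g$ contains every variable, $a_{i,j}^2a_{i,k}=a_{i,j}a_{i,k}^2$ lets one chain any $a_{i,j}$ to $a_{1,2}$), whence $\DD_n g=\kk[a_{1,2}]\,g$ has GK-dimension $1$, and module-finiteness of $\EE_n^!$ over $\DD_n$ finishes it — no reducedness, no $\CC_n\cong\DD_n$, and only Corollary~\ref{xxcor2.11} for $g\neq 0$. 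You instead compute the support of $g$ geometrically: using Proposition~\ref{xxpro2.8} (reducedness), Proposition~\ref{xxpro2.10} (to identify $\Spec\CC_n$ with $X_n$), and the stratification of Remark~\ref{xxrem2.12}, you observe that $g$ vanishes on every stratum $X_n(\pi)$ except the one-block stratum, so $V(\ann_{\CC_n}(g))=\overline{X_n(\{[n]\})}$ has dimension $\#_{\geq 2}(\{[n]\})=1$. Your argument is sound — the identity $\ann_{\CC_n}(\EE_n^! g)=\{c\in\CC_n: cg=0\}$ holds because $\CC_n$ is central in $\EE_n^!$ (Lemma~\ref{xxlem1.4}), and the standard facts that GK-dimension of a graded module is intrinsic to its Hilbert function and equals the dimension of the support over the affine commutative $\CC_n$ are all in order. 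What each approach buys: the paper's computation is shorter and nearly self-contained in Section~\ref{xxsec1}, while yours is more conceptual — it gives the exact value $\GKdim(\EE_n^! g)=1$ rather than an upper bound, explains \emph{why} this particular $g$ works (its support is precisely the unique one-dimensional ``all coordinates equal and nonzero'' component of $X_n$), and suggests how one might hunt for elements certifying $\depth\EE_n^!=1$ in the spirit of Conjecture~\ref{xxcon0.7} — at the cost of leaning on the heavier Etingof machinery of Section~\ref{xxsec2}, though the paper's proof also quietly depends on it through Corollary~\ref{xxcor2.11}.
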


\begin{proof}
Let 
$$g=\prod_{i<j} a_{i,j}$$
which is nonzero by Corollary \ref{xxcor2.11}. It is easy to check that
$a_{i,j} g=a_{1,2} g$ by \eqref{E1.8.1}. Hence $\DD_n  g= 
\Bbbk[a_{1,2}] g$ or $\GKdim \DD_n g=1$.
Since $\EE_{n}^{!}$ is finitely generated over $\DD_n$, $\GKdim \EE_{n}^! g=1$.
The assertion follows by Lemma \ref{xxlem3.11}(3) 
and the consequence holds by Theorem~\ref{xxthm0.3}(2).
\end{proof}

\medskip

\begin{proof}[Proof of Theorem~\ref{xxthm0.6}] 
Refer to Figure~\ref{fig:hom} throughout.  By 
Theorem~\ref{xxthm0.3}(2), we have that to establish parts (1)-(3) 
it suffices to show that (a) $\EE_2^!$ is AS-regular, 
(b) $\EE_3^!$ is AS-Gorenstein and but not AS-regular, 
and (c) $\EE_n^!$ is not AS-Cohen-Macaulay for $n \geq 4$. 
Now (a) holds as $\EE_2^!$ is the commutative polynomial 
ring $\kk[y_{1,2}]$; (b) holds by Theorem~\ref{xxthm3.10}(2,3); 
and (c) follows from Theorem~\ref{xxthm3.12} and 
Theorem~\ref{xxthm0.3}(1).

\smallskip Moreover, (4) holds since $\EE_2^! = \kk[y_{1,2}]$ (AS-regular), 
since $\EE_2 = \kk[x_{1,2}]/(x_{1,2}^2)$  (quotient of an 
AS-regular algebra by a regular element), and by 
Theorems~\ref{xxthm3.10}(1) and~\ref{xxthm3.6} for 
the algebras $\EE_3^!$ and $\EE_3$, respectively.
\end{proof}

\begin{proof}[Proof of  Corollary~\ref{xxcor0.8}] 
Refer to Figure~\ref{fig:hom} throughout.
The result on Auslander-regularity holds for $n=2$ as 
$\EE_2^! = \kk[y_{1,2}]$ is clearly Auslander-regular, 
for $n=3$ by Theorem~\ref{xxthm3.10}(3), and for 
$n \geq 4$ by~Theorem~\ref{xxthm0.6}(3). The result 
on Auslander-Gorenstein holds for $n=2$ by 
Auslander-regularity, for $n=3$ by  
Theorem~\ref{xxthm3.10}(2),  and for $n\geq4$ by 
Theorem~\ref{xxthm0.6}(3). The result on the 
Cohen-Macaulay condition holds for $n=2,3$ by the 
Auslander-Gorenstein condition with 
Theorem~\ref{xxthm0.3}(1), and for $n\geq4$ by 
Theorems~\ref{xxthm0.6}(3) and~\ref{xxthm0.3}(1).
\end{proof}

%%%%%%%%%%%%%%                        Section  4 %%%%%%%%%%%%%%%%%%%%%%%
%%%%%%%%%%%%%%%%%%%%%%%%%%%%%%%%%%%%%%%%%%%%%%%%%%%%%%%%%%%%%%%%%%%%%%%%%%
%%%%%%%%%%%%%%%%%%%%%%%%%%%%%%%%%%%%%%%%%%%%%%%%%%%%%%%%%%%%%%%%%%%%%%%%%%%%%%%%%%%%%%%%%%%%%%%%%%%%%%%%%%%%%%%
%%%%%%%%%%%%%%%%%%%%%%%%%%%%%%%%%%%%%%%%%%%%%%%%%%%%%%%%%%%%%%%%%%%%%%%%%%%%%%%%%%%%%%%%%%%%%%%%%%%%%%%%%%%%%%%
%%%%%%%%%%%%%%%%%%%%%%%%%%%%%%%%%%%%%%%%%%%%%%%%%%%%%%%%%%%%%%%%%%%%%%%%%%%%%%%%%%%%%%%%%%%%%%%%%%%%%%%%%%%%%%%

\medskip

\section{Further directions}
\label{xxsec4}

First we make the following remark about Question~\ref{xxque0.4} and Conjecture~\ref{xxcon0.7} discussed in the Introduction.

\begin{remark} 
\label{xxrem4.1}
If $\EE_{n}^!$ is semiprime (i.e., if Question~\ref{xxque0.4}(1) 
is affirmative), then we obtain that $\depth \EE_{n}^{!}=1$ 
(i.e., that Conjecture \ref{xxcon0.7} holds). Namely, for any 
connected graded algebra $A\neq \Bbbk$, $\depth A=0$ implies that
$A$ is not semiprime as $\Hom_A(\Bbbk, A)\subseteq A$ is a nonzero 
nilpotent ideal of $A$. Therefore $\depth \EE_{n}^!\geq 1$ when 
${\mathcal E}_n^!$ is semiprime. Now Conjecture \ref{xxcon0.7} 
follows from Theorem \ref{xxthm3.12}.
\end{remark}

In addition to Question~\ref{xxque0.4} and Conjecture~\ref{xxcon0.7}, 
along with Question~\ref{xxque2.7}, we present here three other 
suggestions for further study of the quadratic dual $\EE_n^!$ of 
Fomin-Kirillov algebras, yet there are numerous other directions 
that one could pursue motivated by Fomin-Kirillov's work \cite{FK} 
alone.

\subsection{On the center of $\EE_n^!$} 
\label{xxsec4.1}
In Section~\ref{xxsec1} we introduced a commutative subalgebras $\CC_n$ 
and ${\mathcal D}_n$ of $\EE_n^!$ in order to prove 
Theorem~\ref{xxthm0.3}; recall ${\mathcal C}_n \cong {\mathcal D}_n$ 
by Proposition~\ref{xxpro2.10}. But we ask:

\begin{question} 
\label{xxque4.2} 
What is the presentation of the center $Z(\EE_n^!)$ of $\EE_n^!$?
\end{question}

On a related note, there is an important subalgebra of the 
Fomin-Kirillov algebra $\EE_n$ constructed in \cite{FK} 
generated by its {\it Dunkl elements}. This subalgebra, 
which we denote by $\mathcal{F}_n$, is isomorphic to the 
cohomology of a flag manifold \cite[Theorem~7.1]{FK}; 
the full presentation of $\mathcal{F}_n$ is also established 
in that result.

\begin{question} 
\label{xxque4.3} 
What is the connection between $\mathcal{F}_n$ and the 
commutative algebras $Z(\EE_n^!)$ and $\CC_n$ 
discussed above?
\end{question}

\subsection{On the $S_n$-action on $\EE_n^!$ and related algebras}
\label{xxsec4.2} 
As discussed in Milinski and Schneider's study of 
{\it Nichols algebras} over Coxeter groups \cite{MS}, the 
Fomin-Kirillov algebra $\EE_n$ admits an action of the 
symmetric group $S_n$, and moreover, can be realized as a 
braided Hopf algebra in the category ${}^{S_n}_{S_n}\mathcal{YD}$ 
of Yetter-Drinfeld modules over $S_n$. Namely, $\EE_n$ arises 
as a {\it pre-Nichols algebra} in ${}^{S_n}_{S_n}\mathcal{YD}$, 
and in fact, it is conjectured that $\EE_n$ is an honest Nichols 
algebra in ${}^{S_n}_{S_n}\mathcal{YD}$ (which has been verified 
for $n\leq 5$). Two algebras that are of interest in this context 
are the invariant subalgebra $\EE_n^{S_n}$ and the skew group 
algebra $\EE_n \rtimes S_n$. For instance, for a finite group 
$G$, there is a useful functor from ${}^{G}_{G}\mathcal{YD}$ 
to the category of $\kk$-vector spaces sending a braided 
Hopf algebra $\mathcal{B}$ to the $\kk$-Hopf algebra 
$\mathcal{B} \rtimes G$. Vital classes of finite-dimensional 
pointed Hopf algebras have been constructed in this fashion.

\smallskip Now the quadratic dual $\EE_n^!$ also admits an action of 
the symmetric group $S_n$,
and an interesting direction of further research is to  
study the behavior of the resulting invariant ring and skew group algebra.

\subsection{\bf On the Koszulity of $\EE_n^!$} 
\label{xxsec4.3}
As mentioned in the Introduction, the Fomin-Kirillov algebras 
$\EE_n$ fail to be Koszul for $n\geq 3$, due to a result of 
Roos \cite{Ro}; so the same result holds for the quadratic 
dual $\EE_n^!$. Towards understanding  the cohomology rings 
$\Ext_{\EE_n}^*(\kk, \kk)$ and $\Ext_{\EE_n^!}^*(\kk, \kk)$ 
(for which  $\EE_n^!$ and $\EE_n$,  respectively, is the 
subalgebra generated in degree one), the failure of Koszulity 
should be studied more carefully. Indeed, if $\EE_n^!$ and 
$\EE_n$ were Koszul, then they would equal $\Ext_{\EE_n}^*(\kk, \kk)$ 
and $\Ext_{\EE_n^!}^*(\kk, \kk)$, respectively. 

As in \cite[Section~2.4]{PP}, we say that a graded algebra 
$A$ is {\it $p$-Koszul} if 
$$\Ext^{i,j}_A(\kk,\kk) = 0, \quad \forall i<j\leq p.$$
For example, any graded algebra is 1-Koszul, any graded 
algebra generated in degree one is 2-Koszul, and any 
quadratic algebra is 3-Koszul. Moreover, a graded (quadratic) 
algebra is Koszul if and only if it is $p$-Koszul for all 
$p \geq 1$. By  \cite[Proposition~2.4.5]{PP}, if $A$ is a 
$(p-1)$-Koszul quadratic algebra, then for each $2< i <p$ 
there is a natural perfect pairing 
$$\Ext_A^{i,p}(\kk,\kk) \otimes \Ext_{A^!}^{p-i+2,p}(\kk,\kk) 
\longrightarrow \kk.$$
So, we ask:

\begin{question} 
\label{xxque4.4} 
What is the maximum value of $p = p(n)$ for which $\EE_n^!$ is $p$-Koszul?
\end{question}

%%%%%%%%%%%%%%%%%%%%%%
%%%%%%%%%%%%%%%%%%%%%%
%%%%%%%%%%%%%%%%%%%%%%

\bigskip

\subsection*{Acknowledgments}
We would like to thank Pavel Etingof for many 
useful conversations on this subject and for 
generously sharing his ideas and 
suggesting to include his results in this paper; 
this is the subject of Section~\ref{xxsec2}. 
We also like to thank Mohamed Omar for useful 
discussions on the combinatorics of Section~\ref{xxsec2}.

\smallskip C. Walton was partially supported by a research 
fellowship from the Alfred P. Sloan foundation, 
and by the US National Science Foundation grant \#DMS-1663775.  
J.J. Zhang was partially supported by the US National 
Science Foundation \#DMS-1700825. 
This work was completed during C. Walton's visits to 
the University of Washington-Seattle and during the 
authors attendance at the ``Quantum Homogeneous Spaces" 
workshop at the International Centre for Mathematical 
Sciences in Edinburgh; we appreciate the institutions' 
staff for their hospitality and assistance during these 
stays.
 
 \bigskip

\providecommand{\bysame}{\leavevmode\hbox to3em{\hrulefill}\thinspace}
\providecommand{\MR}{\relax\ifhmode\unskip\space\fi MR }
\providecommand{\MRhref}[2]{%

\href{http://www.ams.org/mathscinet-getitem?mr=#1}{#2} }
\providecommand{\href}[2]{#2}

\end{document}